\newtheorem{rem}{Remark}
\newtheorem{theo}{Theorem}
\newtheorem{prop}{Proposition}
\newtheorem{cor}{Corollary}
\newtheorem{ex}{Example}
\newtheorem{defi}{Definition}
\newcommand{\bea}{\begin{eqnarray}}
\newcommand{\eea}{\end{eqnarray}}
\newcommand{\nn}{\nonumber}
\def\ga{\alpha}
\def\gd{\delta}
\def\ep{\epsilon}
\def\go{\omega}
\newcommand{\SC}{\mathscr}
\newcommand{\be}{\begin{equation}}
\newcommand{\ee}{\end{equation}}
\newcommand{\w}{\wedge}
\newcommand{\p}{\partial}
\newcommand{\Ci}{C^{\infty}}
\newcommand{\N}{\mathbb{N}}
\newcommand{\Z}{\mathbb{Z}}
\newcommand{\R}{\mathbb{R}}
\newcommand{\K}{\mathbb{K}}
\newcommand{\lp}{\left(}
\newcommand{\rp}{\right)}
\newcommand{\op}[1]{\!\!\mathop{\rm ~#1}\nolimits}
\mathchardef\za="710B  
\mathchardef\zb="710C  
\mathchardef\zg="710D  
\mathchardef\zd="710E  
\mathchardef\zve="710F 
\mathchardef\zz="7110  
\mathchardef\zh="7111  
\mathchardef\zy="7112 
\mathchardef\zi="7113  
\mathchardef\zk="7114  
\mathchardef\zl="7115  
\mathchardef\zm="7116  
\mathchardef\zn="7117  
\mathchardef\zx="7118  
\mathchardef\zp="7119  
\mathchardef\zr="711A  
\mathchardef\zs="711B  
\mathchardef\zt="711C  
\mathchardef\zu="711D  
\mathchardef\zf="711E 
\mathchardef\zq="711F  
\mathchardef\zc="7120  
\mathchardef\zw="7121  
\mathchardef\ze="7122  
\mathchardef\zvy="7123  
\mathchardef\zvw="7124  
\mathchardef\zvr="7125 
\mathchardef\zvs="7126 
\mathchardef\zvf="7127  
\mathchardef\zG="7000  
\mathchardef\zD="7001  
\mathchardef\zY="7002  
\mathchardef\zL="7003  
\mathchardef\zX="7004  
\mathchardef\zP="7005  
\mathchardef\zS="7006  
\mathchardef\zU="7007  
\mathchardef\zF="7008  
\mathchardef\zW="700A  
\newcommand{\h}{\op{Hom}}
\newcommand{\0}{\otimes}
\newcommand{\ac}{\,\text{!`}}
\tikzset{join/.code=\tikzset{after node path={%
\ifx\tikzchainprevious\pgfutil@empty\else(\tikzchainprevious)%
edge[every join]#1(\tikzchaincurrent)\fi}}}
\tikzset{>=stealth',every on chain/.append style={join},
         every join/.style={->}}
\tikzset{
    >=stealth',
    punkt/.style={
           rectangle,
           rounded corners,
           draw=black, very thick,
           text width=6.5em,
           minimum height=2em,
           text centered},
    pil/.style={
           ->,
           thick,
           shorten <=2pt,
           shorten >=2pt,}
}
\begin{document}
\title{{\bf On the infinity category of homotopy Leibniz algebras}}
\date{}
\author{David Khudaverdyan, Norbert Poncin, Jian Qiu\thanks{The research of D. Khudaverdyan and N. Poncin was supported by Grant
GeoAlgPhys 2011-2013 awarded by the University of Luxembourg. J. Qiu is grateful to the Luxembourgian National Research Fund for support via AFR grant PDR 2011-2. The authors thank B. Shoikhet for an informative discussion about composition of homotopies in the DGLA case, as well as J. Nogami and G. Bonavolont\`a for explanations on infinity categories.}
}
\maketitle

\begin{abstract} We discuss various concepts of $\infty$-homotopies, as well as the relations between them (focussing on the Leibniz type). In particular $\infty$-$n$-homotopies appear as the $n$-simplices of the nerve of a complete Lie ${\infty}$-algebra. In the nilpotent case, this nerve is known to be a Kan complex \cite{Get09}. We argue that there is a quasi-category of $\infty$-algebras and show that for truncated $\infty$-algebras, i.e. categorified algebras, this $\infty$-categorical structure projects to a strict 2-categorical one. The paper contains a shortcut to $(\infty,1)$-categories, as well as a review of Getzler's proof of the Kan property. We make the latter concrete by applying it to the 2-term $\infty$-algebra case, thus recovering the concept of homotopy of \cite{BC04}, as well as the corresponding composition rule \cite{SS07}. We also answer a question of \cite{BS07} about composition of $\infty$-homotopies of $\infty$-algebras.
\end{abstract}

\maketitle \vspace{2mm}\noindent {\bf{Keywords}} : Infinity algebra, categorified algebra, higher category, quasi-category, Kan complex, Maurer-Cartan equation, composition of homotopies, Leibniz algebra

\tableofcontents

\section{Introduction}

\subsection{General background}

Homotopy, sh, or infinity algebras \cite{Sta63} are homotopy invariant extensions of differential graded algebras. They are of importance, e.g. in {\small BRST} of closed string field theory, in Deformation Quantization of Poisson manifolds ... Another technique to increase the flexibility of algebraic structures is categorification \cite{CF94}, \cite{Cra95} -- a sharpened viewpoint that leads to astonishing results in {\small TFT}, bosonic string theory ... Both methods, homotopification and categorificiation are tightly related: the 2-categories of 2-term Lie (resp., Leibniz) infinity algebras and of Lie (resp., Leibniz) 2-algebras turned out to be equivalent \cite{BC04}, \cite{SL10} (for a comparison of 3-term Lie infinity algebras and Lie 3-algebras, as well as for the categorical definition of the latter, see \cite{KMP11}). However, homotopies of $\infty$-morphisms and their compositions are far from being fully understood. In \cite{BC04}, $\infty$-homotopies are obtained from categorical homotopies, which are God-given. In \cite{SS07}, (higher) $\infty$-homotopies are (higher) derivation homotopies, a variant of infinitesimal concordances, which seems to be the wrong concept \cite{DP12}. In \cite{BS07}, the author states that $\infty$-homotopies of sh Lie algebra morphisms can be composed, but no proof is given and the result is actually not true in whole generality. The objective of this work is to clarify the concept of (higher) $\infty$-homotopies, as well as the problem of their compositions.

\subsection{Structure and main results}

In Section 2, we provide explicit formulae for Leibniz infinity algebras and their morphisms. Indeed, although a category of homotopy algebras is simplest described as a category of quasi-free {\small DG} coalgebras, its original nature is its manifestation in terms of brackets and component maps.\medskip

We report, in Section 3, on the notions of homotopy that are relevant for our purposes: concordances, i.e. homotopies for morphisms between quasi-free {\small DG} (co)algebras, gauge and Quillen homotopies for Maurer-Cartan ({\small MC} for short) elements of pronilpotent Lie infinity algebras, and $\infty$-homotopies, i.e. gauge or Quillen homotopies for $\infty$-morphisms viewed as {\small MC} elements of a complete convolution Lie infinity algebra.\medskip

Section 4 starts with the observation that vertical composition of $\infty$-homotopies of {\small DG} algebras is well-defined. However, this composition is not associative and cannot be extended to the $\infty$-algebra case -- which suggests that $\infty$-algebras actually form an $\infty$-category. To allow independent reading of the present paper, we provide a short introduction to $\infty$-categories, see Subsection \ref{InftyCatIntro}. In Subsection \ref{InftyCatInftyAlg}, the concept of $\infty$-$n$-homotopy is made precise and the class of $\infty$-algebras is viewed as an $\infty$-category. Since we apply the proof of the Kan property of the nerve of a nilpotent Lie infinity algebra to the 2-term Leibniz infinity case, a good understanding of this proof is indispensable: we detail the latter in Subsection \ref{Kan property}.\medskip

To be complete, we give an explicit description of the category of 2-term Leibniz infinity algebras at the beginning of Section 5. We show that composition of $\infty$-homotopies in the nerve-$\infty$-groupoid, which is defined and associative only up to higher $\infty$-homotopy, projects to a well-defined and associative vertical composition in the 2-term case -- thus obtaining the Leibniz counterpart of the strict 2-category of 2-term Lie infinity algebras \cite{BC04}, see Subsection \ref{KanHomComp}, Theorem \ref{KanHom1} and Theorem \ref{KomComp2}.\medskip

Eventually, we provide, in Section 6, the definitions of the strict 2-category of Leibniz 2-algebras, which is 2-equivalent to the preceding 2-category.\medskip

An $\infty$-category structure on the class of $\infty$-algebras over a quadratic Koszul operad is being investigated independently of \cite{Get09} in a separate paper.


\section{Category of Leibniz infinity algebras}

Let $P$ be a quadratic Koszul operad. Surprisingly enough, $P_{\infty}$-structures on a graded vector space $V$ (over a field $\K$ of characteristic zero), which are essentially sequences $\ell_n$ of $n$-ary brackets on $V$ that satisfy a sequence $R_n$ of defining relations, $n\in\{1,2,\ldots\}$, are 1:1 \cite{GK94} with codifferentials
\be\label{GKCoalg}D\in\op{CoDer}^{1}({\cal F}^{\op{gr,c}}_{P^{\ac}}(s^{-1}V))\quad (|\ell_n|=2-n)\quad \text{
or }\quad D\in\op{CoDer}^{-1}({\cal F}^{\op{gr,c}}_{P^{\ac}}(sV))\quad (|\ell_n|=n-2)\;,\ee or, also, (if $V$ is finite-dimensional) 1:1 with differentials \be\label{GKAlgFin}d\in
\op{Der}^1({\cal F}^{\op{gr}}_{P^{!}}(sV^*))\quad(|\ell_n|=2-n)\quad \text{ or }\quad d\in \op{Der}^{-1}({\cal F}^{\op{gr}}_{P^{!}}(s^{-1}V^*))\quad(|\ell_n|=n-2)\;.\ee Here $\op{Der}^1({\cal F}^{\op{gr}}_{P^{!}}(sV^*))$ (resp., $\op{CoDer}^{1}({\cal F}^{\op{gr,c}}_{P^{\ac}}(s^{-1}V))$), for instance, denotes the space of endomorphisms of the free graded algebra over the Koszul dual operad $P^{!}$ of $P$ on the suspended linear dual $sV^*$ of $V$, which have degree 1 (with respect to the grading of the free algebra that is induced by the grading of $V$) and are derivations for each binary operation in $P^{!}$ (resp., the space of endomorphisms of the free graded coalgebra over the Koszul dual cooperad $P^{\ac}$ on the desuspended space $s^{-1}V$ that are coderivations) (by differential and codifferential we mean of course a derivation or coderivation that squares to 0).\medskip

Although the original nature of homotopified or oidified algebraic objects is their manifestation in terms of brackets \cite{BP12}, the preceding coalgebraic and algebraic settings are the most convenient contexts to think about such higher structures.

\subsection{Zinbiel (co)algebras}

\newcommand{\id}{\op{id}}

Since we take an interest mainly in the case where $P$ is the operad $\text{\sf Lei}$ (resp., the operad $\text{\sf Lie}$) of Leibniz (resp., Lie) algebras, the Koszul dual $P^{!}$ to consider is the operad $\text{\sf Zin}$ (resp., $\text{\sf Com}$) of Zinbiel (resp., commutative) algebras. We now recall the relevant definitions and results.

\begin{defi} A {\em graded Zinbiel algebra} $(${\em\small GZA}$)$ $($resp., {\em graded Zinbiel coalgebra} $($\em{\small GZC}$)$$)$ is a $\mathds{Z}$-graded vector space $V$ endowed with a multiplication, i.e. a degree 0 linear map $m:V\0 V\to V$ $($resp., a comultiplication, i.e. a degree 0 linear map $\zD:V\to V\0 V$$)$ that verifies the relation
\begin{equation}\label{definitionOfGZA}
m(\id\0 m)=m(m\0 \id)+m(m\0 \id)(\zt\0 \id)\quad (\text{resp.,}\; (\id\0 \zD)\zD=(\zD\0\id)\zD+(\zt\0\id)(\zD\0\id)\zD)\;,
\end{equation} where $\zt:V\0 V\ni u\0 v\mapsto (-1)^{|u||v|}v\0 u\in V\0 V$.
\end{defi}

When evaluated on homogeneous vectors $u,v,w\in V$, the Zinbiel relation for the multiplication $m(u,v)=:u\cdot v$ reads, $$ u\cdot(v\cdot w)=(u\cdot v)\cdot w+(-1)^{|u||v|}(v\cdot u)\cdot w\;.$$

\begin{ex}\label{propFreeZinbielProducFormulaOnTV}
The multiplication $\cdot$ on the reduced tensor module $\overline{T}(V):=\oplus_{n\ge 1} V^{\0 n}$ over a $\Z$-graded vector space $V$, defined, for homogeneous $v_i\in V$, by
\begin{equation}\label{FreeZinbielProducFormulaOnTV}
\begin{array}{l}
(v_1...v_p)\cdot(v_{p+1}...v_{p+q})=\sum\limits_{\sigma\in \op{Sh}(p,q-1)}(\sigma^{-1}\otimes \id)(v_1...v_{p+q})=\\
=\sum\limits_{\sigma\in \op{Sh}(p,q-1)}\varepsilon(\sigma^{-1})v_{\sigma^{-1}(1)}v_{\sigma^{-1}(2)}... v_{\sigma^{-1}(p+q-1)}v_{p+q}\;,\\
\end{array}\;
\end{equation}
where we wrote tensor products of vectors by simple juxtaposition, where $\op{Sh}(p,q-1)$ is the set of $(p,q-1)$-shuffles, and where $\ze(\zs^{-1})$ is the Koszul sign, endows $\overline{T}(V)$ with a {\em\small GZA} structure.\medskip

Similarly, the comultiplication $\zD$ on $\overline{T}(V)$, defined, for homogeneous $v_i\in V$, by
\begin{equation}\label{FreeZinbielCoProducFormulaOnTV}
\zD(v_1...v_p)=\sum_{k=1}^{p-1}\sum\limits_{\sigma\in \op{Sh}(k,p-k-1)}\varepsilon(\sigma)\lp v_{\sigma(1)}... v_{\sigma(k)}\rp\bigotimes\lp v_{\sigma(k+1)}... v_{\sigma(p-k-1)}v_{p}\rp\;,
\end{equation} is a {\em\small GZC} structure on $\overline{T}(V)$.
\end{ex}
\noindent As for the {\small GZA} multiplication on $\overline{T}(V),$ we have in particular
$$ v_1\cdot v_2=v_1 v_2\;;\quad (v_1v_2)\cdot v_3=v_1v_2v_3\;;$$ $$\quad v_1\cdot(v_2v_3)=v_1v_2v_3+(-1)^{|v_1||v_2|}v_2v_1v_3;\;\quad
(((v_1\cdot v_2)\cdot v_3)...)\cdot v_k=v_1v_2...v_k\;.$$

\begin{prop}
 The {\em\small GZA} $(\overline{T}(V),\cdot)$ $($resp., the {\em\small GZC} $(\overline{T}(V),\zD)$$)$ defined in Example~\ref{propFreeZinbielProducFormulaOnTV} is the {{\em free} {\small \em GZA} $($resp., {\em free} {\small \em GZC}$)$ over $V$}. We will denote it by $\op{Zin}(V)$ $($resp., $\op{Zin}^{\op{c}}(V)$$)$.
\end{prop}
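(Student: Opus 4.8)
The plan is to verify that $(\overline{T}(V),\cdot)$ satisfies the universal property of the free {\small GZA}: for every {\small GZA} $(A,*)$ and every degree-$0$ linear map $f\colon V\to A$, there must be a unique {\small GZA} morphism $F\colon\overline{T}(V)\to A$ whose restriction to the length-$1$ tensors is $f$. Uniqueness will be immediate from the left-normed identity recalled just before the statement, $(\cdots((v_1\cdot v_2)\cdot v_3)\cdots)\cdot v_p=v_1v_2\cdots v_p$: it shows that $\overline{T}(V)$ is generated as a {\small GZA} by $V$, so $F$ is forced to send $v_1\cdots v_p$ to the left-normed product $\ell^A(f(v_1),\dots,f(v_p)):=(\cdots((f(v_1)* f(v_2))* f(v_3))\cdots)* f(v_p)$. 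Existence then amounts to taking this formula as the definition of $F$ (extended linearly, which is unambiguous since the pure tensors form a basis) and checking that it is multiplicative; the {\small GZA} structure on $\overline{T}(V)$ itself is supplied by Example~\ref{propFreeZinbielProducFormulaOnTV}.

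For multiplicativity I would argue by induction on the total length of the two factors. The case where the second factor lies in $V$ is a one-line check: since $\op{Sh}(p,0)$ is trivial, $(v_1\cdots v_p)\cdot v=v_1\cdots v_p\,v$, and $\ell^A(a_1,\dots,a_m,b)=\ell^A(a_1,\dots,a_m)* b$ gives $F(\xi\cdot v)=F(\xi)* f(v)$ for all $\xi\in\overline{T}(V)$ and all $v\in V$. For a second factor $y=w_1\cdots w_q$ with $q\ge2$, write $y=y'\cdot w_q$ with $y'=w_1\cdots w_{q-1}$ and use the Zinbiel relation in $\overline{T}(V)$ (part of Example~\ref{propFreeZinbielProducFormulaOnTV}) to get $x\cdot y=(x\cdot y')\cdot w_q+(-1)^{|x||y'|}(y'\cdot x)\cdot w_q$; then the first case together with the induction hypothesis applied to the pairs $(x,y')$ and $(y',x)$, both of strictly smaller total length, reduces $F(x\cdot y)$ to $(F(x)* F(y'))* f(w_q)+(-1)^{|x||y'|}(F(y')* F(x))* f(w_q)$, which the Zinbiel relation in $A$ rewrites as $F(x)*\bigl(F(y')* f(w_q)\bigr)=F(x)* F(y)$. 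The {\small GZC} statement I would obtain by dualizing: for a conilpotent {\small GZC} $(C,\zD_C)$ and a degree-$0$ map $g\colon C\to V$, the unique {\small GZC} morphism $G\colon C\to\overline{T}(V)$ lifting $g$ along the projection $\pi\colon\overline{T}(V)\to V$ is $G=\sum_{n\ge1}g^{\otimes n}\circ\zD_C^{(n-1)}$ with $\zD_C^{(n-1)}$ the left-iterated comultiplication; uniqueness follows from the co-version of the left-normed identity and comultiplicativity from the same induction read backwards, using the co-Zinbiel relations in $\overline{T}(V)$ and in $C$.

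The main obstacle I expect is bookkeeping rather than conceptual: correctly matching the $(p,q-1)$-shuffles and the Koszul signs $\ze(\zs^{-1})$ when one isolates the last letter of the second factor (and, dually, the last tensor slot), so that the Zinbiel relations in source and target line up exactly. A second point that must be handled with care on the coalgebra side is well-definedness: the series $\sum_{n\ge1}g^{\otimes n}\zD_C^{(n-1)}$ is finite on each element only because $\zD$ strictly increases the number of tensor factors while preserving total length, so the length filtration is exhaustive and the relevant {\small GZC}s are conilpotent — which is exactly why the free {\small GZC} is to be understood in the conilpotent (complete) setting in which it will later be used.
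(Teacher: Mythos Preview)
The paper states this proposition without proof, treating it as a known fact from the literature; your argument is the standard one and is correct. The key inputs are exactly those the paper provides just before the statement: the left-normed identity $(\cdots((v_1\cdot v_2)\cdot v_3)\cdots)\cdot v_p=v_1v_2\cdots v_p$ forces uniqueness, and the Zinbiel relation lets your induction on total length push multiplicativity of $F$ from the case of a length-$1$ right factor to arbitrary right factors. One small wording slip: pure tensors do not literally form a basis of $V^{\otimes p}$, but your formula for $F$ is multilinear in the $v_i$, which is what makes the linear extension well defined. On the coalgebra side your outline is likewise standard; just be sure to phrase the convergence argument in terms of conilpotency of the source $C$ (so that $\zD_C^{(n-1)}c$ eventually vanishes), rather than in terms of the length grading, which is a feature of $\overline{T}(V)$ only.
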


\begin{defi} A {\em differential graded Zinbiel algebra} $(${\em\small DGZA}$)$ $($resp., a {\em differential graded Zinbiel coalgebra}$)$ $(${\em\small DGZC}$)$ is a {\em\small GZA} $(V,m)$ $($resp., {\em\small GZC} $(V,\zD)$$)$ together with a degree $1$ $($$-1$ in the homological setting$)$ derivation $d$ $($resp., coderivation $D$$)$ that squares to 0. More precisely, $d$ $($resp., $D$$)$ is a degree $1$ $($$-1$ in the homological setting$)$ linear map $d:V\to V$ $($resp., $D:V\to V$$)$, such that $$d\,m=m\lp d\0 \id+\id\0 d\rp\quad (\text{resp.,}\quad \zD\,D=\lp D\0\id +\id\0 D\rp\zD)\;$$ and $d^2=0$ $($resp., $D^2=0$$)$.
\end{defi}

Since the {\small GZA} $\op{Zin}(V)$ (resp., {\small GZC} $\op{Zin}^{\op{c}}(V)$) is free, any degree $1$ linear map $d:V\rightarrow \op{Zin}(V)$ (resp., $D:\op{Zin}^{\op{c}}(V)\to V$) uniquely extends to a derivation ${d}:\op{Zin}(V)\to\op{Zin}(V)$ (resp., coderivation $D:\op{Zin}^{\op{c}}(V)\to\op{Zin}^{\op{c}}(V)$).

\begin{defi} A {\em quasi-free} {\small DGZA} (resp., a {\em quasi-free} {\small DGZC}) over $V$ is a {\small DGZA} (resp., {\small DGZC}) of the type $(\op{Zin}(V),d)$ $($resp., $(\op{Zin}^{\op{c}}(V),D)\,$$)$.\end{defi}

\subsection{Leibniz infinity algebras}

In the present text we use homological ($i$-ary map of degree $i-2$) and cohomological ($i$-ary map of degree $2-i$) infinity algebras. Let us recall the definition of homological Leibniz infinity algebras.

\begin{defi}\label{LeibInftyAlg} A (homological) {\em Leibniz infinity algebra} is a graded vector space $V$ together with a sequence of linear maps $l_i: V^{\otimes i}\to V$ of degree $i-2$, $i\ge 1$, such that for any $n\ge 1$, the following {\em higher Jacobi identity} holds:
\begin{equation}\label{LodayInfinityAlgebraIdenities}
\begin{array}{l}
\displaystyle\sum\limits_{i+j=n+1}\sum\limits_{\substack{j\leqslant k\leqslant n}}\sum\limits_{\sigma\in \op{Sh}(k-j,j-1)}(-1)^{(n-k+1)(j-1)}\,(-1)^{j(v_{\sigma(1)}+...+v_{\sigma(k-j)})}\,\varepsilon(\sigma)\, \mathrm{sign}(\sigma) \\[4ex]\quad\quad
{l_i(v_{\sigma(1)},...,v_{\sigma(k-j)},l_j(v_{\sigma(k-j+1)},...,v_{\sigma(k-1)},v_k),v_{k+1},...,v_{n})}=0\;,
\end{array}
\end{equation} where $\op{sign}{\zs}$ is the signature of $\zs$ and where we denoted the degree of the homogeneous $v_i\in V$ by $v_i$ instead of $|v_i|$.
\end{defi}

\begin{theo}\label{LeibInftyAlg1:1} There is a 1:1 correspondence between Leibniz infinity algebras, in the sense of Definition \ref{LeibInftyAlg}, over a graded vector space $V$ and quasi-free {\em\small DGZC}-s $(\op{Zin}^{\op{c}}(sV),D)$ $($resp., in the case of a finite-dimensional graded vector space $V$, quasi-free {\small\em DGZA}-s $(\op{Zin}(s^{-1}V^*),d)$$)$.
\end{theo}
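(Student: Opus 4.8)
The plan is to establish the correspondence by unwinding the universal property of the cofree Zinbiel coalgebra and matching the coderivation condition $D^2=0$ with the higher Jacobi identities \eqref{LodayInfinityAlgebraIdenities}. The key structural fact is that, since $\op{Zin}^{\op c}(sV)$ is cofree (as a graded Zinbiel coalgebra on $sV$), a coderivation $D$ of degree $-1$ is uniquely determined by its corestriction $\pi\circ D$ along the canonical projection $\pi:\op{Zin}^{\op c}(sV)\to sV$, and conversely any degree $-1$ linear map $\op{Zin}^{\op c}(sV)\to sV$ extends uniquely to a coderivation. Decomposing this corestriction according to the word-length filtration $\op{Zin}^{\op c}(sV)=\bigoplus_{k\ge 1}(sV)^{\otimes k}$ gives a sequence of maps $D_k:(sV)^{\otimes k}\to sV$ of degree $-1$, and after (de)suspending one sets $l_k:=s^{-1}\circ D_k\circ s^{\otimes k}:V^{\otimes k}\to V$, which picks up the Koszul-type sign shift so that $l_k$ has degree $k-2$. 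This gives the desired bijection at the level of underlying data; the content of the theorem is that $D^2=0$ translates exactly into \eqref{LodayInfinityAlgebraIdenities}.

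**Next I would** carry out the sign bookkeeping. The square $D^2$ is again a coderivation, hence determined by $\pi\circ D^2:\op{Zin}^{\op c}(sV)\to sV$; one computes this corestriction explicitly using the cofree comultiplication formula \eqref{FreeZinbielCoProducFormulaOnTV}, which produces precisely the shuffle sums $\op{Sh}(k-j,j-1)$ appearing in the higher Jacobi identity, together with Koszul signs $\varepsilon(\sigma)$ coming from permuting the (odd-shifted) generators past each other. The restriction of $\pi\circ D^2$ to $(sV)^{\otimes n}$ is a sum of terms $D_i\circ(\id^{\otimes(k-j)}\otimes D_j\otimes\id^{\otimes(n-k)})$ over $i+j=n+1$ and over the cuts dictated by $\Delta$; translating back through the (de)suspension isomorphisms converts the degree shift $|l_i|=i-2$ into the sign factors $(-1)^{(n-k+1)(j-1)}$ and $(-1)^{j(v_{\sigma(1)}+\cdots+v_{\sigma(k-j)})}$, and the asymmetry $\varepsilon$ versus $\varepsilon\cdot\op{sign}$ records the difference between the Zinbiel (non-symmetric) shuffles and the Koszul sign convention. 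Hence $D^2=0 \iff \pi\circ D^2=0 \iff$ \eqref{LodayInfinityAlgebraIdenities} holds for all $n$.

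**For the finite-dimensional statement**, I would note that when $\dim V<\infty$ the desuspended linear dual $s^{-1}V^*$ is again finite-dimensional, and graded linear duality is an (anti)equivalence exchanging the free Zinbiel algebra $\op{Zin}(s^{-1}V^*)$ with the cofree Zinbiel coalgebra $\op{Zin}^{\op c}(sV)$ — this is where the Koszul duality $\text{\sf Lei}^{!}=\text{\sf Zin}$ and the finite-dimensionality hypothesis are used, exactly as in \eqref{GKCoalg}--\eqref{GKAlgFin}. Under this duality a coderivation $D$ of degree $-1$ transposes to a derivation $d$ of degree $+1$, and $D^2=0 \iff d^2=0$, so the second bijection follows formally from the first.

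**The main obstacle** is entirely the sign computation: verifying that the Koszul signs generated by the cofree Zinbiel comultiplication, combined with the suspension/desuspension isomorphisms $s,s^{-1}$ applied slotwise to $V^{\otimes k}$, reproduce \emph{verbatim} the signs $(-1)^{(n-k+1)(j-1)}(-1)^{j(v_{\sigma(1)}+\cdots+v_{\sigma(k-j)})}\varepsilon(\sigma)\op{sign}(\sigma)$ displayed in Definition~\ref{LeibInftyAlg}. Everything else — cofreeness, the coderivation-corestriction dictionary, and the duality argument in the finite-dimensional case — is formal and follows the now-standard pattern (going back to \cite{GK94}) for $P_\infty$-structures over a quadratic Koszul operad; I would present those parts briefly and devote the bulk of the proof to pinning down the sign conventions.
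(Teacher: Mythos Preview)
Your proposal is correct and follows the standard pattern. However, note that the paper does \emph{not} actually give a proof of this theorem: immediately after the statement it says ``Theorem \ref{LeibInftyAlg1:1} makes this correspondence concrete in the case $P={\sf Lei}$; a proof can be found in \cite{AP10}.'' The paper's Appendix does carry out the analogous (and more involved) computation for the companion Theorem \ref{LeibInftyAlgMorph1:1} on infinity morphisms, using exactly the toolkit you describe --- suspension/desuspension identities such as $s^{\otimes n}(s^{-1})^{\otimes n}=(-1)^{n(n-1)/2}\id^{\otimes n}$, the transposition rules, and the shuffle formula for the Zinbiel (co)product --- so your approach is entirely in line with how the authors handle the parallel statement, and presumably with \cite{AP10} as well.
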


In the abovementioned 1:1 correspondence between infinity algebras over a quadratic Koszul operad $P$ and quasi-free {\small DG$P^{\ac}$C} (resp., quasi-free {\small DG$P^!$A}) (self-explaining notation), a $P_{\infty}$-algebra structure on a graded vector space $V$ is viewed as a representation on $V$ of the {\small DG} operad $P_{\infty}$ -- which is defined as the cobar construction $\zW P^{\ac}$ of the Koszul dual cooperad $P^{\ac}$. Theorem \ref{LeibInftyAlg1:1} makes this correspondence concrete in the case $P={\sf Lei}$; a proof can be found in \cite{AP10}.

\subsection{Leibniz infinity morphisms}

\begin{defi}\label{LeibInftyAlgMorph}
A {\em morphism between Leibniz infinity algebras} $(V,l_i)$ and $(W,m_i)$ is a sequence of linear maps $\varphi_i:V^{\otimes i}\to W$ of degree $i-1$, $i\ge 1$, which satisfy, for any $n\ge 1,$ the condition

\begin{equation}\label{LodayInfinityAlgebraMorphismIdenities}
\begin{array}{l}
\sum\limits_{i=1}^n\hspace{1mm}\sum\limits_{\substack{k_1+...+k_i=n}}\hspace{1mm}\sum\limits_{\sigma\in \op{\frak{Sh}}(k_1,...,k_i)}(-1)^{\sum\limits_{r=1}^{i-1}(i-r)k_r+\frac{i(i-1)}{2}}\,(-1)^{
\sum\limits_{r=2}^i(k_r-1)(v_{\sigma(1)}+...+v_{\sigma(k_1+...+k_{r-1})})}\,\varepsilon(\sigma)\,\mathrm{sign}(\sigma)\\[0.5cm]
\;m_i\left(\varphi_{k_1}(v_{\sigma(1)},...,v_{\sigma(k_1)}),\varphi_{k_2}(v_{\sigma(k_1+1)},...,v_{\sigma(k_1+k_2)}),...,\varphi_{k_i}(v_{\sigma(k_1+...+k_{i-1}+1)},...,v_{\sigma(k_1+...+k_i)})\right)\\[0,5cm]
=\\\quad\quad\quad\sum\limits_{i+j=n+1}\hspace{1mm}\sum\limits_{j\leqslant k \leqslant n}\sum\limits_{\sigma\in \op{Sh}(k-j,j-1)}(-1)^{k+(n-k+1)j}\,(-1)^{j(v_{\sigma(1)}+...+v_{\sigma(k-j)})}\,\varepsilon(\sigma)\,\mathrm{sign}(\sigma)\\[0.5cm]
\quad\quad\quad\quad\quad\quad{\varphi_i(v_{\sigma(1)},...,v_{\sigma(k-j)},l_j(v_{\sigma(k-j+1)},...,v_{\sigma(k-1)},v_k),v_{k+1},...,v_{n})}\;,
\end{array}
\end{equation} where $\op{\frak S\frak h}(k_1,\ldots,k_i)$ denotes the set of shuffles $\zs\in\op{Sh}(k_1,\ldots,k_i)$, such that $\zs(k_1)<\zs(k_1+k_2)<\ldots<\zs(k_1+k_2+\ldots+k_i)$.
\end{defi}

\begin{theo}\label{LeibInftyAlgMorph1:1}
There is a 1:1 correspondence between Leibniz infinity algebra morphisms from $(V,l_i)$ to $(W,m_i)$ and {\small \em DGC} morphisms $\mathrm{Zin}^{\op{c}}(sV)\to \mathrm{Zin}^{\op{c}}(sW)$ $($resp., in the finite-dimensional case, {\em\small DGA} morphisms $\mathrm{Zin}(s^{-1}W^*)\to \mathrm{Zin}(s^{-1}V^*)$$)$, where the quasi-free {\small\em DGZC}-s $($resp., the quasi-free {\small\em DGZA}-s$)$ are endowed with the codifferentials $($resp., differentials$)$ that encode the structure maps $l_i$ and $m_i$.
\end{theo}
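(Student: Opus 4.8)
The plan is to dualize. By Theorem~\ref{LeibInftyAlg1:1} the structure maps $l_i$ (resp.\ $m_i$) on $V$ (resp.\ $W$) correspond bijectively to a coderivation $D_V$ (resp.\ $D_W$) of square zero on the cofree graded Zinbiel coalgebra $\op{Zin}^{\op{c}}(sV)$ (resp.\ $\op{Zin}^{\op{c}}(sW)$). Given a sequence $\varphi_i:V^{\otimes i}\to W$ of degree $i-1$, consider the induced family of degree-$0$ maps $\op{Zin}^{\op{c}}(sV)\to sW$ obtained by suspending: set $F_i=s\circ\varphi_i\circ (s^{-1})^{\otimes i}$ so that $|F_i|=0$, and restrict $F_i$ to the component $(sV)^{\otimes i}\subset\op{Zin}^{\op{c}}(sV)$, extending by zero elsewhere. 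First I would invoke the cofreeness of $\op{Zin}^{\op{c}}(sW)$: a coalgebra morphism $F:\op{Zin}^{\op{c}}(sV)\to\op{Zin}^{\op{c}}(sW)$ is uniquely determined by its corestriction $\op{pr}_{sW}\circ F:\op{Zin}^{\op{c}}(sV)\to sW$, and conversely any degree-$0$ linear map $\op{Zin}^{\op{c}}(sV)\to sW$ extends uniquely to a morphism of graded Zinbiel coalgebras. (This is the coalgebra analogue of the statement in the excerpt that any $D:\op{Zin}^{\op{c}}(V)\to V$ extends uniquely to a coderivation; here the target of the extension is a coalgebra morphism rather than a coderivation, but the universal property is the same one used to construct $\op{Zin}^{\op{c}}$ as the cofree object.) Thus the data $(\varphi_i)_{i\ge1}$ is already in bijection, via $(F_i)_{i\ge1}\leftrightarrow\sum_i F_i$, with graded coalgebra morphisms $F:\op{Zin}^{\op{c}}(sV)\to\op{Zin}^{\op{c}}(sW)$, with no conditions imposed.

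The content of the theorem is then that the higher-morphism identities \eqref{LodayInfinityAlgebraMorphismIdenities} are equivalent to the single chain-map equation $F\circ D_V=D_W\circ F$, i.e.\ that $F$ is a morphism of {\small DGC}-s. The key step is to compute both sides of this equation by corestricting along $\op{pr}_{sW}$, which is legitimate because $\op{Zin}^{\op{c}}(sW)$ is cofree: two coalgebra morphisms (or, more precisely here, two coalgebra-morphism-compatible maps) into a cofree coalgebra agree iff their corestrictions agree, and both $F D_V$ and $D_W F$ intertwine the comultiplications appropriately once one knows $F$, $D_V$, $D_W$ do. So I would evaluate $\op{pr}_{sW}\circ F\circ D_V$ and $\op{pr}_{sW}\circ D_W\circ F$ on a general element $sv_1\cdots sv_n\in(sV)^{\otimes n}$. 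The left-hand side, using that $D_V$ restricted to $(sV)^{\otimes n}$ is built from the $l_j$ via the cofree Zinbiel corepresentation and that $\op{pr}_{sW}\circ F=\sum_i F_i$ kills everything but tensor powers, reproduces — after carrying the suspension isomorphisms through and collecting Koszul signs — exactly the right-hand side of \eqref{LodayInfinityAlgebraMorphismIdenities} (the ``$\varphi_i\circ l_j$'' terms); the right-hand side $\op{pr}_{sW}\circ D_W\circ F$ reproduces the left-hand side of \eqref{LodayInfinityAlgebraMorphismIdenities} (the ``$m_i(\varphi_{k_1},\ldots,\varphi_{k_i})$'' terms), where the shuffle set $\op{\frak{Sh}}(k_1,\ldots,k_i)$ with the constraint $\sigma(k_1)<\sigma(k_1+k_2)<\cdots$ appears precisely because the Zinbiel coproduct \eqref{FreeZinbielCoProducFormulaOnTV} is not cocommutative and singles out the last tensor slot of each block. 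Matching these two computations, degree by degree in $n$, gives the equivalence.

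The main obstacle is bookkeeping of signs: one must verify that the Koszul signs $\varepsilon(\sigma)$, the permutation signatures $\mathrm{sign}(\sigma)$, the combinatorial prefactors $(-1)^{\sum_{r=1}^{i-1}(i-r)k_r+i(i-1)/2}$ and $(-1)^{k+(n-k+1)j}$, and the degree shifts coming from the $i-1$ suspensions in $F_i$ and the $j-2$ degree of $l_j$ all conspire to give precisely \eqref{LodayInfinityAlgebraMorphismIdenities} and not some sign variant. This is the standard — but genuinely delicate — ``décalage'' computation; since a careful treatment for the operad {\sf Lei} along exactly these lines is given in \cite{AP10}, I would carry out the corestriction argument explicitly and refer to \cite{AP10} for the verification that the signs match. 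The finite-dimensional {\small DGZA} statement is then obtained by linear dualization: $\op{Zin}^{\op{c}}(sV)^*\cong\op{Zin}(sV^*)$ for $V$ finite-dimensional, so coalgebra morphisms $\op{Zin}^{\op{c}}(sV)\to\op{Zin}^{\op{c}}(sW)$ dualize to algebra morphisms $\op{Zin}(s^{-1}W^*)\to\op{Zin}(s^{-1}V^*)$ intertwining the dual differentials, with the direction of the arrow reversed as stated.
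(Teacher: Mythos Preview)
Your approach is correct and is essentially the dual of the paper's own argument (given in the appendix). Where you work directly on the coalgebra side---using cofreeness of $\op{Zin}^{\op{c}}(sW)$ to reduce $FD_V=D_WF$ to its corestriction and then carrying out the d\'ecalage---the paper starts on the \emph{algebra} side: it uses freeness of $\op{Zin}(U)$ to write an algebra morphism $f$ explicitly on generators via the Zinbiel product formula~\eqref{FreeZinbielProducFormulaOnTV}, expands $(fd-df)|_U$ as a sum over half-shuffles, transposes to obtain the coalgebra identity, and only then inserts $s,s^{-1}$ to extract the signs in~\eqref{LodayInfinityAlgebraMorphismIdenities}. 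Both routes reduce to the same sign computation; yours is more direct for the primary coalgebraic statement, the paper's makes the half-shuffle combinatorics (and hence the origin of the set $\op{\frak{Sh}}(k_1,\ldots,k_i)$) more visibly explicit.

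One point in your argument should be sharpened. You say that $FD_V$ and $D_WF$ ``intertwine the comultiplications appropriately'' and call them ``coalgebra-morphism-compatible maps''; what you actually need is that both are \emph{$F$-coderivations}, i.e.\ degree~$-1$ maps $g$ satisfying $\zD_W\, g=(F\otimes g+g\otimes F)\,\zD_V$. This follows immediately from $F$ being a coalgebra morphism and $D_V,D_W$ coderivations, and it is precisely this property---not the coalgebra-morphism property, which neither $FD_V$ nor $D_WF$ enjoys---that ensures such a map is determined by its corestriction $\op{pr}_{sW}\circ g$. Without naming this, the step ``corestrict both sides'' is not justified. Also, your appeal to \cite{AP10} for the sign check is slightly off-target: that reference treats the structure side (Theorem~\ref{LeibInftyAlg1:1}); the paper remarks that explicit morphism formulae for the Leibniz case did not previously appear in the literature and performs the full sign computation itself.
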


In literature, infinity morphisms of $P_{\infty}$-algebras are usually defined as morphisms of quasi-free {\small DG$P^{\ac}$C}-s. However, no explicit formulae seem to exist for the Leibniz case. A proof of Theorem \ref{LeibInftyAlgMorph1:1} can be found in the first author's thesis. Let us also stress that the concept of infinity morphism of $P_{\infty}$-algebras does not coincide with the notion of morphism of algebras over the operad $P_{\infty}$.

\subsection{Composition of Leibniz infinity morphisms}\label{LeibInftyMorphComp}

Composition of infinity morphisms between $P_{\infty}$-algebras corresponds to composition of the corresponding morphisms between quasi-free {\small DG$P^{\ac}$C}-s: the categories {\tt $P_{\infty}$-Alg} and {\tt qfDG$P^{\ac}$CoAlg} (self-explaining notation) are isomorphic. Explicit formulae can easily be computed.

\section{Leibniz infinity homotopies}

\subsection{Concordances and their compositions}

Let us first look for a proper concept of homotopy in the category {\tt qfDG$P^{\ac}$CoAlg}, or, dually, in {\tt qfDG$P^{\,!}$Alg}.

\subsubsection{Definition and characterization}

The following concept of homotopy -- referred to as concordance -- first appeared in an unpublished work by Stasheff and Schlessinger, which was based on ideas of Bousfield and Gugenheim. It can also be found in \cite{SSS07}, for homotopy algebras over the operad {\sf Lie} (algebraic version), as well as in \cite{DP12}, for homotopy algebras over an arbitrary operad $P$ (coalgebraic version).\medskip

It is well-known that a $\Ci$-homotopy $\eta:I\times X\rightarrow Y$, $I=[0,1]$, connecting two smooth maps $p,q$ between two smooth manifolds $X,Y$, induces a cochain homotopy between the pullbacks $p^*,q^*.$ Indeed, in the algebraic category, $$
\eta^*:\Omega(Y)\rightarrow \Omega(I)\0\Omega(X)\;,
$$
and $\eta^*(\zw)$, $\zw\in\zW(Y),$ reads \be\label{ConDecomp}\eta^*(\zw)(t)=\varphi(\zw)(t)+dt\,\rho(\zw)(t)\;.\ee It is easily checked (see below for a similar computation) that, since $\eta^*$ is a cochain map, we have
$$
{d_t\varphi}=d_X\rho(t)+\rho(t) d_Y\;,
$$
where $d_X,d_Y$ are the de Rham differentials. When integrating over $I$, we thus obtain
$$
q^*-p^*=d_Xh+hd_Y\;,
$$
where $\displaystyle h=\smallint_I \rho(t) dt$ has degree $-1$.

\newcommand{\fg}{\frak g}
\newcommand{\fh}{\frak h}

Before developing a similar approach to homotopies between morphisms of quasi-free {\small DGZA}-s, let us recall that tensoring an `algebra' (resp., `coalgebra') with a {\small DGCA} (resp., {\small DGCC}) does not change the considered type of algebra (resp., coalgebra); let us also introduce the `evaluation' maps $$\ze_1^i:\zW(I)=\Ci(I)\oplus dt\,\Ci(I)\ni f(t)+dt\,g(t)\mapsto f(i)\in\K,\quad i\in\{0,1\}\;.$$

In the following -- in contrast with our above notation -- we omit stars. Moreover -- although the `algebraic' counterpart of a Leibniz infinity algebra over $V$ is ($\op{Zin}(s^{-1}V^*),d_V)$ -- we consider Zinbiel algebras of the type $(\op{Zin}(V),d_V)$.

\newcommand{\ceg}{\op{CE}({\frak g})}
\newcommand{\ceh}{\op{CE}({\frak h})}

\begin{defi}If $p,q:\op{Zin}(W)\to \op{Zin}(V)$ are two {\em\small DGA} morphisms, a {\em homotopy} or {\em concordance} $\zh:p\Rightarrow q$ from $p$ to $q$ is a {\em\small DGA} morphism
$\zh:\op{Zin}(W)\to \zW(I)\0\op{Zin}(V)$, such that $$\ze_1^0\zh=p\quad\text{and}\quad \ze_1^1\zh=q\;.$$\end{defi}

The following proposition is basic.

\newcommand{\beas}{\begin{eqnarray*}}
\newcommand{\eeas}{\end{eqnarray*}}

\begin{prop}\label{CharConcord} Concordances
$$\zh:\op{Zin}(W)\to\zW(I)\0\op{Zin}(V)$$ between {\em\small DGA} morphisms $p$, $q$ can be identified with 1-parameter families
$$\zvf:I\to\op{Hom}_{\op{DGA}}(\op{Zin}(W),\op{Zin}(V))\;$$and $$\zr:I\to \zvf\!\op{Der}(\op{Zin}(W),\op{Zin}(V))\;$$of (degree 0) {\em\small DGA} morphisms and of degree $1$ $\zvf$-Leibniz
morphisms, respectively, such that
{\be\label{DECon}d_t\zvf=[d,\zr(t)]\;\ee}and $\zvf(0)=p$, $\zvf(1)=q$. The {\em\small RHS} of the differential equation (\ref{DECon}) is defined by $$[d,\zr(t)]:=d_V\zr(t)+\zr(t)d_W\;,$$ where $d_V,d_W$ are the differentials of the quasi-free {\em\small DGZA}-s $\op{Zin}(V),\op{Zin}(W)$. \end{prop}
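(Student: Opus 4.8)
The plan is to unwind the definition of a concordance $\zh:\op{Zin}(W)\to\zW(I)\0\op{Zin}(V)$ by decomposing its value on $w\in\op{Zin}(W)$ along the canonical splitting $\zW(I)\0\op{Zin}(V)=\big(\Ci(I)\0\op{Zin}(V)\big)\oplus\big(dt\,\Ci(I)\0\op{Zin}(V)\big)$, exactly as in the motivating de Rham computation (\ref{ConDecomp}): write $\zh(w)(t)=\zvf(w)(t)+dt\,\zr(w)(t)$, thereby defining, for each $t\in I$, linear maps $\zvf(t),\zr(t):\op{Zin}(W)\to\op{Zin}(V)$ of degree $0$ and $1$ respectively. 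The boundary conditions $\ze_1^0\zh=p$, $\ze_1^1\zh=q$ translate immediately into $\zvf(0)=p$, $\zvf(1)=q$ (the $\ze_1^i$ kill the $dt$-part). It then remains to show that the two structural requirements on $\zh$ — that it be an algebra morphism for the Zinbiel product and that it commute with the differentials — are equivalent, after this decomposition, to: (i) each $\zvf(t)$ is a {\small DGA} morphism; (ii) each $\zr(t)$ is a $\zvf(t)$-Leibniz (i.e. $\zvf(t)$-derivation) map; and (iii) the differential equation $d_t\zvf=[d,\zr(t)]$.

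First I would treat the multiplicativity condition. Since the product on $\zW(I)\0\op{Zin}(V)$ is the graded tensor product of the Zinbiel product on $\op{Zin}(V)$ with the (graded-commutative) product on $\zW(I)$, and since $dt\w dt=0$, expanding $\zh(w_1\cdot w_2)=\zh(w_1)\cdot\zh(w_2)$ and collecting the $dt$-free and $dt$-linear parts gives two equations. The $dt$-free part says $\zvf(t)(w_1\cdot w_2)=\zvf(t)(w_1)\cdot\zvf(t)(w_2)$, i.e. $\zvf(t)$ is an algebra morphism for every fixed $t$. The $dt$-linear part, using the Koszul sign rule to move $dt$ (a degree-$1$ element) past $\zvf(t)(w_1)$, yields precisely the $\zvf(t)$-Leibniz rule for $\zr(t)$, namely $\zr(t)(w_1\cdot w_2)=\zr(t)(w_1)\cdot\zvf(t)(w_2)+(-1)^{|w_1|}\zvf(t)(w_1)\cdot\zr(t)(w_2)$ (with the convention for $\zvf$-derivations fixed accordingly). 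Conversely, (i) and (ii) for all $t$ reassemble into multiplicativity of $\zh$.

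Next I would treat compatibility with the differentials. The differential on $\zW(I)\0\op{Zin}(V)$ is $d_{dR}\0\id+\id\0 d_V$, where $d_{dR}(f(t)+dt\,g(t))=dt\,f'(t)$. Writing out $\zh\, d_W=(d_{dR}\0\id+\id\0 d_V)\,\zh$ on $w$, the left side is $\zvf(t)(d_W w)+dt\,\zr(t)(d_W w)$; the right side is $\big(d_V\zvf(t)(w)\big)+dt\,\big(\p_t\zvf(t)(w)-d_V\zr(t)(w)\big)$ (the sign on the last term coming from commuting $d_V$ past $dt$). Matching the $dt$-free parts gives $\zvf(t)\,d_W=d_V\,\zvf(t)$, which is the remaining half of "$\zvf(t)$ is a {\small DGA} morphism". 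Matching the $dt$-linear parts gives $\zr(t)\,d_W=\p_t\zvf(t)-d_V\,\zr(t)$, i.e. $d_t\zvf=d_V\zr(t)+\zr(t)d_W=[d,\zr(t)]$, which is (\ref{DECon}). Again the argument is reversible, so a family $(\zvf(t),\zr(t))$ satisfying (i)–(iii) with the stated endpoint values reassembles into a concordance, giving the claimed bijection.

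The only genuinely delicate point — and the one I would write out with care rather than routinely — is bookkeeping of Koszul signs when commuting the odd element $dt$ past elements of $\op{Zin}(V)$ and past the maps $\zvf(t)$, $\zr(t)$, since this is what fixes the precise sign conventions in the $\zvf$-Leibniz rule and in the bracket $[d,\zr(t)]=d_V\zr(t)+\zr(t)d_W$; one must also check that $\zr(t)$ lands in $\zvf(t)\!\op{Der}$ (not some twisted version) uniformly in $t$, and that smoothness in $t$ is preserved throughout. Everything else is a direct term-by-term comparison along the splitting of $\zW(I)\0\op{Zin}(V)$, entirely parallel to the classical de Rham homotopy formula recalled before the proposition.
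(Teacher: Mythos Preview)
Your proposal is correct and follows essentially the same route as the paper's own proof: decompose $\zh(w)(t)=\zvf(w)(t)+dt\,\zr(w)(t)$, then split both the algebra-morphism condition and the chain-map condition into their $dt$-free and $dt$-linear parts to obtain exactly the equations $\zvf(t)\in\op{Hom}_{\op{DGA}}$, the $\zvf$-Leibniz rule for $\zr(t)$, and $d_t\zvf=d_V\zr(t)+\zr(t)d_W$, with the boundary values coming from the evaluations $\ze_1^i$. The sign bookkeeping you flag (commuting $d_V$ past $dt$ and $dt$ past $\zvf(w)$) is precisely what the paper does, and your formula for the $\zvf$-derivation matches the paper's convention.
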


The notion of $\zvf$-derivation or $\zvf$-Leibniz morphism appeared for instance in \cite{BKS04}: for $w,w'\in\op{Zin}(W)$, $w$ homogeneous, $$\zr(w\cdot w')=\zr(w)\cdot\zvf(w')+(-1)^{w}\zvf(w)\cdot\zr(w')\;,$$ where we omitted the dependence of $\zr$ on $t$.

\begin{proof} As already mentioned in Equation (\ref{ConDecomp}), $\zh(w)$, $w\in\op{Zin}(W)$, reads $$\zh(w)(t)=\zvf(w)(t)+dt\,\zr(w)(t)\;,$$ where $\zvf(t):\op{Zin}(W)\to\op{Zin}(V)$ and $\zr(t):\op{Zin}(W)\to\op{Zin}(V)$ have degrees $0$ and $1$, respectively (the grading of $\op{Zin}(V)$ is induced by that of $V$ and the grading of $\zW(I)$ is the homological one). Let us now translate the remaining properties
of $\zh$ into properties of $\zvf$ and $\zr$. We denote by $d_I=dt\,d_t$ the de Rham differential of $I$. Since $\zh$ is a chain map,
$$dt\,d_t\zvf+d_V\zvf-dt\,d_V\zr=(d_I\0\id+\id\0 d_V)\zh=\zh d_W=\zvf d_W+dt\, \zr d_W\;,$$ so that $$d_{V}\zvf=\zvf d_{W}\quad \text{and}\quad d_t\zvf=d_{V}\zr+\zr d_{W}=[d,\zr]\;.$$ As $\zh$
is also an algebra morphism, we have, for $w,w'\in\op{Zin}(W),$ $$
\zvf(w\cdot w')+dt\,\zr(w\cdot w')=
(\zvf(w)+dt\,\zr(w))\cdot(\zvf(w')+dt\,\zr(w')) $$ $$= \zvf(w)\cdot\zvf(w')+(-1)^{w}dt\,(\zvf(w)\cdot\zr(w')) +dt\,(\zr(w)\cdot\zvf(w'))\;,$$ and $\zvf$ (resp., $\zr$) is a family of {\small DGA} morphisms (resp., of degree $1$ $\zvf$-Leibniz maps) from $\op{Zin}(W)$ to $\op{Zin}(V)$. Eventually, $$p=\ze_1^0\zh=\zvf(0)\quad\text{and}\quad q=\ze_1^1\zh=\zvf(1)\;.$$\end{proof}

\newcommand{\mk}{\mathds{k}}

\subsubsection{Horizontal and vertical compositions}

{\em In literature, the `categories' of Leibniz (resp., Lie) infinity algebras over $V$ (finite-dimensional) and of quasi-free {\small DGZA}-s (resp., quasi-free {\small DGCA}-s) over $s^{-1}V^*$ are (implicitly or explicitly) considered equivalent}. This conjecture is so far corroborated by the results of this paper. Hence, let us briefly report on compositions of concordances.\medskip

Let $\zh:p\Rightarrow q$ and $\zh':p'\Rightarrow q'$, 

\begin{equation}\label{horizontal_composition_diagramm}
\xymatrix
@R=1pc{
&\ar@{=>}^{\eta}[dd]&&\ar@{=>}^{\eta'}[dd]&\\
(\mathrm{Zin}(W),d_W)\ar@/^2pc/@{->}[rr]^-{p}\ar@/_2pc/@{->}[rr]_-{q}&&(\mathrm{Zin}(V),d_V)\ar@/_2pc/@{->}[rr]_-{q'}\ar@/^2pc/@{->}[rr]^-{p'}&&(\mathrm{Zin}(U),d_U)\;,\\
&&&&
}
\end{equation}
be concordances between {\small DGA} morphisms. Their horizontal composite $\zh'\circ_0\zh:p'\circ p\Rightarrow q'\circ q$,
$$
\xymatrix
@R=1pc{
&\ar@{=>}^{\eta'\circ_0\eta}[dd]\\
(\mathrm{Zin}(W),d_W)\ar@/^2pc/@{->}[rr]^-{p'\circ p}\ar@/_2pc/@{->}[rr]_-{q'\circ q}&&(\mathrm{Zin}(U),d_U)\;,\\
&&&&
}
$$
is defined by
\begin{equation}
(\eta'\circ_0\eta)(t)=(\varphi'(t)\circ\varphi(t))+dt\,(\varphi'(t)\circ\rho(t)+\rho'(t)\circ\varphi(t))\;,
\end{equation}
with self-explaining notation. It is easily checked that the first term of the {\small RHS} and the coefficient of $dt$ in the second term have the properties needed to make $\zh'\circ_0\zh$ a concordance between $p'\circ p$ and $q'\circ q$.

As for the vertical composite $\zh'\circ_1\zh:p\Rightarrow r$ of concordances $\zh:p\Rightarrow q$ and $\zh':q\Rightarrow r$,

$$\xymatrix@C=3pc@R=3pc
{
&\ar@{=>}[d]^{\eta}&\\
(\mathrm{Zin}(W),d_W)\ar@/^4pc/@{->}^-p[rr]\ar@{->}^-(0.43)q[rr]\ar@/_4pc/@{->}_-r[rr]&\ar@{=>}^{\zh'}[d]&(\mathrm{Zin}(V),d_V)\;,\\
&&
}$$ note that the composability condition $\zvf(1)=q=t(\zh)=s(\zh')=q=\zvf'(0)$, where $s,t$ denote the source and target maps, does not encode any information about $\zr(1),\zr'(0)$. Hence, the usual `half-time' composition cannot be applied.

\begin{rem} The preceding observation is actually the shadow of the fact that the `category' of Leibniz infinity algebras is an infinity category.\end{rem}

\subsection{Infinity homotopies}\label{sec_Getzler_Shoiket}

Some authors addressed directly or indirectly the concept of homotopy of Lie infinity algebras ($L_{\infty}$-algebras). As mentioned above, in the (equivalent) `category' of quasi-free {\small DGCA}-s, the classical picture of homotopy leads to concordances. In the `category' of $L_{\infty}$-algebras itself, morphisms can be viewed as Maurer-Cartan ({\small MC}) elements of a specific $L_{\infty}$-algebra \cite{Dol07},\cite{BS07}, which yields the notion of `gauge homotopy' between $L_{\infty}$-morphisms. Additional notions of homotopy between {\small MC} elements do exist: Quillen and cylinder homotopies. On the other hand, Markl \cite{Mar02} uses colored operads to construct homotopies for $\infty$-morphisms in a systematic way. The concepts of concordance, operadic homotopy, as well as Quillen, gauge, and cylinder homotopies are studied in detail in \cite{DP12}, for homotopy algebras over any Koszul operad, and they are shown to be equivalent, essentially due to homotopy transfer.\medskip

In this subsection, we focus on the Leibniz infinity case and provide a brief account on the relationship between concordances, gauge homotopies, and Quillen homotopies (in the next section, we explain why the latter concept is the bridge to Getzler's \cite{Get09} (and Henriques' \cite{henriques}) work, as well as to the infinity category structure on the set of Leibniz infinity algebras).\medskip

Let us stress that all series in this section converge under some local finiteness or nilpotency conditions (for instance pronilpotency or completeness).

\subsubsection{Gauge homotopic Maurer-Cartan elements}\label{Gauge}

Lie infinity algebras over ${\frak g}$ are in bijective correspondence with quasi-free {\small DGCC}-s $(\op{Com}^c(s{\frak g}),D)$, see Equation (\ref{GKCoalg}). Depending on the definition of the $i$-ary brackets $\ell_i$, $i\ge 1$, from the corestrictions $D_i:(s{\frak g})^{\odot i}\to s{\frak g}$, where $\odot$ denotes the graded symmetric tensor product, one obtains various sign conventions in the defining relations of a Lie infinity algebra. When setting $\ell_i:=D_i$ (resp., $\ell_i:=s^{-1}D_i\, s^{\,i}$ (our choice in this paper), $\ell_i:=(-1)^{i(i-1)/2}s^{-1}D_i\, s^{\,i}$), we get a Voronov $L_{\infty}$-antialgebra \cite{Vor05} (resp., a Lada-Stasheff $L_{\infty}$-algebra \cite{LS93}, a Getzler $L_{\infty}$-algebra \cite{Get09}) made up of graded symmetric multilinear maps $\ell_i:(s{\frak g})^{\times i}\to s{\frak g}$ of degree $-1$ (resp., by graded antisymmetric multilinear maps $\ell_i:{\frak g}^{\times i}\to {\frak g}$ of degree $i-2$, idem), which verify the conditions
\be\sum_{i+j=r+1}\sum_{\zs\in\op{Sh}(i,j-1)}\ze(\zs)\ell_{j}(\ell_i(sv_{\zs_1},\ldots,sv_{\zs_i}),sv_{\zs_{i+1}},\ldots,sv_{\zs_r})=0\;,\label{VLInfty}\ee for all homogeneous $sv_k\in s{\frak g}$ and all $r\ge 1$ (resp., the same higher Jacobi identities (\ref{VLInfty}), except that the sign $\ze(\zs)$ is replaced by $(-1)^{i(j-1)}\ze(\zs)\op{sign}(\zs)$, by $(-1)^{i}\ze(\zs)\op{sign}(\zs)$).\medskip

As the {\small MC} equation of a Lie infinity algebra $(\frak g,\ell_i)$ must correspond to the {\small MC} equation given by the $D_i$, it depends on the definition of the operations $\ell_i$. For a Lada-Stasheff $L_{\infty}$-algebra (resp., a Getzler $L_{\infty}$-algebra), we obtain that the set $\op{MC}(\frak g)$ of {\small MC} elements of $\frak g$ is the set of solutions $\za\in\frak g_{-1}$ of the {\small MC} equation \be\label{MCLInfty}{\sum_{i=1}^{\infty}\frac{1}{i!}(-1)^{i(i-1)/2}\ell_i(\za,\ldots,\za)=0}\quad (\text{resp.}, \sum_{i=1}^{\infty}\frac{1}{i!}\ell_i(\za,\ldots,\za)=0)\;.\ee

\begin{rem} Since we prefer the original definition of homotopy Lie \cite{LS93} and homotopy Leibniz \cite{AP10} algebras, but use the results of \cite{Get09}, we adopt -- to facilitate the comparison with \cite{Get09} -- Getzler's sign convention whenever his work is involved, and change signs appropriately later, when applying the results in our context.\end{rem}

Hence, we now consider the second {\small MC} equation (\ref{MCLInfty}). Further, for any $\za\in\frak g_{-1}$, the twisted brackets {$$\ell^{\za}_i(v_1,\ldots,v_i)=\sum_{k=0}^{\infty}\frac{1}{k!}\ell_{k+i}(\za^{\0
k},v_1,\ldots,v_i)\;,$$} $v_1,\ldots,v_i\in {\frak g}$, are a sequence of graded antisymmetric multilinear maps of degree $i-2$. It is well-known that the $\ell_i^{\za}$ endow $\frak g$ with a new Lie infinity structure, if $\za\in\op{MC}({\frak g}).$ Finally, any vector $r\in{\frak g}_0$ gives rise to a vector field \be V_r:{\frak g}_{-1}\ni\za\mapsto {V_r(\za):=-\ell_1^{\za}(r)}=-\sum_{k=0}^{\infty}\frac{1}{k!}\ell_{k+1}(\za^{\0
k},r)=\sum_{k=1}^{\infty}\frac{(-1)^{k}}{(k-1)!}\ell_{k}(r,\za^{\0 (k-1)})\in{\frak g}_{-1}\;.\label{SpecVectfield}\ee This field restricts to a vector field of the Maurer-Cartan quadric $\op{MC}(\frak g)$ \cite{DP12}. It follows that the integral curves {\be\label{FlowEq}d_t\za=V_r(\za(t))\;,\ee} starting from points in the quadric, are located inside $\op{MC}(\frak g)$. Hence, the

\begin{defi}\label{DolSho} $($\cite{Dol07}, \cite{BS07}$)$ Two {\small MC} elements $\za,\zb\in\op{MC}({\frak g})$ of a Lie infinity algebra $\frak g$ are {\em gauge homotopic} if there exists $r\in{\frak g}_0$ and an integral curve $\za(t)$ of $V_r$, such that $\za(0)=\za$ and $\za(1)=\zb$.\end{defi}

This gauge action is used to define the deformation functor $\op{Def}:{\tt L}_{\infty}\to{\tt Set}$ from the category of Lie infinity algebras to the category of sets. Moreover, it will provide a concept of homotopy between Leibniz infinity morphisms.\medskip

Let us first observe that Equation (\ref{FlowEq}) is a 1-variable ordinary differential equation ({\small ODE}) and can be solved via an iteration procedure. The integral curve with initial point $\za\in\op{MC}(\frak g)$ is computed in \cite{Get09}. When using our sign convention in the defining relations of a Lie infinity algebra, we get an {\small ODE} that contains different signs and the solution of the corresponding Cauchy problem reads \begin{equation} \alpha(t)=\sum_{k=0}^\infty\frac{t^k}{k!}e_{\za}^k(r)\;,\label{GetzlerGaugeHomotopyExplicit}\end{equation} where the $e_{\za}^k(r)$ admit a nice combinatorial description in terms of rooted trees. Moreover, they can be obtained inductively:
\begin{equation}\label{Getzler induction formula}
\left\{\begin{array}{l}
\displaystyle e_{\za}^{i+1}(r)=\sum\limits_{n=0}^{\infty}\frac{1}{n!}(-1)^{\frac{n(n+1)}{2}}\sum\limits_{k_1+...+k_n=i}\frac{i!}{k_1!...k_n!}\ell_{n+1}(e_{\za}^{k_1}(r),...,e_{\za}^{k_n}(r),r)\;,\\
e^0_{\za}(r)=\alpha\;.
\end{array}\right.
\end{equation}
It follows that $\za,\zb\in\op{MC}({\frak g})$ are gauge homotopic if \be\zb-\za=\sum_{k=1}^{\infty}\frac{1}{k!}e_{\za}^k(r)\;,\label{FundaHomotopyEquation}\ee for some $r\in{\frak g}_0.$

\subsubsection{Simplicial de Rham algebra}\label{SimpicialDR}

\newcommand*{\EnsQuot}[2]%
{\ensuremath{%
    #1/\!\raisebox{-.65ex}{\ensuremath{{#2}}}}}

We first fix the notation.\medskip

Let $\zD$ be the {\it simplicial category} with objects the nonempty finite ordinals $[n]=\{0,\ldots, n\}$, $n\ge 0$, and morphisms the order-respecting functions $f:[m]\to [n]$. Denote by $\zd_n^i:[n-1]\rightarrowtail [n]$ the injection that omits the image $i$ and by $\zs_n^i:[n+1]\twoheadrightarrow [n]$ the surjection that assigns the same image to $i$ and $i+1$, $i\in\{0,\ldots,n\}$.

A {\it simplicial object} in a category {\tt C} is a functor $X \in [\zD^{\op{op}},{\tt C}\,]\;.$ It is completely determined by the simplicial data $(X_n,d\,^n_i,s\,^n_i)$, ${n\ge 0,\, i\in\{0,\dots,n\}},$ where $X_n=X[n]$ ($n$-simplices), $d\,^n_i=X(\zd^i_n)$ (face maps), and $s\,^n_i=X(\zs^i_n)$ (degeneracy maps). We denote by ${\tt SC}$ the functor category $[\zD^{\op{op}},{\tt C}]$ of simplicial objects in ${\tt C}$.

The simplicial category is embedded in its Yoneda dual category: $$h_{\ast}: \zD\ni[n]\mapsto \h_{\zD}(-,[n])\in [\zD^{\op{op}},{\tt Set}]={\tt SSet}\;.$$ We refer to the functor of points of $[n]$, i.e. to the simplicial set $\zD[n]:=\h_{\zD}(-,[n]),$ as the {\it standard simplicial $n$-simplex}. Moreover, the Yoneda lemma states that $$\h_{\zD}([n],[m])\simeq \h(\h_{\zD}(-,[n]),\h_{\zD}(-,[m]))=\h(\zD[n],\zD[m]) \;.$$ This bijection sends $f:[n]\to [m]$ to $\zvf$ defined by $\zvf_{[k]}(\bullet)= f\circ \bullet$ and $\zvf$ to $\zvf_{[n]}(\op{id}_{[n]})$. In the following we identify $[n]$ (resp., $f$) with $\zD[n]$ (resp., $\zvf$).

The set $S_n$ of $n$-simplices of a simplicial set $S$ is obviously given by $S_n\simeq\h(\h_{\zD}(-,[n]),S)=\h(\zD[n],S).$

Let us also recall the adjunction $$|-|:{\tt SSet}\rightleftarrows {\tt Top}:\op{Sing}$$ given by the `geometric realization functor' $|-|$ and the `singular complex functor' Sing. To define $|-|$, we first define the realization $|\zD[n]|$ of the standard simplicial $n$-simplex to be the {\it standard topological $n$-simplex} $$\zD^n=\{(x_0,\ldots,x_n)\in \R^{{n+1}}: x_i\ge 0, \sum_ix_i=1\}\;.$$ We can view $|-|$ as a functor $|-|\in[\zD,{\tt Top}]\;$. Indeed, if $f:[n]\to [m]$ is an order-respecting map, we can define a continuous map $|f|:\zD^n\to \zD^m$ by $$|f|[x_0,\ldots,x_n]=[y_0,\ldots,y_m]\;,$$ where $y_i=\sum_{j\in f^{-1}\{i\}}x_j$.\medskip

Let $\w_{\K}^{\star}(x_0,\ldots,x_n,dx_0,\ldots,dx_n)$ be the free graded commutative algebra generated over $\K$ by the degree 0 (resp., degree 1) generators $x_i$ (resp., $dx_i$). If we divide out the relations $\sum_ix_i=1$ and $\sum_idx_i=0$ and set $d(x_i)=dx_i$ and $d(dx_i)=0$, we obtain a quotient {\small DGCA} $$\zW_n^{\star}=\EnsQuot{\w_{\K}^{\star}(x_0,\ldots,x_n,dx_0,\ldots,dx_n)}{\lp\sum_ix_i-1,\sum_idx_i\rp}$$ that can be identified, for $\K=\R$, with the algebra of polynomial differential forms $\zW^{\star}(\zD^n)$ of the standard topological $n$-simplex $\zD^n$. When defining $\zW^{\star}:\zD^{\op{op}}\to {\tt DGCA}$ by $\zW^{\star}[n]:=\zW^{\star}_n$ and, for $f:[n]\to [m]$, by $\zW^{\star}(f):=|f|^*:\zW^{\star}_m\to \zW^{\star}_n$ (use the standard pullback formula for differential forms given by $y_i=\sum_{j\in f^{-1}\{i\}}x_j$), we obtain a simplicial differential graded commutative algebra $\zW^{\star}\in {\tt SDGCA}$. Hence, the face maps $d\,^n_i:\zW^{\star}_n\to \zW^{\star}_{n-1}$ are the pullbacks by the $|\zd^i_n|:\zD^{n-1}\to \zD^n$, and similarly for the degeneracy maps. In particular, $d^2_0=|\zd_2^0|^*:\zW^{\star}_2\to \zW^{\star}_{1}$ is induced by $y_0=0,y_1=x_0,y_2=x_1$. Let us eventually introduce, for $0\le i\le n$, the vertex $e_i$ of $\zD^n$ whose $(i+1)$-th component is equal to $1$, as well as the evaluation map $\ze_n^i:\zW^{\star}_n\to \K$ at $e_i$ (`pullback' induced by $(y_0,\ldots,y_n)=e_i$).

\subsubsection{Quillen homotopic Maurer-Cartan elements}\label{QuillenDefSect}

We already mentioned that, if $({\frak g},\ell_i)$ is an $L_{\infty}$-algebra and $(A,\cdot,d)$ a {\small DGCA}, their tensor product ${\frak g}\0 A$ has a canonical $L_{\infty}$-structure $\bar\ell_i$. It is given by $$\bar\ell_1(v\0 a)=(\ell_1\0\op{id}+\op{id}\0 d)(v\0 a)=\ell_1(v)\0 a+(-1)^vv\0 d(a)$$ and, for $i\ge 2$, by $$\bar\ell_i(v_1\0 a_1,\ldots,v_i\0 a_i)=\pm\ell_i(v_1,\ldots,v_i)\0(a_1\cdot \ldots \cdot a_i)\;,$$ where $\pm$ is the Koszul sign generated by the commutation of the variables.\medskip

The following concept originates in Rational Homotopy Theory.

\begin{defi} Two {\small MC} elements $\za,\zb\in\op{MC}({\frak g})$ of a Lie infinity algebra $\frak g$ are {\em Quillen homotopic} if there exists a {\small MC} element $\bar\zg\in\op{MC}(\bar{\frak g}_1)$ of the Lie infinity algebra $\bar{\frak g}_1:={\frak g}\0\zW^{\star}_1$, such that $\ze_1^0\bar\zg=\za$ and $\ze_1^1\bar\zg=\zb$ $($where the $\ze_1^i$ are the natural extensions of the evaluation maps$)$.\end{defi}

From now on, we accept, in the definition of gauge equivalent {\small MC} elements, vector fields $V_{r(t)}$ induced by time-dependent $r=r(t)\in{\frak g}_0$. The next result is proved in \cite{Can99} (see also \cite{Man99}). A sketch of the proof will be given later.

\begin{prop}
Two {\small MC} elements of a Lie infinity algebra are Quillen homotopic if and only if they are gauge homotopic.
\end{prop}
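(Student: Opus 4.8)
The plan is to establish the two implications separately, passing through the explicit flow formula \eqref{GetzlerGaugeHomotopyExplicit}--\eqref{Getzler induction formula} and the structure of $\zW^\star_1=\w_\K^\star(x_0,x_1,dx_0,dx_1)/(\sum x_i-1,\sum dx_i)$. Note first that $\zW^\star_1$ is freely generated, as a {\small DGCA}, by a single degree-$0$ generator $t:=x_1$ and the degree-$1$ generator $dt$, with $d(t)=dt$; hence a generic element of $\bar{\frak g}_1={\frak g}\0\zW^\star_1$ in degree $-1$ can be written uniquely as $\bar\zg(t)=\za(t)+dt\,r(t)$ with $\za(t)\in{\frak g}_{-1}$ a polynomial curve and $r(t)\in{\frak g}_0$ a polynomial curve (here I use that the $\ell_i$ are extended to $\bar{\frak g}_1$ by the rule $\bar\ell_i(v_1\0a_1,\ldots)=\pm\ell_i(v_1,\ldots)\0 a_1\cdots a_i$, so that $dt$ appears at most linearly because $(dt)^2=0$). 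The two evaluation maps $\ze_1^0,\ze_1^1$ send $\bar\zg$ to $\za(0),\za(1)$.

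First I would prove \textbf{Quillen $\Rightarrow$ gauge}. Given $\bar\zg(t)=\za(t)+dt\,r(t)\in\op{MC}(\bar{\frak g}_1)$, expand the {\small MC} equation $\sum_i\frac1{i!}\bar\ell_i(\bar\zg,\ldots,\bar\zg)=0$ and separate the part without $dt$ from the coefficient of $dt$. The $dt$-free part says precisely that $\za(t)\in\op{MC}({\frak g})$ for every $t$ (a one-parameter family of {\small MC} elements). Differentiating that identity in $t$, or equivalently reading off the $dt$-coefficient directly, produces an equation of the form $d_t\za(t)=-\ell_1^{\za(t)}(r(t))+(\text{correction terms})$; the point is that, because each $\bar\ell_i$ is multilinear and $dt$ can be inserted in any one slot, the $dt$-coefficient is exactly $\sum_{k\ge0}\frac1{k!}\bar\ell_{k+1}(\za(t)^{\0k},r(t))$ up to the combinatorial factor $\frac1{i!}\cdot i$, i.e.\ $d_t\za(t)=V_{r(t)}(\za(t))$ in the notation of \eqref{SpecVectfield}--\eqref{FlowEq}. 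Since we now allow time-dependent $r(t)$ in the definition of gauge homotopy, $\za(t)$ is an integral curve of $V_{r(t)}$ joining $\za$ to $\zb$, so $\za$ and $\zb$ are gauge homotopic.

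Next, \textbf{gauge $\Rightarrow$ Quillen}. Conversely, suppose $\za(t)$ solves the flow equation \eqref{FlowEq} (with possibly time-dependent $r(t)$) with $\za(0)=\za$, $\za(1)=\zb$; by the iteration procedure this $\za(t)$ is a polynomial (or at least formal power series, which converges by the standing nilpotency/completeness hypothesis) in $t$, and so is $r(t)$. Set $\bar\zg(t):=\za(t)+dt\,r(t)\in\bar{\frak g}_1$. Then $\ze_1^0\bar\zg=\za$, $\ze_1^1\bar\zg=\zb$ by construction, and it remains to check $\bar\zg\in\op{MC}(\bar{\frak g}_1)$. Expanding $\sum_i\frac1{i!}\bar\ell_i(\bar\zg,\ldots,\bar\zg)$ again splits into its $dt$-free part, which vanishes because $\za(t)\in\op{MC}({\frak g})$ for all $t$ (integral curves of $V_{r(t)}$ starting in the quadric stay in the quadric, as recalled before Definition~\ref{DolSho}), and its $dt$-coefficient, which is $d_t\za(t)-V_{r(t)}(\za(t))$ up to sign and hence vanishes by the flow equation. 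Therefore $\bar\zg$ is an {\small MC} element of $\bar{\frak g}_1$ and $\za\sim_{\text{Quillen}}\zb$.

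\textbf{Main obstacle.} The routine-looking but genuinely delicate step is the bookkeeping in extracting the $dt$-coefficient of the {\small MC} equation on $\bar{\frak g}_1$: one must verify that inserting the degree-$1$ element $dt\,r(t)$ into the various slots of $\bar\ell_i$, with the Koszul signs generated by moving $dt$ past the $\za(t)$'s and the antisymmetrization, reassembles exactly into the vector field $V_{r(t)}(\za(t))=-\sum_{k\ge0}\frac1{k!}\ell_{k+1}(\za(t)^{\0k},r(t))$ of \eqref{SpecVectfield}, with no stray signs or combinatorial factors, and that the $dt$-free part is literally the {\small MC} condition for $\za(t)$. Since the paper has chosen Getzler's sign convention \eqref{MCLInfty} precisely to make \eqref{FlowEq}--\eqref{GetzlerGaugeHomotopyExplicit} clean, I expect this sign check to go through as in \cite{Get09}; the convergence of all series is covered by the blanket completeness assumption stated at the start of the subsection, and smoothness/polynomiality of $\za(t)$ follows from the explicit inductive formula \eqref{Getzler induction formula}. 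A minor additional point is to confirm that the natural extension of $\ze_1^i$ to $\bar{\frak g}_1$ really does kill the $dt$-part and evaluate $t$ at $0$ and $1$, which is immediate from $\ze_1^i(f(t)+dt\,g(t))=f(i)$.
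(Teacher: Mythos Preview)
Your proposal is correct and matches the paper's own treatment. The paper does not give a self-contained proof here: it cites \cite{Can99} and \cite{Man99}, and later (in the subsection comparing concordances and infinity homotopies) sketches exactly your argument --- decompose $\bar\zg=\zg(t)\otimes1+r(t)\otimes dt$, then read the $dt$-free part of the {\small MC} equation as $\sum_p\frac1{p!}{\cal L}_p(\zg(t),\ldots,\zg(t))=0$ and the $dt$-coefficient as the gauge flow $d_t\zg=-\sum_{p\ge0}\frac1{p!}{\cal L}_{p+1}(\zg(t),\ldots,\zg(t),r(t))$, see Equation~(\ref{Getzler-Shoiket_homotopy}).

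One small correction: your phrase ``differentiating that identity in $t$, or equivalently reading off the $dt$-coefficient directly'' conflates two distinct operations. Differentiating the $dt$-free {\small MC} identity in $t$ yields the tangential (Bianchi-type) relation $\sum_p\frac1{(p-1)!}\ell_p(d_t\za,\za,\ldots,\za)=0$, \emph{not} the flow equation. The term $d_t\za$ in the $dt$-coefficient arises instead from $\bar\ell_1=\ell_1\0\id+\id\0 d$ acting on $\za(t)$ via the de Rham differential $d$ of $\zW^\star_1$. Your subsequent description (``because each $\bar\ell_i$ is multilinear and $dt$ can be inserted in any one slot'') is the right mechanism for the remaining terms, so the conclusion stands; just drop the ``differentiating'' alternative.
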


\subsubsection{Infinity morphisms as Maurer-Cartan elements and infinity homotopies}

The possibility to view morphisms in $\h_{\op{DGP^{\ac}C}}(C,{\cal F}^{\op{gr,c}}_{P^{\ac}}(sW))$ as {\small MC} elements is known from the theory of the bar and cobar constructions of algebras over an operad. In \cite{DP12}, the authors showed that the fact that infinity morphisms between $P_{\infty}$-algebras $V$ and $W$, i.e. morphisms in $$\h_{\op{DGP^{\ac}C}}({\cal F}^{\op{gr,c}}_{P^{\ac}}(sV), {\cal F}^{\op{gr,c}}_{P^{\ac}}(sW))\;,$$ are 1:1 with Maurer-Cartan elements of an $L_{\infty}$-structure on $$\h_{\K}({\cal F}^{\op{gr,c}}_{P^{\ac}}(sV),W)\;,$$ is actually a consequence of a more general result based on the encoding of two $P_{\infty}$-algebras and an infinity morphism between them in a {\small DG} colored free operad. In the case $P={\sf Lie}$, one recovers the fact \cite{BS07} that \be\label{NatBijCoAlg}\h_{\op{DGCC}}(C,\op{Com}^c(sW))\simeq \op{MC}(\h_{\K}(C,W))\;,\ee where $C$ is any locally conilpotent {\small DGCC}, where $W$ is an $L_{\infty}$-algebra, and where the {\small RHS} is the set of {\small MC} elements of some convolution $L_{\infty}$-structure on $\h_{\K}(C,W)$.\medskip

In the sequel we detail the case $P={\sf Lei}$. Indeed, when interpreting infinity morphisms of Leibniz infinity algebras as {\small MC} elements of a Lie infinity algebra, the equivalent notions of gauge and Quillen homotopies provide a concept of homotopy between Leibniz infinity morphisms.

\begin{prop}\label{proposition_about_lie_inf_on_space_of_homomorphisms}
Let $(V,\ell_i)$ and $(W, m_i)$ be two Leibniz infinity algebras and let $(\mathrm{Zin}^c(sV),D)$ be the quasi-free {\small\em DGZC} that corresponds to $(V,\ell_i)$. The graded vector space $$L(V,W):=\h_{\K}(\mathrm{Zin}^c(sV),W)$$ carries a convolution Lie infinity structure given by
\begin{equation}\label{lieInfinityStr1}
\mathcal{L}_1f=m_1\circ f+(-1)^ff\circ D
\end{equation}
and, as for $\mathcal{L}_p(f_1,...,f_p)$, $p\ge 2$, by
\begin{equation}\label{lieInfinityStrp}
\xymatrix@C=3.5pc{
\mathrm{Zin}^c(sV)\ar@{->}^-{\Delta^{p-1}}[r]&(\mathrm{Zin}^c(sV))^{\otimes p}\ar@{->}^-{\sum\limits_{\sigma\in S(p)}\varepsilon(\sigma)\, \op{sign}(\sigma)\, f_{\sigma(1)}\otimes...\otimes f_{\sigma(p)}}[rr]&\qquad\quad&W^{\otimes p}\ar@{->}^-{m_p}[r]&W\;,
}
\end{equation}
where $f,f_1,\ldots,f_p\in L(V,W)$, $\Delta^{p-1}=(\Delta\otimes \op{id}^{\otimes(p-2)})...(\Delta\otimes \op{id})\Delta\,$, where $S(p)$ denotes the symmetric group on $p$ symbols, and where the central arrow is the graded antisymmetrization operator.
\end{prop}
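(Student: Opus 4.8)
The plan is to establish the claimed convolution Lie infinity structure on $L(V,W)$ by transporting the known $L_\infty$-structure from the general bar-cobar/colored-operad machinery (the statement recalled just before the proposition, specialized from the Lie case in \cite{BS07} and the general operadic case in \cite{DP12}) and then checking that, under the identifications of Theorems \ref{LeibInftyAlg1:1} and \ref{LeibInftyAlgMorph1:1}, the abstract brackets take exactly the component form \eqref{lieInfinityStr1}--\eqref{lieInfinityStrp}. Concretely, I would proceed in three steps. First, I would recall that $\op{Zin}^c(sV)$ is a locally conilpotent {\small DGZC} and that $W$, as a Leibniz infinity algebra, is encoded by a codifferential on $\op{Zin}^c(sW)$; the convolution object $\h_\K(\op{Zin}^c(sV),W)$ then inherits a graded Lie infinity structure because the source is a (co)algebra over the Koszul dual cooperad $\mathsf{Zin}^{\text{!`}} = \mathsf{Lei}^{\text{!`}}$ and the target is an algebra over the operad governing $W$. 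Second, I would compute the corestrictions of this structure explicitly: the unary bracket is the differential of the convolution complex, namely $f \mapsto m_1 \circ f + (-1)^{|f|} f \circ D$, which is \eqref{lieInfinityStr1}; and the $p$-ary bracket for $p \ge 2$, before antisymmetrization, is the composite
\[
\op{Zin}^c(sV) \xrightarrow{\ \Delta^{p-1}\ } (\op{Zin}^c(sV))^{\otimes p} \xrightarrow{\ f_1 \otimes \cdots \otimes f_p\ } W^{\otimes p} \xrightarrow{\ m_p\ } W,
\]
so that $\mathcal L_p(f_1,\dots,f_p)$ is its graded antisymmetrization over $S(p)$ with Koszul signs $\varepsilon(\sigma)$ and signature factors $\op{sign}(\sigma)$, which is \eqref{lieInfinityStrp}. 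Third, I would verify that these maps satisfy the higher Jacobi identities \eqref{VLInfty} (in the sign convention fixed in the remarks above): this follows formally from the coassociativity-type identity for the iterated Zinbiel comultiplication $\Delta^{p-1}$ (the coderivation property of $D$ relative to $\Delta$, established in the definition of {\small DGZC}) together with the higher Jacobi identities defining the Leibniz infinity structure $m_i$ on $W$.

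The main verification, and the place where one must be careful rather than merely formal, is the sign bookkeeping in step three: one has to confirm that the Koszul signs produced by permuting the arguments $f_i$ through the comultiplication $\Delta^{p-1}$, the signs $\varepsilon(\sigma)\,\op{sign}(\sigma)$ built into the antisymmetrization, and the signs in the defining relations of the Leibniz infinity structure on $W$ combine to give precisely the relations \eqref{VLInfty} with $\ze(\zs)$ replaced by $(-1)^{i(j-1)}\ze(\zs)\op{sign}(\zs)$ (our Leibniz/Lada--Stasheff convention). This is the step I expect to be the main obstacle, since the Zinbiel comultiplication \eqref{FreeZinbielCoProducFormulaOnTV} is \emph{not} cocommutative, so the antisymmetrization over $S(p)$ does not simplify as it would in the Lie/cocommutative case, and one must track the asymmetry of $\Delta$ carefully. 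I would handle it by reducing to the general statement of \cite{DP12}: since infinity morphisms of $\mathsf{Lei}_\infty$-algebras and an infinity morphism between them are encoded in a {\small DG} colored free operad, the convolution $L_\infty$-structure on $\h_\K(\op{Zin}^c(sV),W)$ is a special case of their result, and the only genuinely new content here is the \emph{explicit component formula}; thus it suffices to match \eqref{lieInfinityStr1}--\eqref{lieInfinityStrp} against the corestrictions of the abstract structure on low-arity generators, where the computation is finite and transparent, and then invoke freeness of $\op{Zin}^c(sV)$ to conclude that the coderivation/convolution brackets are determined by these corestrictions.

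Finally, for completeness I would note that the hypotheses of the introductory paragraph of Subsection \ref{sec_Getzler_Shoiket} — local finiteness or (pro)nilpotency ensuring convergence — are what guarantee that the infinite sums implicit in using this $L_\infty$-structure (e.g. the twisted brackets and the Maurer--Cartan equation \eqref{MCLInfty}) make sense; the structure maps $\mathcal L_p$ themselves are defined for each fixed $p$ without any convergence issue, since $\Delta^{p-1}$ lands in a finite tensor power and $\op{Zin}^c(sV) = \op{Zin}^c(sV)$ is built from the reduced tensor module, on which $\Delta$ is locally conilpotent. Hence the proposition reduces, modulo the sign check, to an explicit transcription of the operadic convolution structure, which is what the proof will carry out.
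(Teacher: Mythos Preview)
Your proposal is correct and takes essentially the same approach as the paper: the paper's proof consists of the single line ``See \cite{Laa02} or \cite{DP12}. A direct verification is possible as well,'' and your three-step outline is precisely an expansion of what such a citation-plus-direct-verification would amount to. If anything, you are more explicit than the paper about where the actual work lies (the sign bookkeeping arising from the non-cocommutativity of the Zinbiel comultiplication), which the paper simply absorbs into the references.
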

\begin{proof}
See \cite{Laa02} or \cite{DP12}. A direct verification is possible as well.
\end{proof}

\begin{prop}\label{MorphMC}
Let $(V,\ell_i)$ and $(W, m_i)$ be two Leibniz infinity algebras. There exists a 1:1 correspondence between the set of infinity morphisms from $(V,\ell_i)$ to $(W,m_i)$ and the set of {\small MC} elements of the convolution Lie infinity algebra structure ${\cal L}_i$ on $L(V,W)$ defined in Proposition~\ref{proposition_about_lie_inf_on_space_of_homomorphisms}.
\end{prop}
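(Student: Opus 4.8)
The plan is to combine Theorem~\ref{LeibInftyAlgMorph1:1} with the universal property of the free graded Zinbiel coalgebra, in the spirit of the Lie case recorded in~(\ref{NatBijCoAlg}). By Theorem~\ref{LeibInftyAlgMorph1:1}, an infinity morphism $\varphi=(\varphi_i)$ from $(V,\ell_i)$ to $(W,m_i)$ is exactly a morphism of {\small DGZC}-s $\Phi:(\mathrm{Zin}^c(sV),D_V)\to(\mathrm{Zin}^c(sW),D_W)$, where $D_V,D_W$ are the codifferentials encoding $\ell_i$ and $m_i$. Now $\mathrm{Zin}^c(sW)$ is the free (conilpotent) {\small GZC} over $sW$ and $\mathrm{Zin}^c(sV)$ is conilpotent, so a plain coalgebra morphism $\Phi:\mathrm{Zin}^c(sV)\to\mathrm{Zin}^c(sW)$ --- forgetting the codifferentials --- is freely and uniquely determined by its corestriction $f:=\pi_{sW}\circ\Phi:\mathrm{Zin}^c(sV)\to sW$ to the cogenerators, equivalently by the element $\za:=s^{-1}\circ f\in L(V,W)$ of degree $-1$ (its components $\za_i:V^{\0 i}\to W$ being, up to the usual suspension sign, $\varphi_i\circ(s^{-1})^{\0 i}$). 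The assignment $\varphi\mapsto\za$ is therefore a bijection of graded sets between ``pre-morphisms'' --- sequences $\varphi_i:V^{\0 i}\to W$ of degree $i-1$ with \emph{no} relation imposed --- and $L(V,W)_{-1}$. It remains to see that this bijection carries the genuine infinity morphisms onto $\op{MC}(L(V,W))$.

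To that end, I would translate the chain-map condition $\Phi\,D_V=D_W\,\Phi$ into an equation for $\za$. Both $\Phi D_V$ and $D_W\Phi$ are $\Phi$-coderivations $\mathrm{Zin}^c(sV)\to\mathrm{Zin}^c(sW)$ --- a one-line check from the facts that $\Phi$ is a coalgebra map and $D_V,D_W$ are coderivations --- and a $\Phi$-coderivation with values in the free coalgebra $\mathrm{Zin}^c(sW)$ is determined by its corestriction to $sW$; hence $\Phi D_V=D_W\Phi$ is equivalent to the equality of those two corestrictions. The corestriction of $\Phi D_V$ is $f\circ D_V$, while the corestriction of $D_W\Phi$ is $\sum_{i\ge 1}\pm\,m_i\circ f^{\0 i}\circ\Delta^{i-1}$, suitably antisymmetrized over the $i$ slots --- because $D_W$ is rebuilt from its corestrictions $m_i$ via the iterated comultiplication of~(\ref{FreeZinbielCoProducFormulaOnTV}) and $\Phi$ is a coalgebra map. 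Reading both expressions through the desuspension $s^{-1}$ and comparing with the defining formulas~(\ref{lieInfinityStr1})--(\ref{lieInfinityStrp}) of Proposition~\ref{proposition_about_lie_inf_on_space_of_homomorphisms}, the term $f\circ D_V$ together with the $i=1$ term $m_1\circ f$ assemble into $\mathcal{L}_1\za=m_1\circ\za+(-1)^{\za}\za\circ D_V$, while for $i\ge 2$ one obtains precisely $\tfrac1{i!}(-1)^{i(i-1)/2}\,\mathcal{L}_i(\za,\dots,\za)$. Thus $\Phi D_V=D_W\Phi$ holds if and only if $\sum_{i\ge 1}\tfrac1{i!}(-1)^{i(i-1)/2}\,\mathcal{L}_i(\za,\dots,\za)=0$, i.e. if and only if $\za\in\op{MC}(L(V,W))$ in the sense of~(\ref{MCLInfty}); so the bijection of the first paragraph restricts to the asserted one.

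The one real difficulty is the signs-and-combinatorics bookkeeping in this last comparison: pushing the comultiplication iterates of~(\ref{FreeZinbielCoProducFormulaOnTV}) through the graded antisymmetrization operator of~(\ref{lieInfinityStrp}), keeping track of the Koszul and permutation signs and of the shifts $s,s^{-1}$, and checking that the shuffle count produces exactly the factor $\tfrac1{i!}$ together with the sign $(-1)^{i(i-1)/2}$ fixed by our sign convention in~(\ref{MCLInfty}). This is precisely what the general encoding of two $P_{\infty}$-algebras together with an infinity morphism between them in a {\small DG} colored operad \cite{DP12} accomplishes, here in the case $P={\sf Lei}$; alternatively, with the convolution Lie infinity structure of \cite{Laa02} (Proposition~\ref{proposition_about_lie_inf_on_space_of_homomorphisms}) explicitly at hand, the verification can be carried out by hand. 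Convergence is not an issue: $\mathrm{Zin}^c(sV)$ is locally conilpotent, so on any fixed element only finitely many terms of $D_V$, of $\Phi$ and of the $\mathcal{L}_i$-series contribute, and $L(V,W)$ is complete for the induced filtration, so the Maurer--Cartan series makes sense.
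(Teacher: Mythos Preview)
Your proposal is correct and follows essentially the same route as the paper: both use the cofreeness of $\mathrm{Zin}^c(sW)$ to identify degree $-1$ elements $\za\in L(V,W)$ with {\small GZC}-morphisms $\widehat{s\za}:\mathrm{Zin}^c(sV)\to\mathrm{Zin}^c(sW)$, and then check that the {\small MC} equation for $\za$ is equivalent to $\widehat{s\za}$ commuting with the codifferentials. The paper carries out this last equivalence by a direct chain of sign manipulations, whereas you invoke the fact that $\Phi D_V$ and $D_W\Phi$ are $\Phi$-coderivations (hence determined by their corestrictions) before comparing corestrictions term by term; this is a cleaner packaging of the same computation.
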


Observe that the considered {\small MC} series converges pointwise. Indeed, the evaluation of $\mathcal{L}_p(f_1,...,f_p)$ on a tensor in $\op{Zin}^c(sV)$ vanishes for $p\gg$, in view of the local conilpotency of $\op{Zin}^c(sV)$. Moreover, convolution $L_{\infty}$-algebras are complete, so that their {\small MC} equation converges in the topology induced by the filtration (a descending filtration $F^iL$ of the space $L$ of an $L_{\infty}$-algebra $(L,{\cal L}_k)$ is \emph{compatible} with the $L_{\infty}$-structure ${\cal L}_k$, if ${\cal L}_k(F^{i_1}L,\ldots,F^{i_k}L)\subset F^{i_1+\ldots+i_k}L$, and it is \emph{complete}, if, in addition, the `universal' map $L\to  \varprojlim L/F^iL$ from $L$ to the (projective) limit of the inverse system $L/F^iL$ is an isomorphism) \cite{DP12}.\medskip

Note also that Proposition \ref{MorphMC} is a specification, in the case $P={\sf Lei}$, of the abovementioned 1:1 correspondence between infinity morphisms of $P_{\infty}$-algebras and {\small MC} elements of a convolution $L_{\infty}$-algebra. To increase the readability of this text, we give nevertheless a sketchy proof.

\begin{proof} An {\small MC} element is an $\alpha\in \h_{\K}(\mathrm{Zin}^c(sV),W)$ of degree $-1$ that verifies the {\small MC} equation. Hence, $s\za:\op{Zin}^c(sV)\to sW$ has degree 0 and, since $\op{Zin}^c(sW)$ is free as {\small GZC}, $s\za$ coextends uniquely to $\widehat{s\za}\in\h_{\op{GZC}}(\op{Zin}^c(sV),\op{Zin}^c(sW))$. The fact that $\za$ is a solution of the {\small MC} equation exactly means that $\widehat{s\za}$ is a {\small DGZC}-morphism, i.e. an infinity morphism between the Leibniz infinity algebras $V$ and $W$. Indeed, when using e.g. the relations $\ell_i= s^{-1} D_i\, s^{\,i}$ and $m_i=s^{-1} {\frak D}_i\,s^{\,i}$, and the corresponding version of the {\small MC} equation, we get
$$
\begin{array}{l}
\sum\limits^\infty_{p=1}\frac{1}{p!}(-1)^{\frac{p(p-1)}{2}}\mathcal{L}_p(\alpha,...,\alpha)=0 \Leftrightarrow\\
\sum\limits^\infty_{p=1} (-1)^{\frac{p(p-1)}{2}}m_p(\alpha\otimes...\otimes\alpha)\Delta^{p-1}+(-1)^\alpha\alpha D=0\Leftrightarrow\\
\sum\limits^\infty_{p=1}(-1)^{\frac{p(p-1)}{2}}s^{-1}{\frak D}_p\,s^{\, p}(\alpha\otimes...\otimes\alpha)\Delta^{p-1}+(-1)^\alpha\alpha D=0 \Leftrightarrow\\
\sum\limits^\infty_{p=1}s^{-1}{\frak D}_p(s\alpha\otimes...\otimes s\alpha)\Delta^{p-1}-s^{-1}s\alpha D=0 \Leftrightarrow\\
{\frak D}\widehat{(s\alpha)}=\widehat{(s\za)}D\;.
\end{array}
$$
\end{proof}

Hence, the

\begin{defi} Two infinity morphisms $f,g$ between Leibniz infinity algebras $(V,\ell_i)$, $(W,m_i)$ are {\em infinity homotopic}, if the corresponding {\small MC} elements $\za=\za(f)$ and $\zb=\zb(g)$ of the convolution Lie infinity structure ${\cal L}_i$ on $L=L(V,W)$ are Quillen (or gauge) homotopic. In other words, $f$ and $g$ are infinity homotopic, if there exists $\bar\zg\in\op{MC}_1(\bar{L})$, i.e. an {\small MC} element $\bar\zg$ of the Lie infinity structure $\bar{\cal L}_i$ on $\bar{L}=L\0\zW^{\star}_1$ -- obtained from the convolution structure ${\cal L}_i$ on $L=\h_{\K}(\op{Zin}^c(sV),W)$ via extension of scalars --, such that $\ze_1^0\bar\zg=\za$ and $\ze_1^1\bar\zg=\zb$.\end{defi}

\subsubsection{Comparison of concordances and infinity homotopies}

Since, according to the prevalent philosophy, the `categories' {\tt qfDG$P^{\ac}$CoAlg} and {\tt $P_{\infty}$-Alg} are `equivalent', appropriate concepts of homotopy in both categories should be in 1:1 correspondence. It can be shown \cite{DP12} that, for any type of algebras, the concepts of concordance and of Quillen homotopy are equivalent (at least if one defines concordances in an appropriate way); and as Quillen homotopies are already known to be equivalent to gauge homotopies, the desired result follows in whole generality.\medskip

To accommodate the reader who is not interested in (nice) technicalities, we provide now a sketchy explanation of both relationships, defining concordances dually and assuming for simplicity that $\K=\R$.\medskip

Remember first that we defined concordances, in conformity with the classical picture, in a contravariant way: two infinity morphisms $f,g:V\to W$ between homotopy Leibniz algebras, i.e. two {\small DGA} morphisms $f^*,g^*:\op{Zin}(s^{-1}W^*)\to \op{Zin}(s^{-1}V^*)$, are concordant if there is a morphism $$\zh\in\h_{\op{DGA}}(\op{Zin}(s^{-1}W^*),\op{Zin}(s^{-1}V^*)\0\zW^{\star}_1)\;,$$ whose values at $0$ and $1$ are equal to $f^*$ and $g^*$, respectively. Although we will use this definition in the sequel (observe that we slightly adapted it to future purposes), we temporarily prefer a dual, covariant definition (which has the advantage that the spaces $V,W$ need not be finite-dimensional).\medskip

The problem that the linear dual of the infinite-dimensional {\small DGCA} $\zW^{\star}_1$ (let us recall that $\star$ stands for the (co)homological degree) is not a coalgebra, has already been addressed in \cite{BM12}. The authors consider a space $(\zW_1^{\star})^{\vee}$ made up by the formal series, with coefficients in $\K$, of the elements $\alpha_i=(t^i)^\vee,$ $i\ge0$, and $\beta_i=(t^i\,dt)^\vee,$ $i\ge 0$. For instance, $\sum_{i\in\N}{\frak K}^i\za_i$ represents the map $\{t^i\}_{i\in\N}\to \K$ and assigns to each $t^i$ the coefficient ${\frak K}^i$. The differential $\p$ of $(\zW_1^{\star})^\vee$ is (dual to the de Rham differential $d$ and is) defined by $\p(\alpha_i)=0$ and $\p(\beta_i)=(i+1)\alpha_{i+1}$. As for the coalgebra structure $\zd$, we set
\begin{gather*}
\delta(\alpha_i)=\sum_{a+b=i}\alpha_a\otimes\alpha_b\;,\\
\delta(\beta_i)=\sum_{a+b=i}(\beta_a\otimes\alpha_b+\alpha_a\otimes\beta_b)\;.
\end{gather*}
When extending to all formal series, we obtain a map $\zd:(\zW_1^{\star})^\vee\to (\zW_1^\star)^\vee\widehat\0(\zW_1^\star)^\vee$, whose target is the completed tensor product. To fix this difficulty, one considers the decreasing sequence of vector spaces $(\zW_1^\star)^\vee=:\zL^0\supset \zL^1\supset\zL^2\supset\ldots\;$, where $\zL^i=\zd^{-1}(\zL^{i-1}\0\zL^{i-1})$, $i\ge 1$, and defines the universal {\small DGCC} $\zL:=\cap_{i\ge 1}\zL^i$.\medskip

A concordance can then be defined as a map $$\zh\in\h_{\op{DGC}}(\op{Zin}^c(sV)\0\zL,\op{Zin}^c(sW))$$ (with the appropriate boundary values). It is then easily seen that any Quillen homotopy, i.e. any element in $\op{MC}(L\0\zW_1^\star)$, gives rise to a concordance. Indeed, set ${\cal V}:=sV$ and note that $$L\0\zW_1^\star=\h_{\K}(\op{Zin}^c({\cal V}),W)\0\zW_1^\star=\h_{\K}\lp\bigoplus_{i\ge 1}{\cal V}^{\0 i},W\rp\0\zW_1^\star=\lp\prod_{i\ge 1}\h_{\K}({\cal V}^{\0 i},W)\rp\0\zW_1^\star$$ $$\longrightarrow\; \prod_{i\ge 1}\lp\h_{\K}({\cal V}^{\0 i},W)\0\zW_1^\star\rp\;\longrightarrow \; \prod_{i\ge 1}\h_{\K}({\cal V}^{\0 i}\0\zL,W)=\h_{\K}(\op{Zin}^c({\cal V})\0\zL,W)\;.$$ Only the second arrow is not entirely obvious. Let $(f,\zw)\in \h_{\K}({\cal V}^{\0 i},W)\times\zW_1^\star$ and let $(v,\zl)\in{\cal V}^{\0 i}\times \zL$. Set $\zw=\sum_{a=0}^{N}k_at^a+\sum_{b=0}^N\zk_b\,(t^b\,dt)$ and, for instance, $\zl=\sum_{i\in\N}{\frak K}^i\za_i$. Then $$g(f,\zw)(v,\zl):=(\sum_{i\in\N}{\frak K}^i\za_i)(\zw)f(v):=(\sum_{a=0}^N{\frak K}^ak_a)f(v)\in W\;$$ defines a map between the mentioned spaces. Eventually, a degree $-1$ element of $L\0\zW_1^\star$ (resp., an {\small MC} element of $L\0\zW_1^\star$, a Quillen homotopy) is sent to an element of $\h_{\op{GC}}(\op{Zin}^c({\cal V})\0\zL,\op{Zin}^c{\cal W})$ (resp., an element of $\h_{\op{DGC}}(\op{Zin}^c({\cal V})\0\zL,\op{Zin}^c{\cal W})$, a concordance).\medskip

The relationship between Quillen and gauge homotopy is (at least on the chosen level of rigor) much clearer. Indeed, an element $\bar\zg\in\op{MC}_1(\bar L) = \op{MC}(L\0\zW_1^\star)$ can be decomposed as
$$ \bar\zg=\zg(t)\otimes 1+ r(t)\otimes dt,$$
where $t\in[0,1]$ is the coordinate of $\Delta^1$. When unraveling the {\small MC} equation of the $\bar{\cal L}_i$ according to the powers of $dt$, one gets
\begin{equation}\label{Getzler-Shoiket_homotopy}
\begin{array}{l}
\sum_{p=1}^{\infty}\frac{1}{p!}{\mathcal{L}}_p(\zg(t),...,\zg(t))=0\;,\\
\\
\frac{d\zg}{dt}=-\sum^{\infty}_{p=0}\frac{1}{p!}\mathcal{L}_{p+1}(\zg(t),...,\zg(t),r(t))\;.
\end{array}
\end{equation}
A (nonobvious) direct computation allows to see that the latter {\small ODE}, see Definition \ref{DolSho} of gauge homotopies and Equations (\ref{FlowEq}) and (\ref{SpecVectfield}), is dual (up to dimensional issues) to the {\small ODE} (\ref{DECon}), see Proposition \ref{CharConcord} that characterizes concordances.

\section{Infinity category of Leibniz infinity algebras}

We already observed that vertical composition of concordances is not well-defined and that Leibniz infinity algebras should form an infinity category. It is instructive to first briefly look at infinity homotopies between infinity morphisms of {\small DG} algebras.

\subsection{{\small DG} case}

Remember that infinity homotopies can be viewed as integral curves of specific vector fields $V_r$ of the {\small MC} quadric (with obvious endpoints). In the {\small DG} case, we have, for any $r\in L_0$, $$V_r:L_{-1}\ni\za\mapsto V_r(\za) = -{\cal L}_1(r)-{\cal L}_2(\za,r)\in L_{-1}\;.$$ In view of the Campbell-Baker-Hausdorff formula,

$$ \exp(t V_r)\circ \exp(t V_s) = \exp (t V_r + t V_s + 1/2\; t^2 [V_r, V_s]+ ...)\;. $$
The point is that $$V:L_0\to \op{Vect}(L_{-1})$$ is a Lie algebra morphism -- also after restriction to the {\small MC} quadric; we will not detail this nonobvious fact. It follows that

$$\exp(t V_r)\circ \exp(t V_s) = \exp (t V_{r+s + 1/2\; t [r,s]+ ...})\;.$$
If we accept, as mentioned previously, time-dependent $r$-s, the problem of the vertical composition of homotopies is solved in the considered {\small DG} situation: the integral curve of the composed homotopy of two homotopies $\exp(t V_s)$ (resp., $\exp(t V_r)$) between morphisms $f,g$ (resp., $g,h$) is given by

$$c(t)=(\exp(t V_r)\circ \exp(t V_s))(f) = \exp (t V_{r+s + 1/2\; t [r,s]+ ...})(f)\;.$$

Note that this vertical composition is not associative. Moreover, the preceding approach does not go through in the infinity situation (note e.g. that in this case $L_0$ is no longer a Lie algebra). This again points out that homotopy algebras form infinity categories.

\subsection{Shortcut to infinity categories}\label{InftyCatIntro}

This subsection is a short digression that should allow us to grasp the spirit of infinity categories. For additional information, we refer the reader to \cite{Gro10}, \cite{Fin11} and \cite{Nog12}.\medskip


{\it Strict $n$-categories} or {\it strict $\zw$-categories} (in the sense of strict infinity categories) are well understood, see e.g. \cite{KMP11}. Roughly, they are made up by 0-morphisms (objects), 1-morphisms (morphisms between objects), 2-morphisms (homotopies between morphisms)..., up to $n$-morphisms, except in the $\zw$-case, where this upper bound does not exist. All these morphisms can be composed in various ways, the compositions being associative, admitting identities, etc. However, in most cases of higher categories these defining relations do not hold strictly. A number of concepts of weak infinity category, e.g. infinity categories in which the structural relations hold up to coherent higher homotopy, are developed in literature. Moreover, an $(\infty,r)$-category is an infinity category, with the additional requirement that all $j$-morphisms, $j>r$, be invertible. In this subsection, we actually confine ourselves to {\it $(\infty,1)$-categories, which we simply call $\infty$-categories}.

\subsubsection{First examples}

\begin{ex} {\it $\infty$-categories should include ordinary categories}.\end{ex}

There is another natural example of infinity category. When considering all the paths in $T\in{\tt Top}$, up to homotopy (for fixed initial and final points), we obtain the {\it fundamental groupoid} $\zP_1(T)$ of $T$. Remember that the usual `half-time' composition of paths is not associative, whereas the induced composition of homotopy classes is. Hence, $\zP_1(T)$, with the points of $T$ as objects and the homotopy classes of paths as morphisms, is (really) a category in which all morphisms are invertible. To encode more information about $T$, we can use a 2-category, the {fundamental 2-groupoid} $\zP_2(T)$, whose 0-morphisms (resp., 1-morphisms, 2-morphisms) are the points of $T$ (resp., the paths between points, the homotopy classes of homotopies between paths), the composition of 1-morphisms being associative only up to a 2-isomorphism. More generally, we define the {\it fundamental $k$-groupoid} $\zP_k(T)$, in which associativity of order $j\le k-1$ holds up to a $(j+1)$-isomorphism. Of course, if we increase $k$, we grasp more and more information about $T$. The {\it homotopy principle} says that the weak fundamental infinity groupoid $\zP_{\infty}(T)$ recognizes $T$, or, more precisely, that {\it $(\infty,0)$-categories are the same as topological spaces}. Hence,

\begin{ex} $\infty$-categories, i.e. $(\infty,1)$-categories, should contain topological spaces.\end{ex}

\subsubsection{Kan complexes, quasi-categories, nerves of groupoids and of categories}

Let us recall that the {\it nerve functor} $N:{\tt Cat}\rightarrow {\tt SSet}$, provides a fully faithful embedding of the category ${\tt Cat}$ of all (small) categories into ${\tt SSet}$ and remembers not only the objects and morphisms, but also the compositions. It associates to any ${\tt C}\in{\tt Cat}$ the simplicial set $$(N{\tt C})_n=\{C_0\to C_1\to\ldots\to C_n\}\;,$$ where the sequence in the {\small RHS} is a sequence of composable {\tt C}-morphisms between objects $C_i\in{\tt C}$; the face (resp., the degeneracy) maps are the compositions and insertions of identities. Let us also recall that the {\it $r$-horn} $\zL^r[n]$, $0\le r\le n$, of $\zD[n]$ is `the part of the boundary of $\zD[n]$ that is obtained by suppressing the interior of the $(n-1)$-face opposite to $r$'. More precisely, the $r$-horn $\zL^r[n]$ is the simplicial set, whose nondegenerate $k$-simplices are the injective order-respecting maps $[k]\to [n]$, except the identity and the map $\zd^r:[n-1]\rightarrowtail [n]$ whose image does not contain $r$.\medskip

We now detail four different situations based on the properties `Any (inner) horn admits a (unique) filler'.

\begin{defi} A simplicial set $S\in{\tt SSet}$ is \emph{fibrant} and called a \emph{Kan complex}, if the map $S\to \star$, where $\star$ denotes the terminal object, is a Kan fibration, i.e. has the right lifting property with respect to all canonical inclusions $\zL^r[n]\subset \zD[n]$, $0\le r\le n$, $n>0$. In other words, $S$ is a Kan complex, if {any horn} $\zL^r[n]\to S$ can be extended to an $n$-simplex $\zD[n]\to S$, i.e. if {\sf any horn in $S$ admits a filler}.\end{defi}

The following result is well-known and explains that a simplicial set is a nerve under a quite similar extension condition.

\begin{prop} A simplicial set $S$ is the nerve $S\simeq N{\tt C}$ of some category ${\tt C}$, if and only if {\sf any inner horn $\zL^r[n]\to S$, $0<r<n$, has a unique filler $\zD[n]\to S$}. \end{prop}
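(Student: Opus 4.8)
The plan is to prove both implications separately. For the ``only if'' direction, suppose $S \simeq N{\tt C}$ for some category ${\tt C}$. An inner horn $\Lambda^r[n] \to S$ with $0 < r < n$ consists of a compatible family of simplices; unpacking this, it amounts to a sequence of composable ${\tt C}$-morphisms $C_0 \to C_1 \to \ldots \to C_n$ with one interior object-to-object arrow, namely the edge $\{r-1 \to r+1\}$, left unspecified, but with the edges $\{r-1 \to r\}$ and $\{r \to r+1\}$ prescribed. (The key point is that $0 < r < n$ guarantees both neighbours $r-1$ and $r+1$ lie in $\{0,\ldots,n\}$, so the two composable arrows adjacent to the missing one are present.) The unique filler is then forced: the missing edge must be the composite of those two arrows, and all higher-dimensional faces of $\Delta[n]$ are determined because in a nerve every simplex is uniquely reconstructed from its spine (the chain of edges $\{0\to 1\}, \{1\to 2\}, \ldots, \{n-1\to n\}$). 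So existence and uniqueness of the filler follow from associativity in ${\tt C}$ together with the fact that $(N{\tt C})_n$ is in bijection with length-$n$ composable chains.

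\textbf{Converse direction.} For the ``if'' direction, assume $S$ has a unique filler for every inner horn. I would reconstruct ${\tt C}$ directly: objects are the $0$-simplices $S_0$, morphisms from $C$ to $C'$ are the $1$-simplices $f \in S_1$ with $d_1 f = C$ and $d_0 f = C'$, and identities are the degenerate simplices $s_0 C$. To define composition of $f : C_0 \to C_1$ and $g : C_1 \to C_2$, note that $f$ and $g$ together with the degeneracies determine a map $\Lambda^1[2] \to S$; its unique filler $\sigma \in S_2$ has a face $d_1 \sigma =: g \circ f$, which we declare to be the composite. Well-definedness is immediate from uniqueness of the filler. One then checks the unit laws (using that $s_0$ and $s_1$ of a $1$-simplex are the canonical fillers of the corresponding horns) and associativity. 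Associativity is the substantive point: given three composable arrows, one assembles a map $\Lambda^1[3] \to S$ (or $\Lambda^2[3]$), fills it uniquely to a $3$-simplex, and reads off from its faces that $(h\circ g)\circ f = h \circ (g \circ f)$; the uniqueness clause is exactly what forces the two bracketings to agree. Finally I would verify that $N{\tt C}$ is isomorphic to $S$ as a simplicial set: there is an obvious map $S \to N{\tt C}$ sending an $n$-simplex to its spine (read off as a composable chain via the $1$-faces $\{i \to i+1\}$), and the unique-inner-horn-filler hypothesis is precisely what makes this map bijective in each degree — surjectivity because any composable chain can be filled up to a simplex by repeatedly filling inner horns, injectivity because two $n$-simplices with the same spine must coincide by an induction on $n$ that fills the relevant inner horns uniquely.

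\textbf{Main obstacle.} The routine parts are the bookkeeping identifying horns with partial composable chains and the verification of the unit axioms. The genuinely delicate step is the inductive argument showing that an $n$-simplex of $S$ is determined by, and freely constructible from, its spine — equivalently that $S \to N{\tt C}$ is an isomorphism. This requires a careful induction on $n$ in which at each stage one fills a sequence of inner horns $\Lambda^r[k] \subset \Delta[k]$ for various $1 \le r \le k-1$ and checks that the resulting simplices glue; the uniqueness clause must be invoked repeatedly to see that the construction is independent of the order in which horns are filled. This is the place where one must be most careful, since it is exactly the failure of uniqueness (not of existence) that distinguishes nerves of categories from nerves of higher categories or from quasi-categories in general. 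I will also need to observe that for $n = 2$ the condition already yields a genuine (associative, unital) composition, and that no condition on outer horns $\Lambda^0[n]$, $\Lambda^n[n]$ is needed or available — indeed requiring those too would force every morphism to be invertible, recovering the Kan-complex case.
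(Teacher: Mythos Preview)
Your proposal is correct and in fact far more complete than what the paper offers. The paper does not prove this proposition at all: it introduces it with ``The following result is well-known'' and then merely illustrates the forward direction in the single case $n=2$, remarking that an inner horn $\Lambda^1[2]\to N{\tt C}$ amounts to two composable morphisms $f,g$ and that the unique filler is given by the edge $h=g\circ f$ together with the identity $2$-cell. No argument for general $n$ is given, and the converse direction (reconstructing a category from a simplicial set with unique inner-horn fillers, and identifying $S$ with its nerve) is omitted entirely.

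So the comparison is simple: you supply an actual proof sketch --- defining ${\tt C}$ from $S_0$ and $S_1$, extracting composition from the unique $\Lambda^1[2]$-filler, deriving associativity from unique $\Lambda^r[3]$-fillers, and arguing by induction that the spine determines an $n$-simplex --- whereas the paper treats the result as folklore and moves on. Your identification of the ``spine determines the simplex'' induction as the delicate step is accurate and is exactly the content the paper suppresses.
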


Indeed, it is quite obvious that for $S=N{\tt C}\in{\tt SSet}$, an inner horn $\zL^1[2]\to N{\tt C}$, i.e. two {\tt C}-morphisms $f: C_0\to C_1$ and $g:C_1\to C_2$, has a unique filler $\zD[2]\to N{\tt C}$, given by the edge $h=g\circ f:C_0\to C_2$ and the `homotopy' $\id:h\Rightarrow g\circ f$ (1).\smallskip

As for Kan complexes $S\in{\tt SSet}$, the filler property for an outer horn $\zL^0[2]\to S$ (resp., $\zL^2[2]\to S$) implies for instance that a horn $f:s_0\to s_1$, $\id:s_0\to s_2=s_0$ (resp., $\id: s'_0\to s'_2=s_0'$, $g: s'_1\to s'_2$) has a filler, so that any map has a `left (resp., right) inverse' (2).\medskip

It is clear that simplicial sets $S_0,S_1,S_2,\,\ldots$ are candidates for $\infty$-categories. In view of the last remark (2), Kan complexes model $\infty$-groupoids. Hence, fillers for outer horns should be omitted in the definition of $\infty$-categories. On the other hand, $\infty$-categories do contain homotopies $\zh: h\Rightarrow g\circ f$, so that, due to (1), uniqueness of fillers is to be omitted as well. Hence, the

\begin{defi} A simplicial set $S\in{\tt SSet}$ is an \emph{$\infty$-category} if and only if {\sf any inner horn $\zL^r[n]\to S$, $0<r<n$, admits a filler $\zD[n]\to S$}. \end{defi}

We now also understand the

\begin{prop} A simplicial set $S$ is the nerve $S\simeq N{\tt G}$ of some groupoid ${\tt G}$, if and only if {\sf any horn $\zL^r[n]\to S$, $0\le r\le n$, $n>0$, has a unique filler $\zD[n]\to S$}.\end{prop}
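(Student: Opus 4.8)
The plan is to adapt the standard characterization of nerves of categories to the groupoid case, using the two preceding propositions as building blocks. First I would prove the forward direction: if $S\simeq N{\tt G}$ for a groupoid ${\tt G}$, then since ${\tt G}$ is in particular a category, the preceding proposition gives unique fillers for all \emph{inner} horns. It remains to treat the outer horns $\zL^0[n]$ and $\zL^n[n]$. For $n=2$, an outer horn $\zL^0[2]\to S$ consists of morphisms $f:C_0\to C_1$ and $h:C_0\to C_2$; the unique filler must be the $2$-simplex with third edge $g:C_1\to C_2$ satisfying $g\circ f=h$, and invertibility of $f$ in ${\tt G}$ forces $g=h\circ f^{-1}$, which exists and is unique. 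The case $\zL^2[2]$ is symmetric, using invertibility on the other side. For $n>2$ one argues that the missing face is determined: all the other faces are already present in the horn, they agree on shared sub-faces, and the simplicial identities together with the $n=2$ analysis (applied to the relevant $2$-dimensional sub-horns of the missing face) pin down the remaining face uniquely; concretely, the missing vertex-to-vertex edges are all already in the horn, and compatibility of compositions in the groupoid does the rest.

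For the converse, suppose every horn $\zL^r[n]\to S$, $0\le r\le n$, $n>0$, has a unique filler. In particular every inner horn has a unique filler, so by the preceding proposition $S\simeq N{\tt C}$ for a unique (up to iso) category ${\tt C}$. I would then show ${\tt C}$ is a groupoid, i.e. every morphism $f:C_0\to C_1$ is invertible. Apply the unique-filler hypothesis to the outer horn $\zL^2[2]\to S$ given by $f:C_0\to C_1$ on the edge $\{0,1\}$ and $\id_{C_1}$ on the edge $\{1,2\}$ (so the target vertex $C_2=C_1$); the filler is a $2$-simplex whose remaining edge $\{0,2\}$ is a morphism $g:C_0\to C_1$... wait, that is the wrong horn shape. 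More carefully: to produce a left inverse of $f$, use the outer horn $\zL^0[2]$ with edge $\{0,1\}$ equal to $f:C_0\to C_1$ and edge $\{0,2\}$ equal to $\id_{C_0}$; its unique filler has edge $\{1,2\}$ a morphism $g:C_1\to C_0$ with $g\circ f=\id_{C_0}$. Symmetrically, the outer horn $\zL^2[2]$ with edge $\{1,2\}$ equal to $f$ and edge $\{0,2\}$ equal to $\id_{C_1}$ yields $h:C_1\to C_0$ with $f\circ h=\id_{C_1}$. Then the usual argument $g=g\circ\id=g\circ f\circ h=\id\circ h=h$ shows $f$ is invertible, so ${\tt C}$ is a groupoid ${\tt G}$ and $S\simeq N{\tt G}$.

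The main obstacle I anticipate is the bookkeeping in the forward direction for $n>2$: one must verify that the unique filler of an outer horn $\zL^0[n]\to S$ really exists and is unique when $S=N{\tt G}$, which amounts to checking that an $n$-simplex of $N{\tt G}$ — a chain $C_0\xrightarrow{g_1}C_1\to\cdots\to C_n$ — is recoverable from its $0$th horn, i.e. from all faces except $d_0$. The faces $d_1,\ldots,d_n$ of such a chain already record the composites and in particular the morphism $g_1$ appears (inside $d_2$, as part of the chain $C_0\xrightarrow{g_2\circ g_1}C_2\to\cdots$, from which $g_1$ is \emph{not} directly visible). So one genuinely needs invertibility: $d_2$ gives $g_2\circ g_1$, while $d_0$ would give the chain starting at $C_1$, hence $g_2$; knowing $g_2$ and $g_2\circ g_1$ recovers $g_1=g_2^{-1}\circ(g_2\circ g_1)$, and then the whole missing face $d_0$ (the chain $C_1\xrightarrow{g_2}C_2\to\cdots\to C_n$) is determined and does fit. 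Dually for $\zL^n[n]$. Packaging this cleanly — rather than by an induction that risks circularity — is the delicate point; everything else is a direct application of the two propositions already established plus the elementary fact that a morphism with a left and a right inverse is an isomorphism.
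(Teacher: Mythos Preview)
The paper does not actually prove this proposition; it is presented as a well-known fact, with only the heuristic remarks labeled (1) and (2) in the surrounding text as motivation. Your argument is the standard one, and there is nothing substantive in the paper to compare it against --- your converse direction (unique fillers of outer $2$-horns produce one-sided inverses, hence every morphism is an isomorphism) is exactly what remark (2) gestures at.

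One correction to your forward direction for $n\ge 3$: invertibility is \emph{not} needed there, and your ``main obstacle'' is illusory. In the horn $\zL^0[n]$ the face $d_n$ is present, and $d_n$ of a chain $C_0\xrightarrow{g_1}\cdots\xrightarrow{g_n}C_n$ is the chain $C_0\xrightarrow{g_1}\cdots\xrightarrow{g_{n-1}}C_{n-1}$, so $g_1,\ldots,g_{n-1}$ are all directly visible; the remaining arrow $g_n$ is read off from $d_1$. Thus the whole $n$-simplex, and in particular the missing face $d_0$, is determined without inverting anything. Your worry that ``$g_1$ is not directly visible'' comes from looking only at $d_2$ and forgetting $d_n$. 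In fact the nerve of \emph{any} category has unique fillers for all horns $\zL^r[n]$ with $n\ge 3$; the groupoid hypothesis is genuinely used only at $n=2$, which you handled correctly in your first paragraph.
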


Of course, Kan complexes, i.e. $\infty$-groupoids, $(\infty,0)$-categories, or, still, topological spaces, are $\infty$-categories. Moreover, nerves of categories are $\infty$-categories. Hence, the requirement that topological spaces and ordinary categories should be $\infty$-categories are satisfied. Note further that what we just defined is a model for $\infty$-categories called {\it quasi-categories} or {\it weak Kan complexes}.

\subsubsection{Link with the intuitive picture of an infinity category}\label{InftyCatComp}

In the following, we explain that the preceding model of an $\infty$-category actually corresponds to the intuitive picture of an $(\infty,1)$-category, i.e. that in an $\infty$-category all types of morphisms do exist, that all $j$-morphisms, $j>1$, are invertible, and that composition of morphisms is defined and is associative only up to homotopy. This will be illustrated by showing that any $\infty$-category has a homotopy category, which is an ordinary category.\medskip

We denote simplicial sets by $S, S',\ldots$, categories by ${\tt C},{\tt D},\dots$, and $\infty$-categories by ${\tt S},{\tt S'},\ldots$\medskip

Let {\tt S} be an $\infty$-category. Its {\it 0-morphisms} are the elements of ${\tt S}_0$ and its {\it 1-morphisms} are the elements of ${\tt S}_1$. The {\it source} and {\it target} maps $\zs,\zt$ are defined, for any 1-morphism $f\in {\tt S}_1$, by $\zs f=d_1f\in{\tt S}_0$, $\zt f=d_0f\in{\tt S}_0$, and the {\it identity map} is defined, for any 0-morphism $s\in {\tt S}_0$, by $\id_s=s_0s\in {\tt S}_1$, with self-explaining notation. In the following, we denote a 1-morphism $f$ with source $s$ and target $s'$ by $f:s\to s'$. In view of the simplicial relations, we have $\zs \id_s=d_1s_0s=s$ and $\zt \id_s=d_0s_0s=s$, so that $\id_s:s\to s$.\medskip

Consider now two morphisms $f:s\to s'$ and $g:s'\to s''$. They define an inner horn $\zL^1[2]\to {\tt S}$, which, as {\tt S} is an $\infty$-category, admits a filler $\zf:\zD[2]\to {\tt S}$, or $\zf\in {\tt S}_2$. The face $d_1\zf\in{\tt S}_1$ is of course a candidate for the composite $g\circ f.$

\begin{rem}\label{CompInftyCat} Since the face $h:=d_1\zf$ of any filler $\zf$ is a (candidate for the) composite $g\circ f$, composites of morphisms are in $\infty$-categories not uniquely defined. We will show that they are determined only up to `homotopy'.\end{rem}

\begin{defi}\label{2MorphInftyCat} Let {\tt S} be an $\infty$-category and let $f,g:s\to s'$ be two morphisms. A \emph{2-morphism} or \emph{homotopy} $\zf:f\Rightarrow g$ between $f$ and $g$ is an element $\zf\in{\tt S}_2$ such that $d_0\zf=g, d_1\zf=f, d_2\zf=\id_{s}$.\end{defi}

Indeed, if there exists such a 2-simplex $\zf$, there are two candidates for the composite $g\circ \id_{s}$, namely $f$ and, of course, $g$. If we wish now that all the candidates be homotopic, the existence of $\zf$ must entail that $f$ and $g$ are homotopic -- which is the case in view of Definition \ref{2MorphInftyCat}. If $f$ is homotopic to $g$, we write $f\simeq g$.

\begin{prop} The homotopy relation $\simeq$ is an equivalence in ${\tt S}_1$.\end{prop}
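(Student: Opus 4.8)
The plan is to verify the three properties of an equivalence relation -- reflexivity, symmetry, transitivity -- on the set ${\tt S}_1$ of $1$-morphisms, using in each case the inner-horn filler property of the $\infty$-category ${\tt S}$ together with the simplicial identities. All three will be proved by the same technique: write down an explicit inner horn $\zL^r[n]\to {\tt S}$ built from the data at hand (the degenerate $2$-simplices $s_0 f$, $s_1 f$ and the given homotopies), fill it, and read off the required homotopy from a face of the filler.

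\textbf{Reflexivity.} First I would show $f\simeq f$ for any $f:s\to s'$. The natural candidate witness is the degenerate $2$-simplex $\zf:=s_1 f\in{\tt S}_2$. Using the simplicial identities $d_0 s_1=\id$, $d_1 s_1=\id$, $d_2 s_1 = s_0 d_1$, we get $d_0\zf=f$, $d_1\zf=f$, and $d_2\zf=s_0 d_1 f=s_0(\zs f)=\id_s$; this is exactly the condition of Definition~\ref{2MorphInftyCat} for a homotopy $f\Rightarrow f$, so reflexivity holds without even invoking the filler property.

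\textbf{Symmetry and transitivity.} For symmetry, suppose $\zf:f\Rightarrow g$, so $d_0\zf=g$, $d_1\zf=f$, $d_2\zf=\id_s$. I would assemble a $\zL^1[3]\to{\tt S}$ whose $2$-faces are chosen to be $d_0\to s_0 g$ (the degeneracy exhibiting $g\simeq g$ trivially), $\zf$ for the face opposite vertex $2$ or $3$ as the simplicial bookkeeping dictates, and a degenerate simplex $\id_s$-type face for the remaining slot, leaving out the inner face $d_1$ (the one opposite vertex $1$); the three prescribed faces are compatible on their common edges precisely because $\zf$'s edges are $f$, $g$, $\id_s$. Filling this inner horn yields $\Psi\in{\tt S}_3$ whose missing face $d_1\Psi$ is, after checking its own faces via the simplicial identities, a homotopy $g\Rightarrow f$; hence $g\simeq f$. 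Transitivity is entirely analogous: given $\zf:f\Rightarrow g$ and $\psi:g\Rightarrow h$, I would build a $\zL^1[3]\to{\tt S}$ with faces (roughly) $\psi$, a degeneracy $s_0(\cdot)$ encoding $\id_s\circ\id_s=\id_s$, and $\zf$, omitting the inner $d_1$-face; a filler $\Phi\in{\tt S}_3$ then has $d_1\Phi$ a $2$-simplex with $d_0(d_1\Phi)=h$, $d_1(d_1\Phi)=f$, $d_2(d_1\Phi)=\id_s$, i.e.\ a homotopy $f\Rightarrow h$, so $f\simeq h$.

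\textbf{Main obstacle.} The only delicate point -- and the step I expect to consume the real work -- is the combinatorial bookkeeping: choosing \emph{which} three $2$-faces of the $3$-simplex to prescribe and \emph{which} inner horn ($\zL^1[3]$ or $\zL^2[3]$) they span, then checking via the simplicial identities $d_i d_j = d_{j-1} d_i$ (for $i<j$) that the prescribed faces agree on shared edges so that they genuinely assemble into a horn, and finally verifying that the extracted face $d_1\Phi$ of the filler satisfies exactly the face conditions of Definition~\ref{2MorphInftyCat}. None of this is conceptually hard, but it is the classical ``horn-juggling'' that makes the argument go through; once the right horn is identified in each case, the filler property of the $\infty$-category ${\tt S}$ does the rest.
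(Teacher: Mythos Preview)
Your overall strategy---reflexivity via a degenerate $2$-simplex, symmetry and transitivity via filling an inner $3$-horn---is exactly the paper's approach. However, your reflexivity witness is wrong: you take $\zf=s_1 f$ and claim $d_0 s_1=\id$, $d_1 s_1=\id$, $d_2 s_1=s_0 d_1$. These are the identities for $s_0$, not $s_1$. The correct relations give $d_0 s_1 f = s_0 d_0 f = \id_{s'}$, $d_1 s_1 f = f$, $d_2 s_1 f = f$, so $s_1 f$ does \emph{not} satisfy Definition~\ref{2MorphInftyCat} for a homotopy $f\Rightarrow f$. The paper instead takes $\id_f:=s_0 f$, for which $d_0 s_0 f=f$, $d_1 s_0 f=f$, $d_2 s_0 f=s_0 d_1 f=\id_s$, as required.

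For symmetry the paper fills a $\zL^2[3]$ rather than a $\zL^1[3]$: it prescribes $d_0\psi=\zf$, $d_1\psi=\id_g\,(=s_0 g)$, $d_3\psi=\id_{\id_s}=:\id_s^2$, and reads off $d_2\Psi$ as the homotopy $g\Rightarrow f$. Your $\zL^1[3]$ variant can be made to work too, but your description is too loose to check the edge compatibilities; in particular you will need $\id_f=s_0 f$ (not $s_0 g$) among the faces if you want the missing $d_1$-face to have $d_0=f$. For transitivity the paper (a few paragraphs later) uses precisely the $\zL^1[3]$ horn $(\zf_2,\bullet,\zf_1,\id_s^2)$ you are gesturing at.
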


\begin{proof} Let $f:s\to s'$ be a morphism and consider $\id_f:=s_0f\in{\tt S}_2$. It follows from the simplicial relations that $d_0\id_f=f,d_1\id_f=f,d_2\id_f=s_0s=\id_s$, so that $\id_f$ is a homotopy between $f$ and $f$. To prove that $\simeq$ is symmetric, let $f,g:s\to s'$ and assume that $\zf$ is a homotopy from $f$ to $g$. We then have an inner horn ${\psi}:\zL^2[3]\to {\tt S}$ such that $d_0\psi=\zf$, $d_1\psi=\id_g$, and $d_3\psi=\id_{\id_s}=:\id^2_s$. The face $d_2\Psi$ of a filler $\Psi:\zD[3]\to {\tt S}$ is a homotopy from $g$ to $f$. Transitivity can be obtained similarly. \end{proof}

\begin{defi} The \emph{homotopy category} ${\tt Ho}(\tt S)$ of an $\infty$-category $\,{\tt S}$ is the (ordinary) category with objects the objects $s\in {\tt S}_0$, with morphisms the homotopy classes $[f]$ of morphisms $f\in {\tt S}_1$, with composition $[g]\circ [f]=[g\circ f]$, where $g\circ f$ is any candidate for the composite in ${\tt S}$, and with identities $\op{Id}_s=[\id_s]$.\end{defi}

To check that this definition makes sense, we must in particular show that all composites $g\circ f$, see Remark \ref{CompInftyCat}, are homotopic. Let thus $\zf_1,\zf_2\in{\tt S}_2$ be two 2-simplices such that $(d_0\zf_1,d_1\zf_1,d_2\zf_1)=(g,h_1,f)$ and $(d_0\zf_2,d_1\zf_2,d_2\zf_2)=(g,h_2,f)$, so that $h_1$ and $h_2$ are two candidates. Consider now for instance the inner horn $\psi:\zL^2[3]\to {\tt S}$ given by $\psi=(\zf_1,\zf_2,\bullet,\id_f)$. The face $d_2\Psi$ of a filler $\Psi:\zD[3]\to {\tt S}$ is then a homotopy from $h_2$ to $h_1$. To prove that the composition of morphisms in ${\tt Ho}({\tt S})$ is associative, one shows that candidates for $h\circ (g\circ f)$ and for $(h\circ g)\circ f$ are homotopic (we will prove neither this fact, nor the additional requirements for ${\tt Ho}({\tt S})$ to be a category).

\begin{rem} It follows that in an $\infty$-category composition of morphisms is defined and associative only up to homotopy.\end{rem}

We now comment on higher morphisms in $\infty$-categories, on their composites, as well as on invertibility of $j$-morphisms, $j>1$.

\begin{defi} Let $\zf_1:f\Rightarrow g$ and $\zf_2:f\Rightarrow g$ be 2-morphisms between morphisms $f,g:s\to s'$. A \emph{3-morphism} $\Phi:\zf_1\Rrightarrow \zf_2$ is an element $\Phi\in{\tt S}_3$ such that $d_0\Phi= \id_g$, $d_1\Phi =\zf_2$, $d_2\Phi =\zf_1$, and $d_3\Phi= \id^2_s$.\end{defi} Roughly, a 3-morphism is a 3-simplex with faces given by sources and targets, as well as by identities. Higher morphisms are defined similarly \cite{Gro10}.\medskip

As concerns composition and invertibility, let us come back to transitivity of the homotopy relation. There we are given 2-morphisms $\zf_1:f\Rightarrow g$ and $\zf_2:g\Rightarrow h$, and must consider the inner horn $\psi=(\zf_2,\bullet,\zf_1,\id^2_s)$. The face $d_1\Psi$ of a filler $\Psi$ is a homotopy between $f$ and $h$ and is a candidate for the composite $\zf_2\circ\zf_1$ of the 2-morphisms $\zf_1,\zf_2$. If we now look again at the proof of symmetry of the homotopy relation and denote the homotopy from $g$ to $f$ by $\psi'$, we see that $\psi\circ\psi'\simeq\id_g$. We obtain similarly that $\psi'\circ\psi\simeq\id_f$, so that 2-morphisms are `invertible'.

\begin{rem} Eventually, all the requirements of the intuitive picture of an $\infty$-category are encoded in the existence of fillers of inner horns.\end{rem}

\subsection{Infinity groupoid of infinity morphisms between Leibniz infinity algebras}\label{sec_VcoH}

\subsubsection{Quasi-category of homotopy Leibniz algebras}\label{InftyCatInftyAlg}

Let $\zW^\star_\bullet$ be the {\small SDGCA} introduced in Subsection \ref{SimpicialDR}. The `Yoneda embedding' of $\zW^\star_\bullet$ viewed as object of ${\tt SSet}$ and ${\tt DGCA}$, respectively, gives rise to an adjunction that is well-known in Rational Homotopy Theory: $$\zW^\star:{\tt SSet}\rightleftarrows {\tt DGCA}^{\op{op}}:\op{Spec}_{\bullet}\;.$$ The functor $\zW^\star=\h_{\tt SSet}(-,\zW^\star_\bullet)=:\op{SSet}(-,\zW^\star_\bullet)$ associates to any $S_\bullet\in{\tt SSet}$ its Sullivan {\small DGCA} $\zW^\star(S_\bullet)$ of piecewise polynomial differential forms, whereas the functor $\op{Spec}_{\bullet}=\h_{\tt DGCA}(-,\zW^\star_\bullet)$ assigns to any $A\in {\tt DGCA}$ its simplicial spectrum $\op{Spec}_{\bullet}(A)$.\medskip

Remember now that an $\infty$-homotopy between $\infty$-morphisms between two Leibniz infinity algebras $V,W$, is an element in $\op{MC}_1({\bar L})=\op{MC}(L\0\zW^\star_1)$, where $L=L(V,W)$.\medskip

The latter set is well-known from integration of $L_{\infty}$-algebras. Indeed, when looking for an integrating topological space or simplicial set of a positively graded $L_{\infty}$-algebra $L$ of finite type (degree-wise of finite dimension), it is natural to consider the simplicial spectrum of the corresponding quasi-free {\small DGCA} $\op{Com}(s^{-1}L^*)$. The dual of Equation (\ref{NatBijCoAlg}) yields $$\op{Spec}_\bullet(\op{Com}(s^{-1}L^*))=\h_{\tt DGCA}(\op{Com}(s^{-1}L^*),\zW^\star_\bullet)\simeq \op{MC}(L\0 \zW^\star_\bullet)\;.$$ The integrating simplicial set of a nilpotent $L_{\infty}$-algebra $L$ is actually homotopy equivalent to $\op{MC}_\bullet({\bar L}):=\op{MC}(L\0\zW^\star_\bullet)$ \cite{Get09}. It is clear that the structure maps of $\op{MC}_\bullet({\bar L})\subset L\0\zW^\star_\bullet$ are $\tilde{d}^n_i=\id\0 d^n_i$ and $\tilde{s}^n_i=\id\0 s^n_i$, where $d^n_i$ and $s^n_i$ were described in Subsection \ref{SimpicialDR}.\medskip

Higher homotopies ($n$-homotopies) are usually defined along the same lines as standard homotopies (1-homotopies), i.e., e.g., as arrows depending on parameters in $I^{\times n}$ (or $\zD^n$) instead of $I$ (or $\zD^1$) \cite{Lei03}. Hence,

\begin{defi} \emph{$\infty$-$n$-homotopies} $($\emph{$\infty$-$(n+1)$-morphisms}$)$ between given Leibniz infinity algebras $V,W$ are Maurer-Cartan elements in $\op{MC}_n(\bar L)=\op{MC}(L\0\zW^\star_n)$, where $L=L(V,W)$ and $n\ge 0$.\label{InftyNHomot}\end{defi}

Indeed, $\infty$-1-morphisms are just elements of $\op{MC}(L)$, i.e. standard $\infty$-morphisms between $V$ and $W$.\medskip

Note that if $\tt S$ is an $\infty$-category, the set of $n$-morphisms, with varying $n\ge 1$, between two fixed objects $s,s'\in{\tt S}_0$ can be shown to be a Kan complex \cite{Gro10}. The simplicial set $\op{MC}_\bullet(\bar L)$, whose $(n-1)$-simplices are the $\infty$-$n$-morphisms between the considered Leibniz infinity algebras $V,W$, $n\ge 1$, is known to be a Kan complex ($(\infty,0)$-category) as well \cite{Get09}.

\begin{rem} We interpret this result by saying that Leibniz infinity algebras and their infinity higher morphisms form an $\infty$-category $($$(\infty,1)$-category$)$. Further, as mentioned above and detailed below, composition of homotopies is encrypted in the Kan property.\end{rem}

Note that $\op{MC}_\bullet(\bar L)$ actually corresponds to the `d\'ecalage', the `down-shifting', of the simplicial set $\tt S$.\smallskip

Let us also \emph{emphasize} that Getzler's results are valid only for nilpotent $L_{\infty}$-algebras, hence in principle not for $L$, which is only complete (an $L_{\infty}$-algebra is \emph{pronilpotent}, if it is complete with respect to its lower central series, i.e. the intersection of all its compatible filtrations, and it is \emph{nilpotent}, if its lower central series eventually vanishes). However, for our remaining concern, namely the explanation of homotopies and their compositions in the 2-term Leibniz infinity algebra case, this difficulty is irrelevant. Indeed, when interpreting the involved series as formal ones and applying the thus obtained results to the 2-term case, where series become finite for degree reasons, we recover the results on homotopies and their compositions conjectured in \cite{BC04}. An entirely rigorous approach to these issues is being examined in a separate paper: it is rather technical and requires applying Henriques' method or working over an arbitrary local Artinian algebra.






\subsubsection{Kan property}\label{Kan property}

Considering our next purposes, we now review and specify the proof of the Kan property of $\op{MC}_\bullet(\bar L)$ \cite{Get09}. As announced above, to facilitate comparison, we adopt the conventions of the latter paper and apply the results mutatis mutandis and formally to our situation. In particular, we work in \ref{Kan property} with the cohomological version of Lie infinity algebras ($k$-ary bracket of degree $2-k$), together with Getzler's sign convention for the higher Jacobi conditions, see \ref{Gauge}, and assume that $\K=\R$.\medskip

Let us first recall that the lower central filtration of $(L,{\cal L}_i)$ is given by $F^1L=L$ and
$$F^iL=\sum_{i_1+\cdots+i_k=i}{\cal L}_k(F^{i_1}L,\cdots,F^{i_k}L),~~i>1\;.$$ In particular, $F^2L={\cal L}_2(L,L)$, $F^3L={\cal L}_2(L,{\cal L}_2(L,L))+{\cal L}_3(L,L,L)$, ..., so that $F^kL$ is spanned by all the nested brackets containing $k$ elements of $L$. Due to nilpotency, $F^iL=\{0\}$, for $i\gg$.\medskip

To simplify notation, let $\zd$ be the differential ${\cal L}_1$ of $L$, let $d$ be the de Rham differential of $\zW^\star_n=\zW^\star(\zD^n)$, and let $\bar\zd+\bar d$ be the differential $\bar{\cal L}_1=\delta\0 \id+\id\0 d$ of $L\0\zW^\star(\zD^n)$. Set now, for any $n\ge 0$ and any $0\le i\le n$,
$$\op{mc}_n(\bar L):=\{(\bar\zd+\bar d)\beta:\beta \in (L\otimes\Omega^{\star}(\zD^n))^0\}\;\text{and}\; \op{mc}_n^i(\bar L):=\{(\bar\zd+\bar d)\beta:\beta \in (L\otimes\Omega^{\star}(\zD^n))^0, \;\bar\ze^i_n\zb=0\}\;,$$ where $\bar\ze_n^i:=\id\0 \ze_n^i$ is the canonical extension of the evaluation map $\ze_n^i:\zW^\star(\zD^n)\to \K$, see \ref{SimpicialDR}. \begin{rem} In the following, we use the extension symbol `bar' only when needed for clarity.\end{rem}

$\bullet\quad$  There exist fundamental bijections \bea B^i_n: \op{MC}_n(\bar L)\stackrel{\sim}{\longrightarrow} \op{MC}(L)\times \op{mc}_n^i(\bar L)\subset \op{MC}(L)\times\op{mc}_n(\bar L)\label{important_bijection}\;.\eea

The proof uses the operators $$h_n^i:~\Omega^{\star}(\Delta^n)\to \zW^{\star-1}(\Delta^n)$$ defined as follows. Let $\vec t=[t_0,\ldots, t_n]$ be the coordinates of $\Delta^n$ (with $\sum_i t_i=1$) and consider the maps $\phi_n^i:~I\times\Delta^n\ni (u,\vec t)\mapsto u\vec t+(1-u)\vec e_i\in \Delta^n$. They allow to pull back a polynomial differential form on $\Delta^n$ to a polynomial differential form on $I\times \Delta^n$. The operators $h_n^i$ are now given by \bea h_n^i\go=\int_I~(\phi_n^i)^*\go\;.\nn\eea They satisfy the relations
\bea \{d,h_n^i\}=\id_n-\ep_n^i,~~~~\{h_n^i,h_n^j\}=0,~~~~~\ep_n^ih_n^i=0\;,\label{ibp_proper}\eea where $\{-,-\}$ is the graded commutator (remember that $\ze_n^i$ vanishes in nonzero cohomological degree). The first relation is a higher dimensional analogue of $$\{d,\int_0^t\}\go=\{d,\int_0^t\}(f(u)+g(u)du)=d\int_0^tg(u)du+\int_0^td_uf\,du=g(t)dt+f(t)-f(0)=\go-\ep_1^0\go\;,$$ where $\zw\in\zW^\star(I).$\medskip

The natural extensions of $d,$ $h_n^i,$ and $\ze_n^i$ to $L\0\zW^\star(\zD^n)$ satisfy the same relations, and, since we obviously have $\zd h_n^i=-\,{h}_n ^i\zd$, the first relation holds in the extended setting also for $d$ replaced by $\zd+d$.\medskip

Define now $B_n^i$ by
\be B_n^i:\op{MC}_n(\bar L)\ni\za\mapsto B_n^i\za:=(\ep_n^i\za,(\gd+d)h_n^i\za)\in \op{MC}(L)\times \op{mc}_n^i(\bar L)\;.\label{BDir}\ee Observe that $\za\in (L\0\zW^\star(\zD^n))^1$ reads $\za=\sum_{k=0}^n\za^k$, $\za^k\in L_{1-k}\0\zW^k(\zD^n)$, so that $\ze_n^i\za=\ze_n^i\za^0\in L_1$. Moreover, it follows from the definition of the extended $L_{\infty}$-maps $\bar{\cal L}_i$ that \be\label{EpsilonMC}\sum_{i\ge 1}\frac{1}{i!}{\cal L}_i(\ze_n^i\za,\ldots,\ze_n^i\za)=\ze_n^i\sum_{i\ge 1}\frac{1}{i!}\bar{\cal L}_i(\za,\ldots,\za)=0\;.\ee In view of the last equation (\ref{ibp_proper}), the second component of $B_n^i\za$ is clearly an element of $\textrm{mc}_n^i(\bar L)$.\medskip

The construction of the inverse map is based upon a method similar to the iterative approximation procedure that allows us to prove the fundamental theorem of {\small ODE}-s. More precisely, consider the Cauchy problem $y\,'(t)=F(t,y(t))$, $y(0)=Y$, i.e. the integral equation $$y(s)=Y+\int_0^sF(t,y(t))dt\;.$$ Choose now the `Ansatz' $y_0(s)=Y$ and define inductively $$y_k(s)=Y+\int_0^sF(t,y_{k-1}(t))dt\;,$$ $k\ge 1$. It is well-known that the $y_k$ converge to a function $y$, which is the unique solution and depends continuously on the initial value $Y$.\medskip

Note now that, if we are given $\mu\in\textrm{MC}(L)$ and $\nu=(\zd+d)\zb\in \textrm{mc}^i_n(\bar L)$, a solution $\za\in\op{MC}_n(\bar L)$ -- i.e. an element $\za\in(L\0\zW^\star(\zD^n))^1$ that satisfies $$(\zd+d)\za+\sum_{i\ge 2}\frac{1}{i!}\bar{\cal L}_i(\za,\ldots,\za)=:(\zd+d)\za+\bar{R}(\za)=0\;\;\;\text{--}\;$$ such that $\ze_n^i\za=\zm$ and $(\zd+d)h_n^i\za=\zn$, also satisfies the integral equation \be\za=\id_n\za=\{\zd+d,h_n^i\}\za+\ze_n^i\za=\zm+\zn+h_n^i(\zd+d)\za=\zm+\zn-h_n^i\bar R(\za)\;.\label{IntEq}\ee We thus choose the `Ansatz' $\ga_0=\mu+\nu$ and set $\ga_{k}=\ga_0-h_n^i\bar R(\za_{k-1}),$ $k\ge 1$. It is easily seen that, in view of nilpotency, this iteration stabilizes, i.e. $\za_{k-1}=\za_k=\ldots =:\ga$, for $k\gg$, or, still, \bea \ga=\ga_0-h_n^i\bar R(\za)\;.\label{stable}\eea The limit $\za$ is actually a solution in $\op{MC}_n(\bar L)$. Indeed, remember first that the generalized curvature
\bea \bar{\cal F}(\ga)=(\gd+d)\ga+\bar{R}(\za)=(\gd+d)\ga+\sum_{i\ge 2}\frac{1}{i!}\bar{\cal L}_i(\ga,\ldots,\ga)\;,\nn\eea
whose zeros are the {\small MC} elements, satisfies, just like the standard curvature, the Bianchi identity
\bea (\gd+d)\bar{\cal F}(\za)+\sum_{k\ge 1}\frac{1}{k!}\bar{\cal L}_{k+1}(\ga,\ldots,\ga,\bar{\cal F}(\za))=0\;.\label{biancchi}\eea
It follows from (\ref{stable}) and (\ref{ibp_proper}) that
\bea \bar{\cal F}(\ga)=(\gd+d)(\ga_0-h_n^i\bar R(\ga))+\bar{R}(\ga)=(\gd+d)\mu+h_n^i(\gd+d)\bar R(\za)+\ep_n^i\bar R(\za)\;.\nn\eea From Equation (\ref{EpsilonMC}) we know that $\ze_n^i\bar R(\za)=R(\ze_n^i\za)=R(\zm)$, with self-explaining notation. As for $\ze_n^i\za=\zm$, note that $\ze_n^i\zm=\zm$ and that $$\ze_n^i\zn=\ze_n^i(\zd+d)\zb=\ze_n^i(\zd+d)\sum_{k=0}^n\zb^k,\; \zb^k\in L_{-k}\0\zW^k(\zD^n),\;\text{ so that }\; \ze_n^i\zn=\ze_n^i\zd\zb^0=\zd\ze_n^i\zb^0=\zd\ze_n^i\zb=0\;.$$ Hence, $$\bar{\cal F}(\ga)=(\gd+d)\mu+h_n^i(\gd+d)(\bar{\cal F}(\ga)-(\gd+d)\ga)+R(\mu)$$ $$={\cal F}(\mu)+h_n^i(\gd+d)\bar{\cal F}(\ga)=-h_n^i\sum_{k\ge 1}\frac{1}{k!}\bar{\cal L}_{k+1}(\ga,\ldots,\ga,\bar{\cal F}(\za))\;,$$ in view of (\ref{biancchi}). Therefore, $\bar{\cal F}(\ga)\in F^i\bar L$, for arbitrarily large $i$, and thus $\za\in\op{MC}_n(\bar L)$. This completes the construction of maps \be\label{BInv}{\cal B}_n^i:\op{MC}(L)\times \op{mc}_n^i(\bar L)\to \op{MC}_n(\bar L)\;.\ee

We already observed that $\ze_n^i{\cal B}_n^i(\zm,\zn)=\ze_n^i\za=\zm$. In fact, $B_n^i{\cal B}_n^i=\id$, so that $B_n^i$ is surjective. Indeed, Equations (\ref{stable}) and (\ref{ibp_proper}) imply that $$(\zd+d)h_n^i\za=-h_n^i(\zd+d)\za_0+\za_0-\ze_n^i\za_0=-h_n^i\zd\zm+\zn=\zn\;.$$ As for injectivity, if $B_n^i\za=B_n^i\za'=:(\zm,\zn)$, then both, $\ga$ and $\ga'$, satisfy Equation (\ref{IntEq}). It is now quite easily seen that nilpotency entails that $\ga=\ga'$.\bigskip

$\bullet\quad$  The bijections $$B_n^i:\op{MC}_n(\bar L)\to \op{MC}(L)\times \op{mc}_n^i(\bar L)$$ allow us to prove the Kan property for $\op{MC}_\bullet(\bar L)$. The extension of a horn in $\op{SSet}(\zL^i[n],\op{MC}_\bullet(\bar L))$ will be performed as sketched by the following diagram:
\bea
\xymatrix{
\textrm{SSet}(\Lambda^i[n],\textrm{MC}_{\bullet}(\bar L)) \ar@{.>}^-{ }[r]\ar@{->}_-{ }[d]&\textrm{MC}_n(\bar L)\\
\textrm{SSet}(\Lambda^i[n],\textrm{MC}(L)\times\textrm{mc}_{\bullet}(\bar L))\ar@{->}^-{}[r]&\textrm{MC}(L)\times\textrm{mc}^i_n(\bar L)\ar@{->}_-{}[u]
}\label{commute_square}\eea

Of course, the right arrow is nothing but ${\cal B}_n^i$.\medskip

$\star\quad$  As for the left arrow, imagine, for simplicity, that $i=1$ and $n=2$, and let $$\za\in \op{SSet}(\zL^1[2], \op{MC}_\bullet(\bar L))\;.$$ The restrictions $\za|_{01}$ and $\za|_{12}$ to the 1-faces $01$, $12$ (compositions of the natural injections with $\za$) are elements in $\op{MC}_1(\bar L)$, so that the map $B_1^1$ sends $\za|_{01}$ to $(\zm,\zn)$ in $\op{MC}(L)\times\op{mc}_1(\bar L)$ (and similarly $B_1^0(\za|_{12})=(\zm',\zn')\in \op{MC}(L)\times\op{mc}_1(\bar L)$). Of course, $\zm=\ze_1^1(\za|_{01})=\ze_1^0(\za|_{12})=\zm'$. Since $\zn=(\zd+d)\zb$ and $\zb(1)=\ze_1^1\zb=0$, we find $\zn(1)=\ze_1^1\zn=0$ (and similarly $\zn'=(\zd+d)\zb'$ and $\zb'(1)=\zn'(1)=0$). Thus, $$(\zm;\zn,\zn')\in\op{SSet}(\zL^1[2],\op{MC}(L)\times\op{mc}_\bullet(\bar L))\;,$$ which explains the left arrow.\medskip

$\star\quad$ For the bottom arrow, let again $i=1,n=2$. Since $\zm$ is constant, it can be extended to the whole simplex. To extend $(\zn,\zn')$, it actually suffices to extend $(\zb,\zb')$. Indeed, restriction obviously commutes with $\zd$. As for commutation with $d$, remember that $\zW^\star:\zD^{\op{op}}\to {\tt DGCA}$ and that the {\tt DGCA}-map $d^2_2=\zW^\star(\zd^2_2)$ sets the component $t_2$ to 0. Hence, $d^2_2$ coincides with restriction to $01$ and commutes with $d$. Let now $\bar\zb$ be an extension of $(\zb,\zb')$. Since $$(d\bar\zb)|_{01}=d^2_2d\bar\zb=dd^2_2\bar\zb=d\zb$$ and similarly $(d\bar\zb)|_{12}=d\zb'$.\medskip

It now remains to explain that an extension $\bar\zb$ does always exist. Consider the slightly more general extension problem of three polynomial differential forms $\zb_0$, $\zb_1,$ and $\zb_2$ defined on the 1-faces $12$, $02$, and $01$ of the 2-simplex $\zD^2$, respectively (it is assumed that they coincide at the vertices). Let $\zp_2:\zD^2\to 01$ be the projection defined, for any $\vec t=[t_0,t_1,t_2]$, as the intersection of the line $u\vec t+(1-u)\vec e_2$ with $01$. This projection is of course ill-defined at $\vec t=\vec e_2$. In coordinates, we get $$\zp_2:[t_0,t_1,t_2]\mapsto [t_0/(1-t_2), t_1/(1-t_2)]\;.$$ It follows that the pullback $\zp_2^*\zb_2$ is a rational differential form with denominator $(1-t_2)^N$, for some integer $N.$ Hence, $$\zg_2:=(1-t_2)^N\zp_2^*\zb_2$$ is a polynomial differential form on $\zD^2$ that coincides with $\zb_2$ on $01$. It now suffices to solve the same extension problem as before, but for the forms $\zb_0-\zg_2|_{12}$, $\zb_1-\zg_2|_{02}$, and $0$. When iterating the procedure -- due to Renshaw \cite{Sul77} --, the problem reduces to the extension of $0,0,0$ (since the pullback preserves 0). This completes the description of the bottom arrow, as well as the proof of the Kan property of $\op{MC}_\bullet(\bar L)$.

\section{2-Category of 2-term Leibniz infinity algebras}

Categorification replaces sets (resp., maps, equations) by categories (resp., functors, natural isomorphisms). In particular, rather than considering two maps as equal, one details a way of identifying them. Categorification is thus a sharpened viewpoint that turned out to provide deeper insight. This motivates the interest in e.g. categorified algebras (and in truncated infinity algebras -- see below).\medskip

Categorified Lie algebras were introduced under the name of Lie 2-algebras in [BC04] and further studied in \cite{Roy07}, \cite{SL10}, and \cite{KMP11}. The main result of \cite{BC04} states that Lie 2-algebras and 2-term Lie infinity algebras form equivalent 2-categories. However, {\bf infinity homotopies of 2-term Lie infinity algebras} (resp., {\bf compositions of such homotopies}) {are not explained, but appear as some God-given natural transformations read through this} {\textsc{equivalence}} (resp., compositions are addressed only in \cite{SS07} and performed in the {\textsc{algebraic or coalgebraic settings}}).\medskip

This circumstance is not satisfactory, and {\bf the attempt to improve our understanding of infinity homotopies and their compositions is one of the main concerns of the present paper}. Indeed, in \cite{KMP11} (resp., \cite{BP12}), the authors show that the {\textsc{equivalence}} between $n$-term Lie infinity algebras and Lie $n$-algebras is, for $n> 2$, not as straightforward as expected -- which is essentially due to the largely ignored fact that the category {\tt Vect} $n${\tt -Cat} of linear $n$-categories is symmetric monoidal, but that the corresponding map $\boxtimes:L\times L'\to L\boxtimes L'$ is not an $n$-functor (resp., that the understanding of a concept in the {\textsc{algebraic framework}} is far from implying its comprehension in the infinity context -- a reality that is corroborated e.g. by the comparison of concordances and infinity homotopies).\medskip

In this section, we obtain {\bf explicit formulae for infinity homotopies and their compositions}, applying the \textsc{Kan property} of $\op{MC}_\bullet(\bar L)$ to the 2-term case, thus staying inside the \textsc{infinity setting}.

\subsection{Category of 2-term Leibniz infinity algebras}

For the sake of completeness, we first describe 2-term Leibniz infinity algebras and their morphisms. Propositions \ref{2term_Loday_in_algebra} and \ref{2term_Loday_morphism} are specializations to the 2-term case of Definitions \ref{LeibInftyAlg} and \ref{LeibInftyAlgMorph}; see also \cite{SL10}. The informed reader may skip the present subsection.

\begin{prop}\label{2term_Loday_in_algebra} A \emph{2-term Leibniz infinity algebra} is a graded vector space $V=V_0\oplus V_1$ concentrated in degrees 0 and 1, together with a linear, a bilinear, and a trilinear map $l_1,$ $l_2,$  and $l_3$ on $V$, of degree $|l_1|=-1,$ $|l_2|=0,$ and $|l_3|=1,$ which verify, for any $w,x,y,z\in V_0$ and $h,k\in V_1$,
\begin{enumerate}[(a)]
\item \label{identity_a} $l_1l_2(x,h)=l_2(x,l_1h)\;,$\\
$l_1l_2(h, x) = l_2(l_1h, x)\;,$
\item \label{identity_b}$l_2(l_1h,k)=l_2(h,l_1k)\;,$
\item \label{identity_c}$l_1l_3(x,y,z)=l_2(x,l_2(y,z))-l_2(y,l_2(x,z))-l_2(l_2(x,y),z)\;,$
\item \label{identity_d}$l_3(x,y,l_1h)=l_2(x,l_2(y,h))-l_2(y,l_2(x,h))-l_2(l_2(x,y),h)\;,$\\
$l_3(x, l_1h, y) = l_2(x, l_2(h, y)) - l_2(h, l_2(x, y)) - l_2(l_2(x,h), y)\;,$\\
$l_3(l_1h, x, y) = l_2(h, l_2(x, y)) - l_2(x, l_2(h, y)) - l_2(l_2(h, x), y)\;,$
\item \label{identity_e}$l_2(l_3(w,x,y),z)+l_2(w,l_3(x,y,z))-l_2(x,l_3(w,y,z))+l_2(y,l_3(w,x,z))-l_3(l_2(w,x),y,z)$\\
$+l_3(w,l_2(x,y),z)-l_3(x,l_2(w,y),z)-l_3(w,x,l_2(y,z))+l_3(w,y,l_2(x,z))-l_3(x,y,l_2(w,z))=0\;.$
\end{enumerate}
\end{prop}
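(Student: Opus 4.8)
The plan is to obtain Proposition \ref{2term_Loday_in_algebra} as a direct specialization of Definition \ref{LeibInftyAlg} to a graded vector space $V=V_0\oplus V_1$ concentrated in degrees $0$ and $1$. First I would observe that, for degree reasons, the only possibly nonzero brackets are $l_1,l_2,l_3$: indeed $l_i$ has degree $i-2$, so $l_i$ maps $V^{\otimes i}$, whose components live in degrees $0,1,\ldots,i$, into $V$, which is concentrated in degrees $0,1$; for $i\ge 4$ every homogeneous input tuple of total degree $\le 1$ forces the output into degree $\le 1-(i-2)<0$ whenever there is enough room, and a short case check shows all such contributions must vanish. Thus only the higher Jacobi identities (\ref{LodayInfinityAlgebraIdenities}) for $n=1,2,3,4,5$ survive, and for $n\ge 6$ they are automatically trivial.

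Next I would expand (\ref{LodayInfinityAlgebraIdenities}) for each relevant $n$ and read off which identity it produces. The case $n=1$ gives $l_1 l_1=0$, which is automatic here since $l_1\colon V_1\to V_0$ and there is nothing in degree $2$; it need not even be listed. The case $n=2$ unwinds to the compatibility of $l_1$ with $l_2$, i.e.\ that $l_1$ is a derivation for $l_2$ in the appropriate graded-Leibniz sense; splitting according to where the degree-$1$ element sits yields precisely items (\ref{identity_a}) and (\ref{identity_b}) (the latter coming from the input $(h,k)\in V_1\otimes V_1$, where both sides of the $n=2$ identity land in $V_0$). The case $n=3$ with all inputs in $V_0$ produces (\ref{identity_c}); with exactly one input in $V_1$ (three sub-cases according to its slot) it produces the three equations of (\ref{identity_d}). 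Finally $n=4$ with all four inputs in $V_0$ produces (\ref{identity_e}), while $n=4$ with an input in $V_1$, and all of $n=5$, give identities whose every term has the output forced into a negative degree, hence are vacuous.

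The only real bookkeeping is sign and coefficient tracking: one must carefully evaluate the prefactor $(-1)^{(n-k+1)(j-1)}(-1)^{j(v_{\sigma(1)}+\cdots+v_{\sigma(k-j)})}\varepsilon(\sigma)\mathrm{sign}(\sigma)$ on each surviving shuffle, using that the degrees $v_i$ are $0$ or $1$ and that the Koszul sign $\varepsilon(\sigma)$ only sees the degree-$1$ elements. For the all-degree-$0$ cases the Koszul signs are trivial, and one is left with the purely combinatorial $(-1)^{(n-k+1)(j-1)}\mathrm{sign}(\sigma)$, which is what produces the alternating pattern visible in (\ref{identity_c}) and (\ref{identity_e}). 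For the mixed cases with one degree-$1$ input, the extra factor $(-1)^{j(v_{\sigma(1)}+\cdots+v_{\sigma(k-j)})}$ contributes exactly when the degree-$1$ element precedes the inner bracket, which is how the three distinct equations in (\ref{identity_d}) acquire their particular signs. I expect the main obstacle to be nothing conceptual but simply this disciplined enumeration of $(i,j,k,\sigma)$-tuples for $n=3,4$ together with their signs; once that table is assembled, matching it termwise against (\ref{identity_a})--(\ref{identity_e}) is immediate. One can alternatively, and perhaps more cleanly, invoke Theorem \ref{LeibInftyAlg1:1} and compute $D^2=0$ for the codifferential on $\op{Zin}^{\mathrm{c}}(sV)$ with $sV$ in degrees $1,2$, where the truncation is even more transparent; but the bracket-side computation is self-contained and is the one I would present, relegating the routine sign verification to the reader as the statement already does by citing \cite{SL10}.
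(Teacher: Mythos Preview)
Your proposal is correct and follows exactly the approach the paper indicates: the authors state that this proposition is a specialization to the 2-term case of Definition~\ref{LeibInftyAlg} and refer to \cite{SL10}, without spelling out the sign bookkeeping. Your outline is in fact more detailed than what the paper itself provides.
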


\begin{prop}\label{2term_Loday_morphism} An \emph{infinity morphism between 2-term Leibniz infinity algebras} $(V,l_1,l_2,l_3)$ and $(W,m_1,m_2,m_3)$ is made up by a linear and a bilinear map $f_1,f_2$ from $V$ to $W$, of degree $|f_1|=0, |f_2|=1$, which verify, for any $x,y,z\in V_0$ and $h\in V_1$,
\begin{enumerate}[(a)]\label{fdg}
\item\label{morphism_a} $m_1f_1h=f_1l_1h\;,$
\item $m_2(f_1x,f_1y)+m_1f_2(x,y)=f_1l_2(x,y)\;,\label{morphism_b}$
\item \label{morphism_c}$m_2(f_1x,f_1h)=f_1l_2(x,h)-f_2(x,l_1h)\;,$\\
$m_2(f_1h, f_1x) = f_1l_2(h, x) - f_2(l_1h, x)\;,$
\item \label{morphism_d}$m_3(f_1x,f_1y,f_1z)-m_2(f_2(x,y),f_1z)+m_2(f_1x,f_2(y,z))-m_2(f_1y,f_2(x,z))=$\\
$f_1l_3(x,y,z)+f_2(l_2(x,y),z)-f_2(x,l_2(y,z))+f_2(y,l_2(x,z))\;.$
\end{enumerate}
\end{prop}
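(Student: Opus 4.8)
The claim is that Proposition~\ref{2term_Loday_morphism} — and, in the same spirit, Proposition~\ref{2term_Loday_in_algebra} — is precisely what Definition~\ref{LeibInftyAlgMorph} (resp.\ Definition~\ref{LeibInftyAlg}) becomes once source and target are concentrated in degrees $0$ and $1$. So the plan is to impose the grading constraint, record which structure maps and which component maps survive, and then read the defining identity~(\ref{LodayInfinityAlgebraMorphismIdenities}) off for the finitely many values of $n$ that still carry information.

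\emph{Vanishings forced by the grading.} Since $l_i$ (resp.\ $m_i$) has degree $i-2$ and $V=V_0\oplus V_1$, $W=W_0\oplus W_1$, one gets $l_i=0=m_i$ for $i\ge 4$; moreover $l_1$ annihilates $V_0$, $m_1$ annihilates $W_0$, and $l_3$ (resp.\ $m_3$) is zero unless all three of its arguments lie in $V_0$ (resp.\ $W_0$). Likewise $\varphi_i=f_i$ has degree $i-1$, so $f_i=0$ for $i\ge 3$ and $f_2$ vanishes unless both arguments lie in $V_0$. A short degree count then shows that~(\ref{LodayInfinityAlgebraMorphismIdenities}) reads $0=0$ for all $n\ge 4$: on its left-hand side every summand contains $f_{\ge3}$, $m_{\ge4}$, or $f_2$ evaluated on an odd vector, and on its right-hand side every summand contains $f_{\ge3}$, $l_{\ge4}$, or $f_2$ evaluated on an odd vector. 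Hence only $n\in\{1,2,3\}$ remain; for each such $n$ one runs over the admissible partitions $k_1+\dots+k_i=n$ (with the shuffle condition $\sigma(k_1)<\dots<\sigma(k_1+\dots+k_i)$) on the left and over $i+j=n+1$, $j\le k\le n$, $\sigma\in\op{Sh}(k-j,j-1)$ on the right, keeping only the terms not killed above.

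\emph{Reading off the identities.} For $n=1$ the surviving data are $(i)=(1)$ on the left and $(i,j,k)=(1,1,1)$ on the right; the identity is $0=0$ on $V_0$ and, on $v=h\in V_1$, is $m_1f_1h=f_1l_1h$, i.e.\ (a). For $n=2$ the left side keeps $m_1f_2(v_1,v_2)$ and $m_2(f_1v_1,f_1v_2)$ while the right side keeps $f_1l_2(v_1,v_2)$ and the two terms $\varphi_2$-of-$l_1$; on $(x,y)\in V_0^{\otimes 2}$ this is (b), and on $(x,h)$ (resp.\ $(h,x)$) with $h\in V_1$ the term $m_1f_2$ and one of the $l_1$-terms drop out (since $f_2$ then has an odd argument), leaving $m_2(f_1x,f_1h)=f_1l_2(x,h)-f_2(x,l_1h)$ (resp.\ the second line of (c)). For $n=3$ one checks, again by degree, that the identity is $0=0$ unless all three arguments lie in $V_0$; on $(x,y,z)\in V_0^{\otimes 3}$ the left side keeps $m_3(f_1x,f_1y,f_1z)$ together with the three $m_2(f_2(-,-),f_1-)$ / $m_2(f_1-,f_2(-,-))$ terms, and the right side keeps $f_1l_3(x,y,z)$ together with the three $f_2\circ l_2$ terms, which is exactly (d).

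\emph{The main point: signs.} The only genuine work is matching the numerical coefficients. For each surviving datum $(n,i,j,k,\sigma)$ and each parity distribution of the inputs one must evaluate the left prefactor $(-1)^{\sum_{r}(i-r)k_r+i(i-1)/2}(-1)^{\sum_{r\ge 2}(k_r-1)(\cdots)}\varepsilon(\sigma)\,\mathrm{sign}(\sigma)$, respectively the right prefactor $(-1)^{k+(n-k+1)j}(-1)^{j(\cdots)}\varepsilon(\sigma)\,\mathrm{sign}(\sigma)$, and check that after transposing terms to one side they reproduce the signs displayed in Proposition~\ref{2term_Loday_morphism}. In every case at hand the relevant $\sigma$ is either the identity or a single transposition, and the Koszul sign $\varepsilon(\sigma)$ is a $\pm 1$ governed by at most one odd variable, so the computation is finite and elementary — but it is the step where care is needed. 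The companion statement, Proposition~\ref{2term_Loday_in_algebra}, is obtained in the same way from~(\ref{LodayInfinityAlgebraIdenities}); the only difference is that now $n=4$ on $V_0^{\otimes 4}$ is \emph{not} vacuous — it yields identity (e) — because $l_2(l_3(-,-,-),-)$ need not vanish.
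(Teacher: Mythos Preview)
Your proposal is correct and follows exactly the route the paper indicates: the paper gives no proof beyond stating that Propositions~\ref{2term_Loday_in_algebra} and~\ref{2term_Loday_morphism} are ``specializations to the 2-term case of Definitions~\ref{LeibInftyAlg} and~\ref{LeibInftyAlgMorph}'' (with a reference to \cite{SL10}), and you have carried out that specialization explicitly. One cosmetic remark: your justification for why $n\ge 4$ is vacuous (``every summand contains $f_{\ge 3}$, $m_{\ge 4}$, or $f_2$ on an odd vector'') misses the case $m_2(f_2,f_2)$, but the cleaner degree count you allude to --- the level-$n$ identity is a degree $n-2$ map into $W=W_0\oplus W_1$ --- dispatches all $n\ge 4$ at once.
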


\begin{cor} The category ${\tt 2Lei_{\infty}}$ of 2-term Leibniz infinity algebras and infinity morphisms is a full subcategory of the category ${\tt Lei_{\infty}}$-${\tt Alg}$ of Leibniz infinity algebras and infinity morphisms.\end{cor}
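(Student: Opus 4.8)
The plan is to verify that the two defining conditions of a full subcategory hold: first, that every 2-term Leibniz infinity algebra (in the sense of Proposition \ref{2term_Loday_in_algebra}) really is a Leibniz infinity algebra in the sense of Definition \ref{LeibInftyAlg}, with all higher brackets $l_i$, $i\ge 4$, set to zero; and second, that the morphisms coincide, i.e. that an infinity morphism between 2-term Leibniz infinity algebras as described in Proposition \ref{2term_Loday_morphism} is exactly a Leibniz infinity algebra morphism in the sense of Definition \ref{LeibInftyAlgMorph} between the underlying objects, again with all higher component maps $f_i$, $i\ge 3$, vanishing. Once both points are established, fullness is immediate: by construction ${\tt 2Lei_\infty}$ has the \emph{same} morphism sets as ${\tt Lei_\infty}$-${\tt Alg}$ between objects concentrated in degrees $0$ and $1$.

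For the first point, I would argue by degree reasons. If $V=V_0\oplus V_1$ is concentrated in degrees $0$ and $1$, then any $i$-ary bracket $l_i$ has degree $i-2$, so for $i\ge 4$ it would have to lower or shift degree by at least $2$, forcing $l_i=0$ on $V^{\otimes i}$; similarly $l_3$ can only be nonzero as a map $V_0^{\otimes 3}\to V_1$ (plus the cases where exactly one argument lies in $V_1$, landing in degree $0$ outside the range, hence forced to involve $l_1$), and $l_2$ is constrained analogously. Then I would expand the higher Jacobi identity \eqref{LodayInfinityAlgebraIdenities} for each $n$: for $n\ge 4$ it becomes vacuous (every term vanishes for degree reasons), for $n=1$ it gives $l_1^2=0$, for $n=2$ it gives the compatibility of $l_1$ with $l_2$ (identities \eqref{identity_a}, and with the appropriate sign placement, \eqref{identity_b}), for $n=3$ it gives identities \eqref{identity_c} and \eqref{identity_d}, and for $n=4$ it gives the long identity \eqref{identity_e}. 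The bookkeeping here is routine but requires care with the shuffle signs, the sign $(-1)^{(n-k+1)(j-1)}$, and the Koszul signs $(-1)^{j(v_{\sigma(1)}+\cdots)}$ in \eqref{LodayInfinityAlgebraIdenities}; I would simply remark that this is the content of the specialization and point to \cite{SL10}, since the excerpt already says these propositions are such specializations. The converse direction — that the ten-plus scalar identities of Proposition \ref{2term_Loday_in_algebra} assemble back into \eqref{LodayInfinityAlgebraIdenities} — is the same computation read backwards.

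For the second point, the same degree analysis applies to the component maps $\varphi_i=f_i:V^{\otimes i}\to W$ of degree $i-1$: for $i\ge 3$ such a map between spaces concentrated in degrees $0,1$ must vanish, while $f_1$ has degree $0$ and $f_2$ has degree $1$ (so $f_2$ is essentially a map $V_0^{\otimes 2}\to W_1$ together with forced-to-involve-$l_1/m_1$ pieces). Expanding the morphism relation \eqref{LodayInfinityAlgebraMorphismIdenities} for $n=1,2,3$ yields exactly identities \eqref{morphism_a}, \eqref{morphism_b}, \eqref{morphism_c}, \eqref{morphism_d}, and for $n\ge 4$ the relation is vacuous. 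I would also note that composition of infinity morphisms restricts correctly — the composite of two such morphisms again has vanishing components in arity $\ge 3$ by the degree argument, and the arity-$1$ and arity-$2$ components are computed from the general composition formula — so that ${\tt 2Lei_\infty}$ is genuinely a subcategory, not merely a collection of objects; and identities are plainly the 2-term identities. Hence ${\tt 2Lei_\infty}$ is the full subcategory of ${\tt Lei_\infty}$-${\tt Alg}$ spanned by the objects whose underlying graded vector space sits in degrees $0$ and $1$.

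\textbf{Main obstacle.} There is no conceptual difficulty: the only real work is the sign-accurate verification that the abstract identities \eqref{LodayInfinityAlgebraIdenities} and \eqref{LodayInfinityAlgebraMorphismIdenities}, evaluated at $n\le 4$ (resp.\ $n\le 3$) on arguments drawn from $V_0$ and $V_1$, reproduce \emph{precisely} the signs appearing in Propositions \ref{2term_Loday_in_algebra} and \ref{2term_Loday_morphism} — in particular matching the shuffle signatures and the Koszul signs against the explicit $\pm$ patterns in identities \eqref{identity_c}--\eqref{identity_e} and \eqref{morphism_b}--\eqref{morphism_d}. Since the excerpt explicitly flags these propositions as specializations and refers to \cite{SL10}, I would present the statement as essentially a bookkeeping corollary of that identification, giving the $n=1,2$ cases in detail as a model and leaving the $n=3,4$ expansions to the reader, and then conclude fullness in one line.
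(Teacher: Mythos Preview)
Your proposal is correct and is exactly the natural unpacking of what the paper leaves implicit: the paper states the corollary without proof, relying on the sentence preceding Propositions \ref{2term_Loday_in_algebra} and \ref{2term_Loday_morphism} that these are ``specializations to the 2-term case of Definitions \ref{LeibInftyAlg} and \ref{LeibInftyAlgMorph}''. Your degree-counting argument (forcing $l_i=0$ for $i\ge 4$ and $f_i=0$ for $i\ge 3$) together with the identification of the surviving relations is precisely how that specialization is made rigorous, so there is nothing to compare --- you have simply written out what the paper takes for granted.

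One cosmetic slip: you write that the higher Jacobi identity is vacuous ``for $n\ge 4$'' and then, in the same sentence, that ``for $n=4$ it gives the long identity \eqref{identity_e}''. You mean vacuous for $n\ge 5$; the $n=4$ relation on $V_0^{\otimes 4}$ is non-trivial and is exactly \eqref{identity_e}.
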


\subsection{From the Kan property to 2-term infinity homotopies and their compositions}\label{KanHomComp}

\begin{defi} A \emph{2-term infinity homotopy} between infinity morphisms $f=(f_1,f_2)$ and $g=(g_1,g_2)$, which act themselves between 2-term Leibniz infinity algebras $(V,l_1,l_2,l_3)$ and $(W,m_1,m_2,m_3)$, is a linear map $\theta_1$ from $V$ to $W$, of degree $|\theta_1|=1$, which verifies, for any $x,y\in V_0$ and $h\in V_1$,
\begin{enumerate}[(a)]\label{fdg}
\item $g_1x-f_1x=m_1\theta_1x\;,$ \label{homotopy_a}
\item $g_1h-f_1h=\theta_1l_1h\;,$ \label{homotopy_b}
\item \label{homotopy_c} $g_2(x,y)-f_2(x,y)=\theta_1l_2(x,y)-m_2(f_1x,\theta_1y)-m_2(\theta_1x,g_1y)\;.$
\end{enumerate}
\label{HomTheo}\end{defi}

The characterizing relations (a) - (c) of infinity Leibniz homotopies are the correct counterpart of the defining relations of infinity Lie homotopies \cite{BC04}. However, rather than choosing the preceding relations as a mere definition, we deduce them here from the Kan property of $\op{MC}_\bullet(\bar L)$. More precisely,

\begin{theo} There exist surjective maps $S^{\,i}_1$, $i\in\{0,1\},$ from the class $\cal I$ of $\infty$-homotopies for 2-term Leibniz infinity algebras to the class $\cal T$ of $\,2$-term $\infty$-homotopies for 2-term Leibniz infinity algebras.\label{KanHom1}\end{theo}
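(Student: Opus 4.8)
The plan is to use the explicit parametrized bijections $B_n^i:\op{MC}_n(\bar L)\stackrel{\sim}{\to}\op{MC}(L)\times\op{mc}_n^i(\bar L)$ from Subsection \ref{Kan property}, specialized to $n=1$, in order to produce, for each $i\in\{0,1\}$, a concrete map sending an $\infty$-homotopy (an element of $\op{MC}_1(\bar L)$, i.e. of $\op{MC}(L\0\zW_1^\star)$) to a 2-term $\infty$-homotopy in the sense of Definition \ref{HomTheo}. First I would recall that, by Definition \ref{InftyNHomot}, an $\infty$-homotopy between $\infty$-morphisms $f,g:V\to W$ is an element $\bar\zg\in\op{MC}_1(\bar L)$ with $\ze_1^0\bar\zg=\za(f)$ and $\ze_1^1\bar\zg=\zb(g)$, where $L=L(V,W)=\h_\K(\op{Zin}^c(sV),W)$. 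Writing $\bar\zg=\zg(t)\0 1+r(t)\0 dt$ as in Equation (\ref{Getzler-Shoiket_homotopy}), the map $B_1^i$ extracts the pair $(\ze_1^i\bar\zg,(\zd+d)h_1^i\bar\zg)$; the first component is the MC element $\za(f)$ (for $i=0$) or $\zb(g)$ (for $i=1$), and the integrated datum $h_1^i\bar\zg=\int_0^{(\cdot)}r(t)\,dt$ (up to the base-point normalization $\ze_1^i$) contains exactly the homotopy operator. So the candidate for $S_1^i$ is: take $\bar\zg$, form $\zt:=h_1^i\bar\zg\in(L\0\zW_1^\star)^0$, restrict it to an element $\zt\in L_1=\h_\K(\op{Zin}^c(sV),W)_1$ by the appropriate evaluation/truncation, and read off its components $\zt_1:V\to W$ (degree $1$) — discarding $\zt_{\ge 2}$ in the 2-term case where these vanish for degree reasons.

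The key steps, in order, are then: (i) make the identification $\op{Zin}^c(sV)_{\text{low degrees}}\cong sV\oplus (sV)^{\0 2}\oplus\cdots$ explicit in the 2-term situation $V=V_0\oplus V_1$, and observe that because $V$ is concentrated in degrees $0,1$ the convolution $L_\infty$-algebra $L(V,W)$ is concentrated in a finite range of degrees, so the MC series (\ref{MCLInfty}) and the flow (\ref{SpecVectfield})–(\ref{FlowEq}), as well as all the $h_n^i$-iterations of Subsection \ref{Kan property}, truncate to finite sums; (ii) unravel the MC equation of $\bar L=L\0\zW_1^\star$ in powers of $dt$, getting the two equations (\ref{Getzler-Shoiket_homotopy}), and note that because the relevant brackets $\mathcal L_p$ vanish for $p$ large (here essentially $p\le 3$), the flow equation for $\zg(t)$ is polynomial in $t$, so $\zg(t)$ is affine in $t$ and $r(t)$ may be taken constant; (iii) write the linear term $\zt_1$ of $\zt=h_1^i\bar\zg$, i.e. of $\int_0^1 r\,dt$ in the constant-$r$ normalization at $i$, and translate the identity $B_1^i\bar\zg\in\op{MC}(L)\times\op{mc}_1^i(\bar L)$ together with the boundary conditions $\ze_1^0\bar\zg=\za(f)$, $\ze_1^1\bar\zg=\zb(g)$ into relations among $f_1,f_2,g_1,g_2,\zt_1$; (iv) check that these relations are precisely (\ref{homotopy_a})–(\ref{homotopy_c}) of Definition \ref{HomTheo}, using the dictionary between MC elements of $L(V,W)$ and $\infty$-morphisms from Proposition \ref{MorphMC} (so that $\za(f)\leftrightarrow(f_1,f_2)$, etc., via desuspension $s^{-1}$ and the coalgebra structure of $\op{Zin}^c$); (v) establish surjectivity: given a 2-term $\infty$-homotopy $\zt_1$ with data (a)–(c), use the inverse maps $\mathcal B_1^i$ of (\ref{BInv}) applied to $(\za(f),(\zd+d)\zb)$ for a suitable $\zb$ built from $\zt_1$ — concretely set $r:=$ (the degree-$1$ element of $L$ corresponding to $\zt_1$) and $\zg(0)=\za(f)$, solve the (finite) flow, and verify the endpoint is $\zb(g)$ — to obtain a preimage $\bar\zg\in\op{MC}_1(\bar L)$ with $S_1^i\bar\zg=\zt_1$.

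The main obstacle I expect is step (iii)–(iv): the sign bookkeeping when passing from Getzler's cohomological, graded-symmetric sign convention (used throughout Subsection \ref{Kan property}) back to the homological Leibniz/Zinbiel convention of Definitions \ref{LeibInftyAlg}–\ref{LeibInftyAlgMorph}, combined with the de/suspension maps $s,s^{-1}$ and the shuffle/Koszul signs hidden in $\Delta^{p-1}$ and in the antisymmetrization operator of Proposition \ref{proposition_about_lie_inf_on_space_of_homomorphisms}. One must verify that, after all these conversions, the cross terms $m_2(f_1x,\theta_1y)$ and $m_2(\theta_1x,g_1y)$ in (\ref{homotopy_c}) come out with exactly the signs displayed (in particular that it is $f_1$ that pairs with $\theta_1 y$ on the left and $g_1$ that pairs with $\theta_1 x$ on the right, a consequence of the time-ordering in the flow $\int_0^1$), and that the quadratic-in-$r$ contributions $\mathcal L_2(\zg,r)$ do not spoil the linearity of $\zt_1$ in the 2-term truncation. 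A secondary, milder point is justifying the use of Getzler's Kan-property machinery — whose proof assumed nilpotency — in the merely complete convolution algebra $L(V,W)$: as remarked after Definition \ref{InftyNHomot}, in the 2-term case all the series are finite for degree reasons, so the formal manipulations are legitimate; I would state this reduction explicitly at the start of the proof. Finally, surjectivity in step (v) is essentially forced because $B_1^i$ is a bijection and every pair $(\za(f),(\zd+d)\zb)$ with $\ze_1^i\zb=0$ is hit; the only thing to check is that the second coordinate can always be arranged to encode a prescribed $\zt_1$, which follows from exactness of $(\zd+d)$ modulo the $\ze_1^i$-normalization, i.e. from the relation $\{\zd+d,h_1^i\}=\id-\ze_1^i$ in (\ref{ibp_proper}).
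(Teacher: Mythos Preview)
Your strategy matches the paper's almost exactly: define $S_1^i(\bar\zg)$ as $\theta_1:=(\zb(1)-\zb(0))s$ with $\zb:=h_1^i\bar\zg$, relate the two endpoints of $\bar\zg$ through the explicit inverse ${\cal B}_1^i$ to derive relations (a)--(c), and prove surjectivity by applying ${\cal B}_1^i$ to a $\zb$ with prescribed boundary values (the paper sets $\zb(i)=0$, $\zb(1-i)=(-1)^i\theta_1 s^{-1}$ and extends polynomially via Renshaw's method).

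One step in your outline is wrong, however. The claim in (ii) that $\zg(t)$ is affine in $t$ and that $r(t)$ may be taken constant is false for a general $\bar\zg\in\op{MC}_1(\bar L)$: the flow equation for the arity-two component reads $d_t\zg^2=-({\cal L}_1 r)^2-{\cal L}_2(\zg,r)^2$, with a genuine coupling term, and $r(t)$ is given data, not a choice. The paper does not normalize this way. Instead it runs the iteration $\za_k=\za_0-h_1^i\bar R(\za_{k-1})$ with $\za_0=\zm+(\zd+d)\zb$ and observes, via the degree table (\ref{non_zero_maps}), that it stabilizes already at $k=2$, yielding the closed formula
\[
\bar\zg=\zm+(\zd+d)\zb+\bar{\cal L}_2(\zm,\zb)+\tfrac{1}{2}\bar{\cal L}_2(\zd\zb,\zb)\;.
\]
Evaluating at $t=1$ (for $i=0$) then expresses $g$ directly in terms of $f=\zm$ and $\theta_1=\zb(1)s$; the cross-terms $m_2(f_1x,\theta_1y)$ and $m_2(\theta_1x,g_1y)$ in (c) fall out of the last two summands (after using relation (a) to rewrite $f_1+m_1\theta_1$ as $g_1$), rather than from a time-ordering heuristic. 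Once you replace the affine shortcut by this stabilized-iteration formula, the remainder of your plan --- including the surjectivity argument via ${\cal B}_1^i$ --- goes through exactly as in the paper.
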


\begin{rem} The maps $S^{\,i}_1$ preserve the source and the target, i.e. they are surjections from the class ${\cal I}(f,g)$ of $\;\infty$-homotopies from $f$ to $g$, to the class ${\cal T}(f,g)$ of $\;2$-term $\,\infty$-homotopies from $f$ to $g$. In the sequel, we refer to a preimage by $S_1^i$ of an element $\zy_1\in{\cal T}$ as a \emph{lift} of $\zy_1$ by $S_1^i$ .\end{rem}

\begin{proof} Henceforth, we use again the homological version of infinity algebras ($k$-ary bracket of degree $k-2$), as well as the Lada-Stasheff sign convention for the higher Jacobi conditions and the {\small MC} equation. \medskip

Due to the choice of the homological variant of homotopy algebras, $\zd={\cal L}_1$ has degree $-1$. For consistency, differential forms are then viewed as negatively graded; hence, $d:\zW^{-k}(\zD^n)\to \zW^{-k-1}(\zD^n)$, $k\in\{0,\ldots,n\}$, and $\bar{\cal L}_1=\zd\0\id+\id\0\, d$ has degree $-1$ as well. Similarly, the degree of the operator $h_n^i$ is now $|h_n^i|=1$. It is moreover easily checked that $L$ cannot contain multilinear maps of nonnegative degree, i.e. that $L=\oplus_{k\ge 0}L_{-k}$. It follows that an element $\bar\za\in(L\0\zW^\star(\zD^n))^{-k}$, $k\ge 0$, reads $$\bar\za=\sum\za_{-k}\0\zw^0+\sum\za_{-k+1}\0\zw^{-1}+\ldots\;,$$ where the {\small RHS} is a finite sum. For instance, if $n=2$, an element $\bar\za$ of degree $-1$ can be decomposed as $$\bar\za=\za(s,t)\0 1+\zb(s,t)\0 ds+\zb'(s,t)\0 dt\;,$$ where $(s,t)$ are coordinates of $\zD^2$ and where $\za(s,t)\in L_{-1}[s,t]$ and $\zb(s,t),\zb'(s,t)\in L_0[s,t]$ are polynomial functions in $s,t$ with coefficients in $L_{-1}$ and $L_0$, respectively.\medskip

In the sequel, we evaluate the $L_{\infty}$-structure maps $\bar{\cal L}_i$ of $L\0 \zW^\star(\zD^n)$ mainly on elements of degree $-1$ and $0$, hence we compute the structure maps ${\cal L}_i$ of $L=\h_\K(\op{Zin}^c(sV),W)$ on elements $\za$ and $\zb$ of degree $-1$ and $0$, respectively. Let
\bea \ga=\sum_{p\geq1}\ga^p\in L\;,~~~|\ga|=-1\;,\nn\\
\zb=\sum_{p\geq1}\zb^p\in L\;,~~~|\zb|=0\;,\nn\eea
where $\ga^p,\zb^p:~(sV)^{\otimes p}\to W$. The point is that the concentration of $V,W$ in degrees $0,1$ entails that almost all components $\za^p,\zb^p$ vanish and that all series converge (which explains why the formal application of Getzler's method to the present situation leads to the correct counterpart of the findings of \cite{BC04}). Indeed, the only nonzero components of $\za,\zb$ are
\bea \ga^1:&& sV_0\to W_0,~~sV_1\to W_1\;,\nn\\
\ga^2:&&(sV_0)^{\otimes 2}\to W_1\;,\nn\\
\zb^1:&&sV_0\to W_1\;.\label{CompAlphaBeta}\eea
Similarly, the nonzero components of the nonzero evaluations of the maps ${\cal L}_i$ on $\ga$-s and $\zb$-s are
\bea
\SC{L}_1(\ga):&&sV_1\to W_0\;,~~(sV_0)^{\otimes 2} \to W_0\;,~~sV_0\otimes sV_1\to W_1\;,~~(sV_0)^{\otimes 3}\to W_1\;,\nn\\
\SC{L}_1(\zb):&& sV_0\to W_0\;,~~sV_1\to W_1\;,~~(sV_0)^{\otimes 2}\to W_1\;,\nn\\
\SC{L}_2(\ga_1,\ga_2):&& (sV_0)^{\otimes 2}\to W_0\;,~~sV_0\otimes sV_1\to W_1\;,~~(sV_0)^{\otimes 3}\to W_1\;,\nn\\
\SC{L}_2(\ga,\zb):&&(sV_0)^{\otimes 2}\to W_1\;,\nn\\
\SC{L}_3(\ga_1,\ga_2,\ga_3):&& (sV_0)^{\otimes 3}\to W_1\;,\label{non_zero_maps}\eea see Proposition \ref{proposition_about_lie_inf_on_space_of_homomorphisms}.\medskip

We are now prepared to concretize the iterative construction of ${\cal B}_n^i(\zm,\zn)\in\op{MC}_n(\bar L)$ from $\zm\in\op{MC}(L)$ and $\zn=(\zd+d)\zb$, $\zb\in(L\0\zW^\star(\zD^n))^0$, $\ze_n^i\zb=0$ (the explicit forms of ${\cal B}_n^i(\zm,\zn)$ for $n=1$ and $n=2$ will be the main ingredients of the proofs of Theorems \ref{KanHom1} and \ref{KomComp2}).\medskip

$\bullet\quad$ Let $\za\in{\cal I}(f,g)$, i.e. let $$\za\in\op{MC}_1(\bar L)\stackrel{\sim}{\longrightarrow}(\zm,(\zd+d)\zb)\in\op{MC}(L)\times\op{mc}_1^0(\bar L)\;,$$ such that $\ze_1^0\za=f$ and $\ze_1^1\za=g$. To construct $$\za={\cal B}_1^0B_1^0\za={\cal B}_1^0(\ze_1^0\za,(\zd+d)h_1^0\za)=:{\cal B}_1^0(f,(\zd+d)\zb)=:{\cal B}_1^0(\zm,\zn)\;,$$ we start from
\bea \ga_0=\mu+(\gd+d)\zb\;.\nn\eea
The iteration unfolds as
\bea \ga_{k}=\ga_0-\sum_{j=2}^{\infty}\frac{1}{j!}h_1^0\bar{\SC{L}}_j(\ga_{k-1},\cdots,\ga_{k-1})\;,\quad k\ge 1\;.\nn\eea
Explicitly,
\bea \ga_1&=&\mu+(\gd+d)\zb-\frac12h_1^0\bar{\SC{L}}_2(\ga_0,\ga_0)-\frac1{3!}h_1^0\bar{\SC{L}}_3(\ga_0,\ga_0,\ga_0)\nn\\
&=&\mu+(\gd+d)\zb-h_1^0\bar{\SC{L}}_2(\zm+\gd \zb,d\zb)-\frac1{2}h_1^0\bar{\SC{L}}_3(\mu+\gd \zb,\mu+\gd \zb,d\zb)\nn\\
&=&\mu+(\gd+d)\zb-h_1^0\bar{\SC{L}}_2(\mu+\gd \zb,d\zb)\;.\nn\eea
Observe that $\zm+\zd\zb\in L_{-1}[t]$ and $d\zb\in L_0[t]\0 dt$, that differential forms are concentrated in degrees $0$ and $-1$, that $h_1^0$ annihilates 0-forms, and that the term in $\bar{\cal L}_3$ contains a factor of the type ${\cal L}_3(\za_1,\za_2,\zb)$ (notation of (\ref{non_zero_maps})), whose components vanish -- see above. 
Analogously,
\bea \ga_2&=&\mu+(\gd+d)\zb-h_1^0\bar{\SC{L}}_2(\mu+\gd \zb-h_1^0\bar{\SC{L}}_2(\mu+\gd \zb,d\zb),d\zb)\nn\\
&&-\frac12h_1^0\bar{\SC{L}}_3(\mu+\gd \zb-h_1^0\bar{\SC{L}}_2(\mu+\gd \zb,d\zb),\mu+\gd \zb-h_1^0\bar{\SC{L}}_2(\mu+\gd \zb,d\zb),d\zb)\nn\\
&=&\mu+(\gd+d)\zb-h_1^0\bar{\SC{L}}_2(\mu+\gd \zb,d\zb)\;.\nn\eea
Indeed, the term $h_1^0\bar{\cal L}_2(h_1^0\bar{\cal L}_2(\zm+\zd\zb,d\zb),d\zb)$ contains a factor of the type ${\cal L}_2({\cal L}_2(\za,\zb_1),\zb_2)$ (notation of (\ref{non_zero_maps})), and the only nonvanishing component of this factor, as well as of its first internal map ${\cal L}_2(\za,\zb_1)$, is the component $(sV_0)^{\0 2}\to W_1$ -- which entails, in view of Proposition \ref{proposition_about_lie_inf_on_space_of_homomorphisms}, that the considered term vanishes. Hence, the iteration stabilizes already at its second stage and \be\label{HomotDef}\za={\cal B}_1^0(\zm,\zn)=\mu+(\gd+d)\zb-h_1^0\bar{\SC{L}}_2(\mu+\gd \zb,d\zb)\in\op{MC}_1(\bar L)\;.\ee



Remark first that the integral $h_1^0$ can be evaluated since $\bar{\SC{L}}_2(\mu+\gd \zb,d\zb)$ is a total derivative. Indeed, when setting $\zb=\zb_0\0 P$ (sum understood), $\zb_0\in L_0$ and $P\in\zW^0(\zD^1)$, we see that $$\bar{\cal L}_2(\zm,d\zb)={\cal L}_2(\zm,\zb_0)\0 dP=-d\bar{\cal L}_2(\zm,\zb)\;.$$ As for the term $\bar{\cal L}_2(\zd\zb,d\zb)$, we have \bea 0 = (\zd+d)\bar{\cal L}_2(\zb,d\zb)=\bar{\SC{L}}_2(\gd \zb, d\zb)+\bar{\SC{L}}_2(\zb,\gd d\zb)\;,\nn\eea since $\bar{\cal L}_1=\zd+d$ is a graded derivation of $\bar{\cal L}_2$ and as $\bar{\cal L}_2(\zb,d\zb)=\bar{\cal L}_2(d\zb,d\zb)=0$. It is now easily checked that \bea \bar{\SC{L}}_2(\gd\zb, d\zb)=-\frac12d\bar{\SC{L}}_2(\gd \zb, \zb)\;.\nn\eea
Eventually,
\bea \ga&=&\mu+(\gd+d)\zb+h_1^0d\bar{\SC{L}}_2(\mu,\zb)+\frac12h_1^0d\bar{\SC{L}}_2(\gd \zb,\zb)\nn\\
&=&\mu+(\gd+d)\zb+\bar{\SC{L}}_2(\mu,\zb)+\frac12\bar{\SC{L}}_2(\gd \zb,\zb)\;.\nn\eea Indeed, it suffices to observe that, for any $\ell_{-1}\0 P\in L_{-1}\0\zW^0(\zD^1)$ which vanishes under the action of $\ze_1^0$, we have \bea h_1^0d(\ell_{-1}\0 P)=-dh_1^0(\ell_{-1}\0 P)+\ell_{-1}\0 P-\ze_1^0(\ell_{-1}\0 P)=\ell_{-1}\0 P\;.\nn\eea
We are now able to write the components of $g=\ze_1^1\za\in L_{-1}$ (see (\ref{CompAlphaBeta})) in terms of $f=\zm$ and $\zb$:
\bea
&&g^1=\ep_1^1(\mu+(\gd+d)\zb-\bar{\SC{L}}_2(\mu,\zb)-\frac12\bar{\SC{L}}_2(\gd \zb,\zb))^1=\ep_1^1(f+\gd \zb)^1=f^1+\ep_1^1(\gd \zb)^1\;,\nn\\
&&g^2=\ep_1^1(\mu+(\gd+d)\zb-\bar{\SC{L}}_2(\mu,\zb)-\frac12\bar{\SC{L}}_2(\gd \zb,\zb))^2=
\ep_1^1(f+\gd \zb-\bar{\SC{L}}_2(f,\zb)-\frac12\bar{\SC{L}}_2(\gd \zb,\zb))^2\;,\nn\\
&&g^3=0\;,\label{CharRelHomotPreFin}\eea
where we changed signs according to our sign conventions and remembered that the first component of a morphism of the type ${\cal L}_2(\za,\zb)$ (see (\ref{non_zero_maps})) vanishes.\medskip

To obtain a 2-term $\infty$-homotopy $\zy_1\in{\cal T}(f,g)$, it now suffices to further develop the equations (\ref{CharRelHomotPreFin}).

As $$g_1:=g^1s,f_1:=f^1s\in \h_\K^{0}(V,W)\;,$$ we evaluate the first equation on $x\in V_0$ and $h\in V_1$. Therefore, we compute $\ze_1^1(\zd\zb)^1s=\zd\zb(1)^1s$ on $x$ and $h$. Since $$\zd\zb(1)={\cal L}_1\zb(1)=m_1\zb(1)+\zb(1) D_V\;,$$ where $D_V\in \op{CoDer}^{-1}(\op{Zin}^c(sV))$, we have $D_V:sV_1\to sV_0\;, sV_0\0 sV_0\to sV_0\;,\ldots$ Hence, \be\label{HomotDefIntermed}\zd\zb(1)^1sx = m_1\,\zb(1)s\,x = m_1\zy_1x\;,\ee where we defined the {\it homotopy parameter} $\zy_1$ by \be\label{HomotPara}\zy_1:=\zb(1)s=\zb(1)s-\zb(0)s\;.\ee Similarly, \be\label{HomotDefIntermed1}\zd\zb(1)^1sh=\zb(1)D_Vsh=\zb(1)s\,s^{-1}D_Vs\,h=\zy_1l_1h\;.\ee The characterizing equations (a) and (b) follow.

Since $$g_2:=g^2s^2,f_2:=f^2s^2\in\h_\K^1(V\0 V,W)\;,$$ it suffices to evaluate the second equation on $x,y\in V_0$. When computing e.g. $\ze_1^1\bar{\cal L}_2(\zd\zb,\zb)^2s^2(x,y)$, we get $${\cal L}_2(\zd\zb(1),\zb(1))(sx,sy)=m_2(\zd\zb(1)\,sx,\zb(1)\,sy)+m_2(\zb(1)\,sx,\zd\zb(1)\,sy)=$$ \be\label{HomotDefIntermed2}m_2(m_1\zy_1x,\zy_1y)+m_2(\zy_1x,m_1\zy_1y)=2m_2(\zy_1x,m_1\zy_1y)\;,\ee in view of Equation (\ref{HomotDefIntermed}) and Relation (b) of Proposition \ref{2term_Loday_in_algebra}. Similarly, \be\label{HomotDefIntermed3}\ze_1^1\bar{\SC{L}}_2(f,\zb)^2s^2(x,y)=m_2(f_1x,\zy_1y)+m_2(\zy_1x,f_1y)\;.\ee Further, one easily finds \be\label{HomotDefIntermed4} \ze_1^1(\zd\zb)^2s^2(x,y)=\zy_1l_2(x,y)\;.\ee When collecting the results (\ref{HomotDefIntermed2}), (\ref{HomotDefIntermed3}), and (\ref{HomotDefIntermed4}), and taking into account Relation (a), we finally obtain the characterizing equation (c).\bigskip

$\;\bullet\quad$ Recall that in the preceding step we started from $\za\in {\cal I}(f,g)$, set $\zm=f$, $$\zb=h_1^0\za\;,$$ $\zn=(\zd+d)\zb$, defined $$\zy_1=(\zb(1)-\zb(0))s\;,$$ and deduced the characterizing relations $g=f+{\cal E}(f,\zb(1)s)=f+{\cal E}(f,\zy_1)$ of $\zy_1\in{\cal T}(f,g)$ by computing $$\za={\cal B}^0_1(\zm,\zn)=\mu+(\gd+d)\zb+\bar{\SC{L}}_2(\mu,\zb)+\frac12\bar{\SC{L}}_2(\gd \zb,\zb)$$ at $1$. Let us mention that instead of defining the map $S_1^0:{\cal I}\ni\za\mapsto \zy_1\in {\cal T}$, we can consider the similarly defined map $S_1^1$.\medskip

To prove surjectivity of $S_1^i$, let $\zy_1\in{\cal T}(f,g)$ and set $\zb(i)=0$, $i\in\{0,1\}$, and $\zb(1-i)=(-1)^{i}\zy_1s^{-1}$. Note that by construction $\zy_1=(\zb(1)-\zb(0))s$. Use now Renshaw's method \cite{Sul77} to extend $\zb(0)$ and $\zb(1)$ to some $\zb\in L_0\0\zW^0(\zD^1)$, set $$\zm=(1-i)f+ig\quad\text{and}\quad\zn=(\zd+d)\zb\;,$$ and construct \be\label{preimage}\za={\cal B}_1^i(\zm,\zn)\in\op{MC}_1(\bar L)\;.\ee If $i=0$, then $$\za(0)=\ze_1^0\za=\zm=f\quad\text{and}\quad\za(1)=({\cal B}_1^0(\zm,\zn))(1)=f+{\cal E}(f,\zy_1)=g\;,$$ in view of the characterizing relations (a)-(c) of $\zy_1$. If $i=1$, one has also $\za(1)=g$ and $\za(0)=g+{\cal E}(g,-\zy_1)=f$, but to obtain the latter result, the characterizing equations (a)-(c), as well as Equation (b) of Proposition \ref{2term_Loday_in_algebra} are needed. To determine the image of $\za\in{\cal I}(f,g)$ by $S_1^i$, one first computes $h_1^i\za$, which, since $h_1^i$ sends 0-forms to 0, is equal to $$h_1^i(\zd+d)\zb=-(\zd+d)h_1^i\zb+\zb-\ze_1^i\zb=\zb\;,$$ then one gets $$S_1^i\za=(\zb(1)-\zb(0))s=\zy_1\;,$$ which completes the proof.\end{proof}

\begin{theo}[Definition] If $\;\theta_1:f\Rightarrow g$, $\zt_1:g\Rightarrow h$ are 2-term $\infty$-homotopies between infinity morphisms $f,g,h:V\to W$, the vertical composite $\zt_1\circ_1 \theta_1$ is given by $\zt_1+\theta_1$.\label{KomComp2}\end{theo}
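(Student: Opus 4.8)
The plan is to realize $\zt_1\circ_1\theta_1$ as a face of a filler of an inner horn in the Kan complex $\op{MC}_\bullet(\bar L)$, and to compute that filler by the $n=2$ specialization of the construction of Subsection~\ref{Kan property}. Recall (Definition~\ref{InftyNHomot}) that an $\infty$-homotopy between $\infty$-morphisms $V\to W$ is a $1$-simplex of $\op{MC}_\bullet(\bar L)$, with source $\ze^0_1$ and target $\ze^1_1$. Given $\theta_1\in{\cal T}(f,g)$ and $\zt_1\in{\cal T}(g,h)$, Theorem~\ref{KanHom1} provides lifts to $\infty$-homotopies $\za\in{\cal I}(f,g)$, $\za'\in{\cal I}(g,h)$; one spends the single evaluation-index of freedom by taking $\za$ to be the lift of $\theta_1$ whose underlying $0$-form $\zb$ vanishes at the \emph{target} (the $S_1^1$-normalization) and $\za'$ the lift of $\zt_1$ whose underlying $0$-form vanishes at the \emph{source} (the $S_1^0$-normalization), so that both underlying forms vanish at the common vertex, which carries the morphism $g$. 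Then $(\za,\za')$ is an inner horn $\Lambda^1[2]\to\op{MC}_\bullet(\bar L)$ with $d_2=\za$ on the edge $01$ and $d_0=\za'$ on the edge $12$; a filler $\Phi\in\op{MC}_2(\bar L)$ has a $d_1$-face (the restriction to the edge $02$) which is an $\infty$-homotopy $f\Rightarrow h$, and $\zt_1\circ_1\theta_1$ is by definition $S_1^i(d_1\Phi)$.

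Next I would write $\Phi$ down. Recall from the proof of Theorem~\ref{KanHom1} that an element of $(L\0\zW^\star(\Delta^2))^{-1}$ splits into a $0$-form part and $ds$- and $dt$-parts; under the bijections of~(\ref{important_bijection}), the two horn edges $01,12$ transport via $B_1^1,B_1^0$ to $(\zm=g,(\zd+d)\zb_{01})$ and $(\zm=g,(\zd+d)\zb_{12})$ with $\zm$ constant, and Renshaw's method (end of Subsection~\ref{Kan property}) extends $\zb_{01},\zb_{12}$ to $\bar\zb\in L_0\0\zW^0(\Delta^2)$ with vertex values $\bar\zb|_{e_0}=-\theta_1s^{-1}$, $\bar\zb|_{e_1}=0$, $\bar\zb|_{e_2}=\zt_1s^{-1}$ (so $\bar\ze_2^1\bar\zb=0$ and one uses ${\cal B}_2^1$). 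The degree bookkeeping~(\ref{CompAlphaBeta})--(\ref{non_zero_maps}) that truncated the iteration after one step in Theorem~\ref{KanHom1} applies unchanged for $n=2$ — each nested and each $\bar{\cal L}_3$-term carries a factor of type ${\cal L}_3(\za_1,\za_2,\zb)$ or ${\cal L}_2({\cal L}_2(\za,\zb_1),\zb_2)$ whose components vanish by Proposition~\ref{proposition_about_lie_inf_on_space_of_homomorphisms} — so, after the same total-derivative simplification as in the derivation of~(\ref{HomotDef}),
$$\Phi={\cal B}_2^1\bigl(\zm,(\zd+d)\bar\zb\bigr)=\zm+(\zd+d)\bar\zb+\bar{\cal L}_2(\zm,\bar\zb)+\tfrac12\bar{\cal L}_2(\zd\bar\zb,\bar\zb)\in\op{MC}_2(\bar L)\,.$$
Here $\zm$, $\bar{\cal L}_2(\zm,\bar\zb)$, $\bar{\cal L}_2(\zd\bar\zb,\bar\zb)$ and $\zd\bar\zb$ are all $0$-forms, so the only contribution to $\Phi$ that is not a $0$-form is $(\id\0 d)\bar\zb$.

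Then I would restrict to the edge $02$ and apply $S_1^i$. Since the ${\tt DGCA}$-map $d^2_1=\zW^\star(\zd^2_1)$ commutes with $\zd$ and with $d$, the face $\Phi|_{02}$ is again of the shape above with $\bar\zb$ replaced by $\bar\zb|_{02}$; as the only non-$0$-form summand of $\Phi$ is $(\id\0 d)\bar\zb$ and $h_1^i$ annihilates $0$-forms, one gets $h_1^i(\Phi|_{02})=\bar\zb|_{02}-\ze_1^i\bar\zb|_{02}$, whence, by the computation of~(\ref{HomotPara}), $S_1^i(\Phi|_{02})=\bigl(\bar\zb|_{e_2}-\bar\zb|_{e_0}\bigr)s=\bigl(\zt_1s^{-1}+\theta_1s^{-1}\bigr)s=\zt_1+\theta_1$. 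The same computation on the edges $01,12$ returns $\theta_1,\zt_1$, confirming that $\Phi$ indeed fills the prescribed horn; hence $\zt_1\circ_1\theta_1=\zt_1+\theta_1$. As a consistency check one verifies directly that $\zt_1+\theta_1$ satisfies the relations (a)--(c) of Definition~\ref{HomTheo} for the pair $(f,h)$: (a) and (b) hold by additivity, while summing the two instances of (c) for $\theta_1,\zt_1$ and comparing with (c) for $\zt_1+\theta_1$ reduces to the identity $m_2(m_1\theta_1,\zt_1)=m_2(\theta_1,m_1\zt_1)$, which is relation (b) of Proposition~\ref{2term_Loday_in_algebra}.

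The difficulty is bookkeeping rather than concept, and it sits in two places. First, one must thread the $h_n^i$- and $\ze_n^i$-conventions so that the three edge-restrictions of the \emph{single} filler $\Phi$ are precisely the chosen lifts of $\theta_1$, $\zt_1$ and the sought composite — this is what forces the asymmetric normalization of the two lifts and the particular Renshaw extension above — and one must carry the passage from Getzler's cohomological sign convention (in force throughout Subsection~\ref{Kan property}) back to the Lada--Stasheff homological one, exactly as at the start of the proof of Theorem~\ref{KanHom1}. Second, and this is the content hidden behind the word ``Definition'' in the statement, one must check that $S_1^i(d_1\Phi)$ is independent of the chosen lifts and of the filler: any two fillers of a fixed inner horn differ by a $3$-simplex all of whose other faces are built from the fixed horn data and degeneracies, so their $d_1$-faces are $\infty$-homotopic, and one must show $S_1^i$ constant on such a family — the one step that genuinely uses the Kan structure of $\op{MC}_\bullet(\bar L)$ and not merely the $2$-term degree count. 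Granting these, the computation above yields $\zt_1\circ_1\theta_1=\zt_1+\theta_1$.
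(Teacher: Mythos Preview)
Your proposal is correct and follows essentially the same route as the paper: you choose the same asymmetric lifts $\za'={\cal B}_1^1(g,(\zd+d)\zb')$, $\za''={\cal B}_1^0(g,(\zd+d)\zb'')$ with $\zb'(1)=\zb''(0)=0$, extend to $\bar\zb$ on $\zD^2$ via Renshaw, compute the filler ${\cal B}_2^1(g,(\zd+d)\bar\zb)$ by the same two-step truncation argument, restrict to the edge $02$, and read off $S_1^i(\Phi|_{02})=(\bar\zb|_{e_2}-\bar\zb|_{e_0})s=\zt_1+\theta_1$. Your additions --- the direct verification that $\zt_1+\theta_1$ satisfies relations (a)--(c) of Definition~\ref{HomTheo} via Proposition~\ref{2term_Loday_in_algebra}(b), and the explicit acknowledgment that independence of the lifts and of the filler is what the word ``Definition'' in the statement is absorbing --- go slightly beyond the paper, which simply makes the specific choices, computes, and \emph{declares} the result as the composite (the well-definedness being asserted only in the Remark that follows).
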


We will actually lift $\zy_1,\zt_1\in{\cal T}$ to $\za',\za''\in\op{MC}_1(\bar L)$ (which involves choices), then compose these lifts in the infinity groupoid $\op{MC}_\bullet(\bar L)$ (which is not a well-defined operation), and finally project the result back to ${\cal T}$ (despite all the intermediate choices, the final result will turn out to be well-defined).

\begin{proof} Let now $n=2$, take $\mu\in\textrm{MC}(L)$ and $\zn=(\zd+d)\zb\in\op{mc}_2^1(\bar L)$, then construct $\za={\cal B}_2^1(\zm,\zn)$. The computation is similar to that in the 1-dimensional case and gives the same result:
\bea \ga=\mu+(\gd+d)\zb+\bar{\SC{L}}_2(\mu,\zb)+\frac12\bar{\SC{L}}_2(\gd \zb,\zb)\;.\label{HomotCompIntermed}\eea

To obtain $\zt_1\circ_1\zy_1$, proceed as in (\ref{preimage}) and lift $\zy_1$ (resp., $\zt_1$) to $$\za':={\cal B}_1^1(g,(\zd+d)\zb')\in{\cal I}(f,g)\subset\op{MC}_1(\bar L)\quad (\text{resp.,}\;\; \za'':={\cal B}_1^0(g,(\zd+d)\zb'')\in{\cal I}(g,h)\subset\op{MC}_1(\bar L))\;,$$ where $$\zb'(0)=-\zy_1s^{-1}\;\text{and}\;\zb'(1)=0\quad (\text{resp.,}\;\; \zb''(0)=0\;\text{and}\;\zb''(1)=\zt_1s^{-1})\;.$$ As mentioned above, we have by construction \be\label{Thetas} \zy_1=(\zb'(1)-\zb'(0))s\quad(\text{resp.,}\;\;\zt_1=(\zb''(1)-\zb''(0))s)\;.\ee

If we view $\za'$ (resp., $\za''$) as defined on the face $01$ (resp., $12$) of $\zD^2$, the equation $\ze_1^1\za'=\ze_1^0\za''=g$ reads $\ze_2^1\za'=\ze_2^1\za''=g=:\zm$. This means that $$(\za',\za'')\in\op{SSet}(\zL^1[2],\op{MC}_\bullet(\bar L))\;.$$ We now follow the extension square (\ref{commute_square}). The left arrow leads to $$(\zm;(\zd+d)\zb',(\zd+d)\zb'')\in\op{SSet}(\zL^1[2],\op{MC}(L)\times\op{mc}_\bullet(\bar L))\;,$$ the bottom arrow to $$(\zm,(\zd+d)\zb)\in\op{MC}(L)\times\op{mc}_2^1(\bar L)\;,$$ where $\zb$ is {\it any extension} of $(\zb',\zb'')$ to $\zD^2$, and the right arrow provides $\za\in\op{MC}_2(\bar L)$ given by Equation (\ref{HomotCompIntermed}). From Subsection \ref{InftyCatComp}, we know that all composites of $\za',\za''$ are $\infty$-2-homotopic and that a possible composite is obtained by restricting $\za$ to $02.$ This restriction $(-)|_{02}$ is given by the ${\tt DGCA}$-map $d_1^2$. Hence, we get $$\ga|_{02}=\mu+(\gd+d)\zb|_{02}+\bar{\SC{L}}_2(\mu,\zb|_{02})+\frac12\bar{\SC{L}}_2(\gd \zb|_{02},\zb|_{02})\in{\cal I}(f,h)\subset\op{MC}_1(\bar L)\;.$$

We now choose the projection $S_1^0\za|_{02}\in{\cal T}(f,h)$ of the composite-candidate of the chosen lifts of $\zy_1,\zt_1$, as composite $\zt_1\circ_1\zy_1$. Since $$h_1^0\za|_{02}=-(\zd+d)h_1^0\zb|_{02}+\zb|_{02}-\zb(0)=\zb|_{02}-\zb(0)\;,$$ we get
$$ S_1^0\za|_{02}=(\zb|_{02}(2)-\zb(0)-\zb|_{02}(0)+\zb(0))s=(\zb(2)-\zb(0))s=$$ $$(\zb''(2)-\zb''(1))s+(\zb'(1)-\zb'(0))s=\zt_1+\zy_1\;,$$ in view of (\ref{Thetas}). Hence, by definition, the vertical composite of $\zy_1\in{\cal T}(f,g)$ and $\zt_1\in{\cal T}(g,h)$ is given by \be\label{VertComp}\zt_1\circ_1\zy_1=\zt_1+\zy_1\in{\cal T}(f,h)\;.\ee
\end{proof}

\begin{rem} The composition of elements of ${\cal I}=\op{MC}_1(\bar L)$ in the infinity groupoid $\op{MC}_\bullet(\bar L)$, which is defined and associative only up to higher morphisms, projects to a well-defined and associative vertical composition in ${\cal T}$.\end{rem}

Just as for concordances, horizontal composition of $\infty$-homotopies is without problems. The horizontal composite of $\zy_1\in{\cal T}(f,g)$ and $\zt_1\in{\cal T}(f',g')$, where $f,g:V\to W$ and $f',g':W\to X$ act between 2-term Leibniz infinity algebras, is defined by \be\label{HorzComp} \zt_1\circ_0\zy_1=g'_1\zy_1+\zt_1f_1 = f'_1\zy_1+\zt_1g_1\;.\ee The two definitions coincide, since $\zy_1,\zt_1$ are chain homotopies between the chain maps $f,g$ and $f',g'$, respectively, see Definition \ref{HomTheo}, Relations (\ref{homotopy_a}) and (\ref{homotopy_b}). The identity associated to a 2-term $\infty$-morphism is just the zero-map. As announced in \cite{BC04} (in the Lie case and without information about composition), we have the

\begin{prop} There is a strict 2-category ${\tt 2Lei_{\infty}}$-${\tt Alg}$ of 2-term Leibniz infinity algebras.\end{prop}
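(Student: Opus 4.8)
The plan is to assemble the 2-category $\tt 2Lei_{\infty}$-$\tt Alg$ piece by piece, checking the strict 2-categorical axioms, while importing as much as possible from the structure already developed. The objects are the 2-term Leibniz infinity algebras of Proposition \ref{2term_Loday_in_algebra}, the 1-morphisms are the infinity morphisms of Proposition \ref{2term_Loday_morphism} with the composition inherited from $\tt Lei_{\infty}$-$\tt Alg$ (Subsection \ref{LeibInftyMorphComp}), and the 2-morphisms are the 2-term $\infty$-homotopies of Definition \ref{HomTheo}. Vertical composition of 2-morphisms is $\zt_1\circ_1\zy_1=\zt_1+\zy_1$ by Theorem \ref{KomComp2}, and horizontal composition is given by the formula \eqref{HorzComp}; identities are the identity infinity morphism on each object (horizontal unit) and the zero map $\zy_1=0$ (vertical unit on each 1-morphism).

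First I would confirm that each listed datum is well-defined: that the composite of two infinity morphisms of 2-term algebras is again such a morphism (immediate from the corollary that $\tt 2Lei_{\infty}$ is a full subcategory of $\tt Lei_{\infty}$-$\tt Alg$, so composition and identities for 1-morphisms are inherited for free and associativity of $\circ$ and the 1-unit laws come gratis); that $\zt_1+\zy_1$ satisfies relations (a)-(c) of Definition \ref{HomTheo} when $\zy_1:f\Rightarrow g$ and $\zt_1:g\Rightarrow h$, which is the content of Theorem \ref{KomComp2}; and that the two expressions in \eqref{HorzComp} agree and define a 2-term $\infty$-homotopy $\zt_1\circ_0\zy_1:f'\circ f\Rightarrow g'\circ g$ — here one uses that relations \eqref{homotopy_a}, \eqref{homotopy_b} say precisely that $\zy_1,\zt_1$ are chain homotopies between the underlying chain maps, so that $g'_1\zy_1+\zt_1 f_1 = f'_1\zy_1+\zt_1 g_1$ after inserting $g'_1-f'_1=m_1\zt_1$ (on $W_0$) and $g_1-f_1=m_1\zy_1$, and relation \eqref{homotopy_c} for the composite follows by a direct expansion using \eqref{morphism_b}, \eqref{morphism_c} for $f,g,f',g'$.

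Next I would verify the 2-categorical axioms themselves. Vertical composition is associative and unital on the nose because $(\zt_1+\zy_1)$ is literally addition of linear maps $V\to W$, with unit the zero map; so $(\op{Hom}(V,W),\circ_1,0)$ is a (small) category — in fact each hom-category is essentially a groupoid of affine type, every 2-morphism $\zy_1:f\Rightarrow g$ being invertible with inverse $-\zy_1:g\Rightarrow f$. Horizontal composition of 1-morphisms is associative and unital by inheritance from $\tt Lei_{\infty}$-$\tt Alg$. It then remains to check: (i) horizontal composition of 2-morphisms is functorial in the pair, i.e. the interchange law $(\zt_1'\circ_1\zt_1)\circ_0(\zy_1'\circ_1\zy_1)=(\zt_1'\circ_0\zy_1')\circ_1(\zt_1\circ_0\zy_1)$; (ii) $\circ_0$ preserves vertical identities, $\op{id}_{g}\circ_0\op{id}_{f}=\op{id}_{g\circ f}$ (here $0\circ_0 0=0$, immediate from \eqref{HorzComp}); and (iii) the associativity and unit constraints for $\circ_0$ are strict also at the level of 2-cells, i.e. whiskering is strictly associative and unital — again automatic since the underlying data are honest linear maps and the bracket operations are strictly associative in the relevant sense after the identifications already made.

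The main obstacle, and the only computation with any content, is the interchange law (i). Writing it out with \eqref{HorzComp} and $\circ_1=+$, both sides expand to expressions of the form $g'_1\zy_1 + g'_1\zy_1' + \zt_1 f_1 + \zt_1' f_1$ versus a regrouping thereof — wait, more precisely one must check that $g'_1(\zy_1+\zy_1')+(\zt_1+\zt_1')f_1$ equals $(g'_1\zy_1+\zt_1 f_1)+(g'_1\zy_1'+\zt_1' g_1')$ or the appropriate variant, and the subtlety is that the two admissible forms of each horizontal composite in \eqref{HorzComp} must be chosen coherently; reconciling them uses once more that $g'_1-f'_1=m_1\zt_1$ together with relations \eqref{homotopy_a}, \eqref{homotopy_b}, exactly as in the proof that the two expressions in \eqref{HorzComp} coincide. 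I expect this to reduce, after cancellation, to an identity that holds because $m_1$ is a differential and the relevant maps are chain homotopies; so the real work is bookkeeping of signs and degrees in the 2-term situation rather than anything structurally deep. Once (i) is in hand, all remaining axioms are either inherited or trivially satisfied by linearity, and the proposition follows.
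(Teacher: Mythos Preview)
Your outline is correct and, in fact, more detailed than what the paper offers: the paper states the proposition without proof, regarding the verification of the strict 2-category axioms as routine once vertical composition (Theorem \ref{KomComp2}), horizontal composition \eqref{HorzComp}, and the identity 2-morphisms (zero maps) have been put in place. Your plan of inheriting the 1-categorical structure from the full subcategory ${\tt 2Lei_{\infty}}\subset{\tt Lei_{\infty}}$-${\tt Alg}$, observing that $\circ_1=+$ is trivially associative and unital, and then reducing everything to the interchange law is exactly the intended route.

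One small cleanup: your interchange computation is slightly garbled (the expression ``$g'_1\zy_1'+\zt_1' g_1'$'' has a stray prime, and the ``wait'' aside should be removed). For the record, with $\zy_1:f\Rightarrow g$, $\zy_1':g\Rightarrow h$ in $\op{Hom}(V,W)$ and $\zt_1:f'\Rightarrow g'$, $\zt_1':g'\Rightarrow h'$ in $\op{Hom}(W,X)$, the two sides differ by $(h'_1-g'_1)\zy_1-\zt_1'(g_1-f_1)$ evaluated on $V_0$; relation \eqref{homotopy_b} for $\zt_1'$ gives $(h'_1-g'_1)\zy_1 x=\zt_1' m_1\zy_1 x$, while relation \eqref{homotopy_a} for $\zy_1$ gives $\zt_1'(g_1-f_1)x=\zt_1' m_1\zy_1 x$, and these cancel. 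So your expectation that ``it reduces to an identity because $m_1$ is a differential and the maps are chain homotopies'' is precisely right.
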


\section{2-Category of categorified Leibniz algebras}

\subsection{Category of Leibniz 2-algebras}

Leibniz 2-algebras are categorified Leibniz structures on a categorified vector space. More precisely,

\begin{defi}
A \emph{Leibniz 2-algebra} $(L,[-,-],{\mathbf J})$ is a linear category $L$ equipped with
\begin{enumerate}
\item a \emph{bracket} $[-,-]$, i.e. a bilinear functor $[-,-]:L\times L\rightarrow L$, and
\item a \emph{Jacobiator} $\mathbf{J}$, i.e. a trilinear natural transformation
$$
\mathbf{J}_{x,y,z}:[x,[y,z]]\rightarrow[[x,y],z]+[y,[x,z]],\quad x,y,z\in L_0,
$$
\end{enumerate}
which verify, for any $w,x,y,z\in L_0,$ the \emph{Jacobiator identity}
\begin{equation}\label{Jacobiator_diagramm}
\hspace{-5mm}
\xymatrix@C=4pc@R=4pc{
&[w,[x,[y,z]]]\ar@{->}
[dr]_-{\mathbf{1}}\ar@{->}[dl]^-{[\mathbf{1}_w,\mathbf{J}_{x,y,z}]}\\
[w,[[x,y],z]]+[w,[y,[x,z]]]\ar@{->}
[d]^-{\mathbf{J}_{w,[x,y],z}+\mathbf{J}_{w,y,[x,z]}}&&
[w,[x,[y,z]]]\ar@{->}
[d]_-{\mathbf{J}_{w,x,[y,z]}}\\
{\begin{gathered}[t]
[[w,[x,y]],z]+[[x,y],[w,z]]\\
+[[w,y],[x,z]]+[y,[w,[x,z]]]
\end{gathered}}\ar@{->}
[d]^-{\mathbf{1}+[\mathbf{1}_y,\mathbf{J}_{w,x,z}]}
&&[[w,x],[y,z]]+[x,[w,[y,z]]]\ar@{->}
[d]_-{1+[\mathbf{1}_x,\mathbf{J}_{w,y,z}]}\\
{\begin{gathered}[t]
[[w,[x,y]],z]+[[x,y],[w,z]]\\
+[[w,y],[x,z]]+[y,[[w,x],z]]\\
+[y,[x,[w,z]]]
\end{gathered}}\ar@{->}
[dr]^-{[\mathbf{J}_{w,x,y},\mathbf{1}_z]}&&
{\begin{gathered}[t]
[[w,x],[y,z]]+[x,[[w,y],z]]\\
+[x,[y,[w,z]]]
\end{gathered}}\ar@{->}
[ld]^-{\mathbf{J}_{[w,x],y,z}+\mathbf{J}_{x,[w,y],z}+\mathbf{J}_{x,y,[w,z]}}\\
&{\begin{gathered}[t]
[[[w,x],y],z]+[[x,[w,y]],z]\\
+[[x,y],[w,z]]+[[w,y],[x,z]]\\
+[y,[[w,x],z]]+[y,[x,[w,z]]]
\end{gathered}}
}
\end{equation}
\end{defi}

The Jacobiator identity is a coherence law that should be thought of as a higher Jacobi identity for the Jacobiator.\medskip

The preceding hierarchy `category, functor, natural transformation' together with the coherence law is entirely similar to the known hierarchy `linear, bilinear, trilinear maps $l_1,l_2,l_3$' with the $L_{\infty}$-conditions (a)-(e). More precisely,

\begin{prop}
There is a 1-to-1 correspondence between Leibniz 2-algebras and 2-term Leibniz infinity algebras.
\end{prop}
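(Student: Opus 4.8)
The plan is to construct a functor in each direction and check they are mutually inverse, using the explicit description of 2-term Leibniz infinity algebras in Proposition \ref{2term_Loday_in_algebra} and the defining data of a Leibniz 2-algebra. First I would recall the standard dictionary between a linear category $L$ and a 2-term cochain complex: a linear category is equivalent to its arrow data, namely the pair $(L_0, L_1)$ where $L_0$ is the object vector space, $L_1$ is the vector space of morphisms with source $0$, together with the source/target maps; equivalently one takes $C_0 = L_0$ and $C_1 = \ker(s) \subseteq L_1$ with differential $\partial = t|_{\ker s} : C_1 \to C_0$. Under this correspondence I would set $V_0 := C_0$, $V_1 := C_1$, and $l_1 := \partial$. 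This is the degree $-1$ map required.

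Next I would translate the bracket functor and the Jacobiator. A bilinear functor $[-,-] : L \times L \to L$ is determined by its action on objects, a bilinear map $V_0 \times V_0 \to V_0$, and its action on morphisms; functoriality and bilinearity force the morphism-level data to be captured by a bilinear map that, after the usual identification, yields maps $V_0 \otimes V_1 \to V_1$ and $V_1 \otimes V_0 \to V_1$ extending the object map compatibly with $\partial$. Together these assemble into the single degree $0$ bracket $l_2$ on $V = V_0 \oplus V_1$, and the compatibility of $[-,-]$ with source and target is exactly identities \eqref{identity_a}(a) and \eqref{identity_b}(b). The Jacobiator $\mathbf J_{x,y,z}$ is a trilinear natural transformation; its component at $(x,y,z) \in V_0^{\times 3}$ is a morphism in $L$, hence (after subtracting the identity-on-source part) an element of $V_1$ depending trilinearly on $x,y,z$, giving the degree $1$ map $l_3 : V_0^{\otimes 3} \to V_1$. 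The condition that $\mathbf J$ is a natural transformation (i.e.\ the naturality square for each morphism, which by bilinearity reduces to morphisms of the form $l_1 h$) is precisely identity \eqref{identity_c}(c) together with the three equations in \eqref{identity_d}(d). Finally, the Jacobiator identity \eqref{Jacobiator_diagramm} — a commuting pentagon-like diagram of morphisms in $L$ — when evaluated and written out in components, is exactly the single relation \eqref{identity_e}(e). The converse functor reverses each of these steps: given $(V, l_1, l_2, l_3)$, define $L$ to have objects $V_0$ and morphisms $x \to y$ the affine set $\{h \in V_1 : l_1 h = y - x\}$ with the evident composition, define $[-,-]$ on objects by $l_2$ and on morphisms using $l_2$ restricted to the mixed-degree components, and define $\mathbf J_{x,y,z}$ to be $l_3(x,y,z)$ regarded as a morphism from $[x,[y,z]]$ to $[[x,y],z]+[y,[x,z]]$, which is well-typed precisely by \eqref{identity_c}(c).

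Then I would check the two constructions are mutually inverse: starting from a Leibniz 2-algebra, extracting $(V,l_i)$, and rebuilding the category recovers $L$ up to the canonical equivalence (in fact isomorphism, once one fixes the skeletal model), and the bracket and Jacobiator are recovered on the nose; conversely the round trip on $(V, l_1, l_2, l_3)$ is the identity by construction. I would also verify that this correspondence is compatible with morphisms — a morphism of Leibniz 2-algebras (a functor strictly preserving the bracket up to a specified natural transformation) corresponds to an infinity morphism $(f_1, f_2)$ in the sense of Proposition \ref{2term_Loday_morphism}, via the same object/morphism bookkeeping — so that the statement upgrades, as in the Lie case \cite{BC04} and as used implicitly in the next section, to an equivalence (indeed, with the 2-morphisms of Definition \ref{HomTheo}, a 2-equivalence of 2-categories).

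The main obstacle is purely bookkeeping, but genuinely delicate: matching signs and the placement of the $l_2$-terms in identities \eqref{identity_d}(d) and especially in \eqref{identity_e}(e) against the ten arrows of the Jacobiator diagram \eqref{Jacobiator_diagramm}. One must be careful that the natural transformation $\mathbf J$ is read as a morphism whose \emph{source} is $[x,[y,z]]$ (not its target), which fixes the sign of $l_1 l_3$ in \eqref{identity_c}(c), and that each composite path in the pentagon is evaluated with the correct whiskering $[\mathbf 1, \mathbf J]$ versus $[\mathbf J, \mathbf 1]$ — these whiskerings are exactly the $l_2(-, l_3(\cdots))$ and $l_2(l_3(\cdots), -)$ terms, and getting their signs consistent with the homological degree conventions ($|l_3| = 1$) is where all the care is needed. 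Everything else is a direct unwinding of definitions. I would therefore present the construction in both directions, state that the verification of the axioms is a term-by-term comparison, and carry out explicitly only the translation of the Jacobiator identity into \eqref{identity_e}(e), since that is the one identity where the correspondence is not immediately transparent.
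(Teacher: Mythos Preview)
Your proposal is correct and follows the standard Baez--Crans dictionary from \cite{BC04}, which is exactly what the paper invokes: the paper does not give its own proof of this proposition, but simply records that it was proved in the Lie case in \cite{BC04} and announced for the Leibniz case in \cite{SL10}. Your outline is thus more detailed than what the paper provides, and is the expected adaptation of the Lie argument to the Leibniz setting.
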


This proposition was proved in the Lie case in \cite{BC04} and announced for the Leibniz case in \cite{SL10}. A generalization of the latter correspondence to Lie 3-algebras and 3-term Lie infinity algebras can be found in \cite{KMP11}. This paper allows to understand that the correspondence between higher categorified algebras and truncated infinity algebras is subject to cohomological conditions, and to see how the coherence law corresponds to the last nontrivial $L_{\infty}$-condition.\medskip

The definition of Leibniz 2-algebra morphisms is God-given: such a morphism must be a functor that respects the bracket up to a natural transformation, which in turn respects the Jacobiator. More precisely,

\begin{defi}
Let $(L,[-,-],\mathbf{J})$ and $(L',[-,-]',\mathbf{J}')$ be Leibniz 2-algebras $($in the following, we write $[-,-], \mathbf{J}$ instead of $\;[-,-]',\mathbf{J}'$$)$. A \emph{morphism $(F,\mathbf{F})$ of Leibniz 2-algebras} from $L$ to $L'$ consists of
\begin{enumerate}
\item a linear functor $F:L\rightarrow L'$, and
\item a bilinear natural transformation $$\mathbf{F}_{x,y}:[Fx,Fy]\rightarrow F[x,y],\quad x,y\in L_0\;,$$\end{enumerate}
which make the following diagram commute
\begin{equation}\label{diagram_for_morphism}
\xymatrix@C=10pc@R=3pc{
[Fx,[Fy,Fz]]\ar@{->}[d]^{[\mathbf{1}_x,\mathbf{F}_{y,z}]}\ar@{->}[r]^-{\mathbf{J}_{Fx,Fy,Fz}}&[[Fx,Fy],Fz]+[Fy,[Fx,Fz]]\ar@{->}[d]^{[\mathbf{F}_{x,y},\mathbf{1}_z]+[\mathbf{1}_y,\mathbf{F}_{x,z}]}\\
[Fx,F[y,z]]\ar@{->}[d]^{\mathbf{F}_{x,[y,z]}}&[F[x,y],Fz]+[Fy,F[x,z]]\ar@{->}[d]^{\mathbf{F}_{[x,y],z}+\mathbf{F}_{y,[x,z]}}\\
F[x,[y,z]]\ar@{->}[r]^{F\mathbf{J}_{x,y,z}}&F[[x,y],z]+F[y,[x,z]]
}
\end{equation}
\end{defi}
\begin{prop}
\vspace{2mm}
There is a 1-to-1 correspondence between Leibniz 2-algebra morphisms and 2-term Leibniz infinity algebra morphisms.
\end{prop}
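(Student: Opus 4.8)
The plan is to push the dictionary ``linear category $\leftrightarrow$ 2-term chain complex'' through the data of a morphism, exactly as in the proof of the preceding proposition and as done for the Lie case in \cite{BC04}. Recall that a linear category $L$ is the same as a complex $l_1\colon V_1\to V_0$ concentrated in degrees $0$ and $1$: the objects are $L_0=V_0$, the morphisms form the vector space $V_0\oplus V_1$, a morphism $(a,h)$ has source $a$ and target $a+l_1h$, and composition is addition of the $V_1$-components (so, in particular, ``vertical composition'' of natural transformations between linear functors corresponds to addition of their $W_1$-valued components). Under this identification a linear functor $F\colon L\to L'$ is precisely a chain map, i.e. a pair of linear maps $f_1\colon V_0\to W_0$, $f_1\colon V_1\to W_1$ with $m_1f_1=f_1l_1$ --- which is Relation (\ref{morphism_a}) of Proposition~\ref{2term_Loday_morphism}.

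Next I would analyse the natural transformation $\mathbf F$. Its component $\mathbf F_{x,y}$ is a morphism of $L'$ with prescribed source $[Fx,Fy]=m_2(f_1x,f_1y)$ and prescribed target $F[x,y]=f_1l_2(x,y)$, hence of the form $(m_2(f_1x,f_1y),f_2(x,y))$ for a uniquely determined $f_2(x,y)\in W_1$; the endpoint condition is exactly $m_1f_2(x,y)=f_1l_2(x,y)-m_2(f_1x,f_1y)$, i.e. Relation (\ref{morphism_b}), and bilinearity of $\mathbf F$ is bilinearity of $f_2$. So $(F,\mathbf F)$ and $(f_1,f_2)$ determine each other, and it remains to match the two remaining conditions on $(F,\mathbf F)$ --- naturality of $\mathbf F$ and commutativity of the diagram (\ref{diagram_for_morphism}) --- with Relations (\ref{morphism_c}) and (\ref{morphism_d}). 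Since a natural transformation between linear functors is fixed by its components on objects, naturality of $\mathbf F$ need only be tested on a generating family of morphisms; testing it on the morphisms coming from $h\in V_1$ in the first and in the second slot of $[-,-]$, and using that the bifunctor $[-,-]$ and the functor $F$ act on morphisms through $l_2$ and $f_1$ respectively, unwinds to the \emph{two} equations of Relation (\ref{morphism_c}) --- the asymmetry of the Leibniz bracket being exactly why there are two of them. Finally, chasing the diagram (\ref{diagram_for_morphism}) on the $W_1$-components of the displayed $2$-cells, and substituting the correspondences $\mathbf J\leftrightarrow l_3$ (from the preceding proposition) and $\mathbf F\leftrightarrow f_2$ just established, reproduces the single equation (\ref{morphism_d}); conversely (\ref{morphism_d}) forces the diagram to commute.

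The converse direction is then immediate: given $(f_1,f_2)$ satisfying (\ref{morphism_a})--(\ref{morphism_d}), let $F$ be the chain map $f_1$ and set $\mathbf F_{x,y}:=(m_2(f_1x,f_1y),f_2(x,y))$; Relations (\ref{morphism_b}), (\ref{morphism_c}), (\ref{morphism_d}) guarantee respectively that $\mathbf F$ is well defined with the right endpoints, natural, and compatible with the Jacobiators $\mathbf J$, $\mathbf J'$, so $(F,\mathbf F)$ is a morphism of Leibniz $2$-algebras. The two assignments are visibly mutually inverse, which gives the claimed bijection (and, in fact, an isomorphism of categories, compatibly with composition).

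I expect the real work to be the last step of the second paragraph: identifying the hexagon (\ref{diagram_for_morphism}), whose arrows are signed composites built from $\mathbf J$, $\mathbf J'$, $\mathbf F$ and identities among objects of $L'$, with the one equation (\ref{morphism_d}), i.e. keeping precise track of which composite of natural transformations becomes which signed sum of $W_1$-valued maps and checking that the signs produced by the diagram are exactly the ones in Proposition~\ref{2term_Loday_morphism}. This bookkeeping is carried out for the Lie case in \cite{BC04}; the coherence law (\ref{Jacobiator_diagramm}) for Leibniz $2$-algebras was set up in \cite{SL10} precisely so that the analogous matching works, and the general principle ``top coherence law $\leftrightarrow$ last $L_\infty$-relation'' is discussed in \cite{KMP11}.
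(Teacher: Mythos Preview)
Your sketch is correct and is precisely the argument the paper has in mind: the paper does not actually prove this proposition but simply refers the reader to \cite{BC04} and \cite{SL10}, and what you have outlined is exactly the Baez--Crans dictionary transported from the Lie to the Leibniz setting, with the expected doubling in relation (\ref{morphism_c}) coming from the non-symmetry of the bracket.
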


For a proof, see \cite{BC04} and \cite {SL10}.\medskip

Composition of Leibniz 2-algebra morphisms $(F,\mathbf{F})$ is naturally given by composition of functors and whiskering of functors and natural transformations.

\begin{prop} There is a category ${\tt Lei2}$ of Leibniz 2-algebras and morphisms.\end{prop}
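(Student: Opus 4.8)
The plan is to verify directly the three category axioms for the data just described: that the composite of two Leibniz $2$-algebra morphisms is again a Leibniz $2$-algebra morphism, that composition is associative, and that identities exist. Given morphisms $(F,\mathbf F)\colon L\to L'$ and $(G,\mathbf G)\colon L'\to L''$, I would set their composite to be $(G\circ F,\mathbf G * \mathbf F)$, where $G\circ F$ is the composition of linear functors and $(\mathbf G * \mathbf F)_{x,y}\colon [GFx,GFy]\to GF[x,y]$ is the whiskered composite $(\mathbf G * \mathbf F)_{x,y} := G(\mathbf F_{x,y})\circ \mathbf G_{Fx,Fy}$. That this is a bilinear natural transformation is immediate: $\mathbf G_{Fx,Fy}$ is natural and bilinear in $Fx,Fy$, hence in $x,y$ since $F$ is a linear functor; $G(\mathbf F_{x,y})$ is natural and bilinear because $G$ is a linear functor and $\mathbf F$ is; and vertical composition of natural transformations is natural.

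The crucial point is to check that $(G\circ F,\mathbf G * \mathbf F)$ satisfies the coherence diagram (\ref{diagram_for_morphism}). I would do this by a pasting argument: the hexagon for the composite, with extreme corners $[GFx,[GFy,GFz]]$ and $GF[[x,y],z]+GF[y,[x,z]]$, decomposes into commuting regions. One region is the image under the functor $G$ of the coherence hexagon of $(F,\mathbf F)$ — it commutes because applying $G$ to a commuting diagram in $L'$ yields a commuting diagram in $L''$. A second region is the coherence hexagon of $(G,\mathbf G)$ evaluated at the objects $Fx,Fy,Fz\in L'_0$. The remaining regions are naturality squares for $\mathbf G$ with respect to the $L'$-morphisms $\mathbf F_{y,z}$, $\mathbf F_{x,y}$, $\mathbf F_{x,z}$, $\ldots\,$, together with instances of the bifunctoriality of the brackets $[-,-]'$ and $[-,-]''$; each of these commutes by the axioms for natural transformations and functors. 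Since every region commutes, the whole diagram commutes, and the composite is a genuine Leibniz $2$-algebra morphism.

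Associativity then follows from the strict associativity of functor composition together with the identity $(\mathbf H *(\mathbf G * \mathbf F))_{x,y} = H(G(\mathbf F_{x,y}))\circ H(\mathbf G_{Fx,Fy})\circ \mathbf H_{GFx,GFy} = ((\mathbf H * \mathbf G)* \mathbf F)_{x,y}$, a one-line check using functoriality of $H$. For the identity at $L$ I would take $(\mathrm{Id}_L,\mathbf 1)$ with $\mathbf 1_{x,y}=\mathrm{id}_{[x,y]}$: its coherence diagram (\ref{diagram_for_morphism}) commutes trivially, since every $\mathbf F$-labelled arrow becomes an identity and both legs reduce to $\mathbf J_{x,y,z}$, and $(\mathrm{Id}_L,\mathbf 1)$ is a two-sided unit for $*$ because $G(\mathrm{id})=\mathrm{id}$. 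Alternatively — and this yields a second, essentially bookkeeping-free proof — one may transport the category structure across the $1$-to-$1$ correspondences between Leibniz $2$-algebras and $2$-term Leibniz infinity algebras, and between their morphisms, once one observes that this correspondence intertwines whiskering composition with the composition of infinity morphisms; ${\tt Lei2}$ then inherits the axioms from the (strict $2$-, hence in particular ordinary) category ${\tt 2Lei_{\infty}}$-${\tt Alg}$ of the previous section.

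The step I expect to be the main obstacle is the coherence check for composites: although it is "only" a diagram chase, diagram (\ref{diagram_for_morphism}) is a large hexagon, and one must carefully identify which sub-polygons are the image of the $(F,\mathbf F)$-hexagon, which is the $(G,\mathbf G)$-hexagon, and which are naturality squares, while keeping track of the splittings of the arrows through the bifunctoriality of the brackets. Everything else — naturality and bilinearity of $\mathbf G * \mathbf F$, associativity, and the unit laws — is routine.
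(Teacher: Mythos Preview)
The paper does not actually prove this proposition: it is simply stated, with the preceding sentence ``Composition of Leibniz 2-algebra morphisms $(F,\mathbf{F})$ is naturally given by composition of functors and whiskering of functors and natural transformations'' serving as the entire justification. Your proposal is therefore not so much a different route as a fleshing-out of what the paper leaves implicit.

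Your direct verification is correct and is exactly the standard argument one would give: the whiskered composite $(\mathbf G * \mathbf F)_{x,y} = G(\mathbf F_{x,y})\circ \mathbf G_{Fx,Fy}$ is the intended composition, and the pasting decomposition of the coherence hexagon into the $(G,\mathbf G)$-hexagon at $(Fx,Fy,Fz)$, the $G$-image of the $(F,\mathbf F)$-hexagon, and naturality squares for $\mathbf G$ is the right way to see that composites again satisfy diagram~(\ref{diagram_for_morphism}). Associativity and units are, as you say, routine. Your alternative route via transport along the bijections with ${\tt 2Lei_{\infty}}$-${\tt Alg}$ is also legitimate in the paper's logical order, since that category has already been established in Section~5; the paper itself, however, does not invoke this transport here (it reserves the comparison for the final 2-equivalence in Corollary~2).
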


\subsection{2-morphisms and their compositions}

The definition of a 2-morphism is canonical:

\begin{defi}
Let $(F,\mathbf{F}),(G,\mathbf{G})$ be Leibniz 2-algebra morphisms from $L$ to $L'$. A \emph{Leibniz 2-algebra 2-morphism} $\boldsymbol\theta$ from $F$ to $G$ is a linear natural transformation $\boldsymbol{\theta}: F\Rightarrow G$, such that, for any $x,y\in L_0,$ the following diagram commutes
\begin{equation}\label{diagram_for_2_morphism}
\xymatrix@C=2pc@R=2pc{
[Fx,Fy]\ar@{->}[rr]^-{\mathbf{F}_{x,y}}\ar@{->}[dd]^-{[\boldsymbol{\theta}_x,\boldsymbol{\theta}_y]}&&F[x,y]\ar@{->}[dd]^-{\boldsymbol{\theta}_{[x,y]}}\\
\\
[Gx,Gy]\ar@{->}[rr]^-{\mathbf{G}_{x,y}}&&G[x,y]\\
}
\end{equation}
\end{defi}

\begin{theo}
There is a 1:1 correspondence between Leibniz 2-algebra 2-morphisms and 2-term Leibniz $\infty$-homotopies.
\end{theo}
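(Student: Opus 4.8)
The strategy is to leverage the two preceding $1$:$1$ correspondences --- Leibniz $2$-algebras $\leftrightarrow$ $2$-term Leibniz infinity algebras, and Leibniz $2$-algebra morphisms $\leftrightarrow$ $2$-term Leibniz infinity algebra morphisms (the explicit dictionary being that of Propositions \ref{2term_Loday_in_algebra} and \ref{2term_Loday_morphism}) --- and to upgrade them one categorical level. Fixing $2$-algebra morphisms $F\leftrightarrow f=(f_1,f_2)$ and $G\leftrightarrow g=(g_1,g_2)$, it suffices to match the $2$-morphisms $\boldsymbol\theta:F\Rightarrow G$ with the $2$-term $\infty$-homotopies $\theta_1:f\Rightarrow g$ of Definition \ref{HomTheo}. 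Recall that, under the dictionary, a linear category $L$ becomes the complex $V=V_0\oplus V_1$ with $V_0=L_0$, $V_1=\{\phi\in L_1:\,s\phi=0\}$, $l_1=t|_{V_1}$, each morphism $\phi$ of $L$ being encoded by the pair (source $x\in V_0$, arrow part $\phi-\mathrm{id}_x\in V_1$), with target $x+l_1(\phi-\mathrm{id}_x)$ and with composition given by addition of arrow parts; the bracket functor corresponds to $(l_2,l_3)$ and a morphism $(F,\mathbf F)$ to $(f_1,f_2)$.

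First I would set up the assignment $\boldsymbol\theta\mapsto\theta_1$. Given a $2$-morphism, i.e.\ a linear natural transformation with components $\boldsymbol\theta_x:Fx\to Gx$ ($x\in L_0$) making the hexagon (\ref{diagram_for_2_morphism}) commute, define $\theta_1:V\to W$ of degree $1$ by letting $\theta_1x$ ($x\in V_0$) be the arrow part of $\boldsymbol\theta_x$, which lies in $W_1$, and $\theta_1|_{V_1}=0$ (forced by $W_2=0$); linearity of $\boldsymbol\theta$ gives linearity of $\theta_1$. Then I would read off the three defining conditions in turn. The statement that $\boldsymbol\theta_x$ is a morphism \emph{from $Fx$ to $Gx$} amounts to $Gx-Fx=m_1\theta_1x$, that is, $g_1x-f_1x=m_1\theta_1x$, which is condition (a). Naturality of $\boldsymbol\theta$ tested against the morphism of $L$ coded by $h\in V_1$ --- every morphism of $L$ has this shape, and linearity lets us restrict to them --- reads $\boldsymbol\theta_{x+l_1h}\circ F(h)=G(h)\circ\boldsymbol\theta_x$; expanding arrow parts via the composition rule of $L$ and using relation (a) of Proposition \ref{2term_Loday_morphism}, it collapses to $g_1h-f_1h=\theta_1l_1h$, which is condition (b). So (a) together with (b) is exactly ``$\boldsymbol\theta$ is a well-defined linear natural transformation $F\Rightarrow G$''.

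The remaining and most technical point is the coherence hexagon (\ref{diagram_for_2_morphism}). Evaluating the bracket functor on the $1$-morphisms $\boldsymbol\theta_x,\boldsymbol\theta_y$ yields $[\boldsymbol\theta_x,\boldsymbol\theta_y]:[Fx,Fy]\to[Gx,Gy]$; using bilinearity of the functor, the interchange law, and the correspondence of the bracket with $m_2$, its arrow part works out to $m_2(f_1x,\theta_1y)+m_2(\theta_1x,g_1y)$ --- the asymmetric endpoints $f_1x$ in the left slot and $g_1y$ in the right slot being the residue of the order in which whiskering is performed, and the would-be second-order term $m_2(\theta_1x,\theta_1y)$ vanishing for degree reasons. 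Composing arrow parts along the two sides of (\ref{diagram_for_2_morphism}) --- one obtains $g_2(x,y)+m_2(f_1x,\theta_1y)+m_2(\theta_1x,g_1y)$ for $\mathbf G_{x,y}\circ[\boldsymbol\theta_x,\boldsymbol\theta_y]$ and $f_2(x,y)+\theta_1l_2(x,y)$ for $\boldsymbol\theta_{[x,y]}\circ\mathbf F_{x,y}$ --- commutativity of the hexagon becomes $g_2(x,y)-f_2(x,y)=\theta_1l_2(x,y)-m_2(f_1x,\theta_1y)-m_2(\theta_1x,g_1y)$, which is precisely condition (c).

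Conversely, I would define the inverse map by sending $\theta_1$ to the natural transformation whose component $\boldsymbol\theta_x$ is the morphism of $L'$ with source $Fx$ and arrow part $\theta_1x$; conditions (a)--(c) are then exactly what is needed to check, respectively, that $\boldsymbol\theta_x$ lands in $\mathrm{Hom}(Fx,Gx)$, that $\boldsymbol\theta$ is natural, and that (\ref{diagram_for_2_morphism}) commutes. The two assignments are visibly mutually inverse, and compatibility with source and target is immediate, which yields the asserted bijection. The only genuine obstacle is the hexagon computation of the third paragraph: one must pin down the action of the bracket functor on $1$-morphisms and keep the Koszul/sign conventions aligned with those used for $(l_i)$ and $(m_i)$ throughout Sections 2 and 5, so that (\ref{diagram_for_2_morphism}) reduces to (c) with the precise insertion points $f_1x$ and $g_1y$; this is the Leibniz counterpart of the analogous verification carried out for the Lie case in \cite{BC04} and \cite{SL10}.
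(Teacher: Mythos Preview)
The paper states this theorem without proof; it is presented as the $2$-morphism level of the dictionary whose object and $1$-morphism levels (Propositions in Section~6.1) are likewise stated with references to \cite{BC04} and \cite{SL10} rather than proved. So there is no ``paper's own proof'' to compare against.

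That said, your argument is the correct and expected one: it is precisely the Leibniz analogue of the Baez--Crans verification in \cite{BC04}. The identification of the arrow part of $\boldsymbol\theta_x$ with $\theta_1x$, the derivation of (a) from source/target, of (b) from naturality, and of (c) from the hexagon~(\ref{diagram_for_2_morphism}) are all accurate. In particular your computation of the arrow part of $[\boldsymbol\theta_x,\boldsymbol\theta_y]$ via the interchange decomposition $[\boldsymbol\theta_x,\boldsymbol\theta_y]=[\boldsymbol\theta_x,\mathbf 1_{g_1y}]\circ[\mathbf 1_{f_1x},\boldsymbol\theta_y]$ yields exactly the asymmetric term $m_2(f_1x,\theta_1y)+m_2(\theta_1x,g_1y)$ that produces condition (c) of Definition~\ref{HomTheo} on the nose. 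One small refinement: when you write ``$\theta_1|_{V_1}=0$ (forced by $W_2=0$)'', this is correct as a definition of the degree-$1$ map $\theta_1$ on $V_1$, but condition (b) of Definition~\ref{HomTheo} is a genuine constraint (it relates $g_1h-f_1h$ to $\theta_1 l_1 h$, not to $\theta_1 h$), and you do verify it correctly from naturality.
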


Horizontal and vertical compositions of Leibniz 2-algebra 2-morphisms are those of natural transformations.

\begin{prop} There is a strict 2-category ${\tt Lei2Alg}$ of Leibniz 2-algebras.\end{prop}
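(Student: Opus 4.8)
The plan is to verify the defining axioms of a strict 2-category directly, leaning on the preceding results to import the hard algebraic bookkeeping. First I would record the data: 0-cells are Leibniz 2-algebras $(L,[-,-],\mathbf{J})$; 1-cells are morphisms $(F,\mathbf{F})$ with their composition given by composition of functors together with the whiskered composite of the bilinear natural transformations $\mathbf{F}$ (this composition was already asserted to be well-defined in the Proposition stating that \texttt{Lei2} is a category, so associativity and unitality of $1$-cell composition, plus the identity $1$-cells $(\mathrm{id}_L,\mathbf{1})$, come for free); 2-cells are Leibniz 2-algebra 2-morphisms $\boldsymbol\theta:F\Rightarrow G$, i.e. linear natural transformations making diagram \eqref{diagram_for_2_morphism} commute. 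What remains is to check that the vertical and horizontal compositions of 2-cells, taken to be those of natural transformations, (i) land again among \emph{Leibniz 2-algebra} 2-morphisms, i.e. preserve the coherence square \eqref{diagram_for_2_morphism}, and (ii) satisfy the strict 2-category axioms: vertical composition is associative and unital, horizontal composition is associative, horizontal composition with identities is strictly unital, and the interchange law holds.

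The key steps, in order, are as follows. Step 1: show that for a fixed pair of parallel 1-cells $F,G:L\to L'$, the vertical composite $\boldsymbol\theta'\circ\boldsymbol\theta$ of two coherent 2-morphisms $\boldsymbol\theta:F\Rightarrow G$, $\boldsymbol\theta':G\Rightarrow H$ again satisfies \eqref{diagram_for_2_morphism}; this is a routine pasting of two commuting squares along their common edge $\mathbf{G}_{x,y}$, and the identity 2-morphism $\mathbf{1}_F$ (the identity natural transformation, whose square is trivially commutative) is a strict unit. Step 2: translate everything through the 1:1 correspondence between Leibniz 2-algebra 2-morphisms and 2-term Leibniz $\infty$-homotopies (the Theorem just above), under which a coherent $\boldsymbol\theta$ corresponds to the homotopy parameter $\theta_1$ of Definition \ref{HomTheo}. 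Under this dictionary, vertical composition of natural transformations should be matched with the vertical composition $\tau_1\circ_1\theta_1=\tau_1+\theta_1$ of Theorem \ref{KomComp2}, and horizontal composition of natural transformations with the horizontal composition \eqref{HorzComp}. Once this matching is established, associativity of vertical composition is immediate from associativity of addition, associativity of horizontal composition from the strict 2-category structure on \texttt{2Lei$_\infty$-Alg} (the Proposition asserting \texttt{2Lei$_\infty$-Alg} is a strict 2-category), and the interchange law likewise transports across the equivalence. Step 3: conclude that \texttt{Lei2Alg} is a strict 2-category, and moreover that the correspondences of the preceding three Propositions and the Theorem assemble into a 2-equivalence \texttt{Lei2Alg}$\simeq$\texttt{2Lei$_\infty$-Alg}, as announced in the introduction.

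The main obstacle I expect is Step 2, specifically verifying that the \emph{horizontal} composition of the natural transformations $\mathbf{F}$, $\boldsymbol\theta$ (whiskering, and the Godement product) corresponds \emph{on the nose} to formula \eqref{HorzComp}, $\tau_1\circ_0\theta_1 = g'_1\theta_1+\tau_1 f_1 = f'_1\theta_1+\tau_1 g_1$, including that the two expressions agree — which on the categorical side is exactly the commutativity of the naturality/interchange square for the Godement product and on the $\infty$-side was justified by the chain-homotopy relations \eqref{homotopy_a}–\eqref{homotopy_b}. Getting the functors $F\leftrightarrow f_1$, $\mathbf{F}\leftrightarrow f_2$ dictionary to interact correctly with whiskering requires unwinding how the 1:1 correspondence of the earlier Propositions sends composition of Leibniz 2-algebra morphisms to composition of 2-term $\infty$-morphisms; this is bookkeeping rather than a conceptual difficulty, but it is where sign conventions and the placement of $\mathbf{F}_{x,y}$ versus $f_2(x,y)$ must be handled with care. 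Everything else — associativity and unitality of vertical composition, strict unitality of horizontal composition by the zero 2-morphism, and the interchange law — then follows formally, either from the additive structure on the set ${\cal T}(f,g)$ of 2-term $\infty$-homotopies or by transport from \texttt{2Lei$_\infty$-Alg}.
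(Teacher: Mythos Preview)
Your proposal is correct, but it takes a more circuitous route than the paper. The paper's argument for this Proposition is essentially a one-liner: horizontal and vertical compositions of Leibniz 2-algebra 2-morphisms are \emph{defined} to be those of the underlying linear natural transformations, so the strict 2-category axioms (associativity of both compositions, unitality, interchange) are inherited immediately from the strict 2-category of linear categories, linear functors, and linear natural transformations. The only genuine content is your point (i), namely that the composites of 2-morphisms $\boldsymbol\theta$ satisfying \eqref{diagram_for_2_morphism} again satisfy \eqref{diagram_for_2_morphism}; this is the routine pasting argument you describe in Step~1 (and its horizontal analogue). Your Step~2, transporting associativity and interchange from ${\tt 2Lei_{\infty}}$-${\tt Alg}$ via the 1:1 correspondence, is unnecessary for the Proposition itself: it would amount to proving that natural transformations compose strictly by first showing they correspond to things that add. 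What you outline in Step~2 is, however, exactly what is needed for the subsequent Corollary (the 2-equivalence ${\tt 2Lei_{\infty}}$-${\tt Alg}\simeq{\tt Lei2Alg}$), so the work is not wasted---it is just placed one statement too early. In short: for the Proposition, argue closure under composition by pasting, and invoke the ambient 2-category of linear categories for the axioms; reserve the matching of compositions across the dictionary for the Corollary.
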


\begin{cor} The 2-categories ${\tt 2Lei_{\infty}}$-${\tt Alg}$ and ${\tt Lei2Alg}$ are 2-equivalent.\end{cor}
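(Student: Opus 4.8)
The plan is to assemble the three bijections established in this section --- Leibniz $2$-algebras $\leftrightarrow$ $2$-term Leibniz infinity algebras, their morphisms $\leftrightarrow$ $2$-term Leibniz infinity morphisms, and Leibniz $2$-algebra $2$-morphisms $\leftrightarrow$ $2$-term Leibniz infinity homotopies --- into a single strict $2$-functor $\Phi:{\tt Lei2Alg}\to{\tt 2Lei_{\infty}}$-${\tt Alg}$, and then to check that $\Phi$ is bijective on all cells and compatible with all compositions and units, so that it is an isomorphism of strict $2$-categories and, a fortiori, a $2$-equivalence.

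First I would fix $\Phi$ layer by layer, the object and $1$-morphism layers being the two $1$-to-$1$ correspondences proved above and the $2$-morphism layer being the bijection of the Theorem just stated. It is worth recalling what the latter bijection does, since the composition checks depend on it: under the Baez--Crans dictionary the complex underlying a Leibniz $2$-algebra is $V_1\xrightarrow{l_1}V_0$, a linear functor is a chain map $(f_1,f_2)$ (with $f_2$ the $l_2$-defect term), and a linear natural transformation $\boldsymbol\theta:F\Rightarrow G$ is precisely a degree $1$ map $\theta_1:V_0\to W_1$ with $m_1\theta_1x=g_1x-f_1x$ and $\theta_1 l_1h=g_1h-f_1h$ --- i.e. Relations~(\ref{homotopy_a}), (\ref{homotopy_b}) of Definition~\ref{HomTheo} --- while the commuting square (\ref{diagram_for_2_morphism}) unwinds exactly to Relation~(\ref{homotopy_c}). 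Thus $\Phi$ is a bijection on objects, on $1$-cells and on $2$-cells; all that remains is compatibility with the structure maps of the two $2$-categories.

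Next I would verify that compatibility. Units are immediate: the identity functor and the identity natural transformation correspond to the identity infinity morphism and to the zero homotopy, the latter being exactly the unit recorded after Equation~(\ref{HorzComp}). For composition of $1$-morphisms --- composition of functors together with whiskering of the structure transformations through diagram~(\ref{diagram_for_morphism}) --- one matches the composite natural transformation $G(\mathbf{F})$ followed by $\mathbf{G}$ against the composition law $(g\circ f)_1=g_1f_1$, $(g\circ f)_2=g_1f_2+g_2(f_1\otimes f_1)$ for infinity morphisms; this is the computation already carried out (in the Lie case) in \cite{BC04} and (in the Leibniz case) in \cite{SL10}, and it simultaneously settles the equivalence of the underlying $1$-categories. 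For vertical composition of $2$-morphisms, linear natural transformations between chain maps into an additive target compose by adding the underlying chain homotopies, which is precisely Theorem~\ref{KomComp2}, $\zt_1\circ_1\zy_1=\zt_1+\zy_1$, read through the $2$-morphism bijection. For horizontal composition of $2$-morphisms, the Godement product, written component-wise and translated, yields exactly $g'_1\zy_1+\zt_1f_1=f'_1\zy_1+\zt_1g_1$, i.e. Equation~(\ref{HorzComp}). As both $2$-categories are strict, the interchange law holds automatically on each side, so nothing more need be checked.

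Being bijective on all cells and preserving every composition and unit, $\Phi$ is then an isomorphism of $2$-categories, and in particular the $2$-categories ${\tt 2Lei_{\infty}}$-${\tt Alg}$ and ${\tt Lei2Alg}$ are $2$-equivalent. I expect the main obstacle to be purely computational: making the $2$-morphism bijection explicit enough --- including the precise placement of $f_1$ versus $g_1$ inherited from the $\zvf$-derivation structure behind $\mathcal{L}_2$ --- to see \emph{on the nose} that categorical whiskering reproduces the asymmetric formula (\ref{HorzComp}), and tracking the signs and the $f_1\otimes f_1$ term so that composition of infinity morphisms of $2$-term algebras really does coincide with composition-plus-whiskering of the associated pairs $(F,\mathbf{F})$.
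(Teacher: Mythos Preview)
Your proposal is correct and follows exactly the route the paper intends: the corollary is stated without proof, as an immediate consequence of the three 1:1 correspondences (Propositions on objects and morphisms, and the Theorem on 2-morphisms) together with the composition formulae (\ref{VertComp}) and (\ref{HorzComp}) established earlier. Your write-up simply makes explicit the compatibility checks that the paper leaves implicit by presenting the result as a corollary.
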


\end{document}